\numberwithin{equation}{subsection}
\newtheorem*{theorem-non}{Theorem}
\newtheorem*{theorem-nonf}{Th\'eor\`eme}
\newtheorem*{corollary-non}{Corollary}
\newtheorem*{corollary-nonf}{Corollaire}
\newtheorem{lmm}[subsubsection]{Lemma}
\newtheorem{thm}[subsubsection]{Theorem}
\newtheorem{prop}[subsubsection]{Proposition}
\newtheorem{cor}[subsubsection]{Corollary}
\theoremstyle{definition}
\newtheorem{defn}[subsubsection]{Definition}
\newtheorem{context}[subsubsection]{Context}
\newtheorem{ex}[subsubsection]{Example}
\newtheorem{rmk}[subsubsection]{Remark}
\newtheorem*{rmk-non}{Remark}
\newtheorem{notation}[subsubsection]{Notation}
\newtheorem{construction}[subsubsection]{Construction}
\newcommand{\ital}[1]{\textit{#1}}
\newcommand{\hm}[0]{\textup{Hom}}
\newcommand{\oo}{\infty}
\newcommand{\Z}[0]{\mathbb{Z}}
\newcommand{\N}[0]{\mathbb{N}}
\newcommand{\C}[0]{\mathbb{C}}
\newcommand{\Q}[0]{\mathbb{Q}}
\newcommand{\V}[0]{\mathbb{V}}
\newcommand{\aff}[2]{\mathbb{A}^{#1}_{#2}}
\newcommand{\proj}[2]{\mathbb{P}^{#1}_{#2}}
\newcommand{\sch}[0]{\textup{\textbf{Sch}}_{S}}
\newcommand{\Cc}[0]{\mathcal{C}}
\newcommand{\sm}[0]{\textup{\textbf{Sm}}_S}
\newcommand{\ag}[1]{\mathbb{A}^1_{#1}/\mathbb{G}_{m,#1}}
\newcommand{\gm}[1]{\mathbb{G}_{m,#1}}
\newcommand{\bgm}[1]{\textup{B}\mathbb{G}_{m,#1}}
\newcommand{\Ec}[0]{\mathcal{E}}
\newcommand{\Lc}[0]{\mathcal{L}}
\newcommand{\Lb}[0]{\mathbb{L}}
\newcommand{\Mcor}[0]{\mathcal{M}}
\newcommand{\Oc}[0]{\mathcal{O}}
\newcommand{\Rc}[0]{\mathcal{R}}
\newcommand{\Fc}[0]{\mathcal{F}}
\newcommand{\Map}[0]{\textup{Map}}
\newcommand{\rhom}[0]{\mathbb{R}\textup{Hom}}
\newcommand{\bu}[0]{\textup{B}\mathbb{U}}
\newcommand{\Ql}[1]{\mathbb{Q}_{\ell#1}}
\newcommand{\Rl}[0]{\mathcal{R}^{\ell}}
\newcommand{\Rlv}[0]{\mathcal{R}^{\ell,\vee}}
\newcommand{\rperf}[0]{\mathcal{R}_{\textup{\textbf{Perf}}}}
\newcommand{\hk}[0]{\textup{HK}}
\newcommand{\Mv}[0]{\mathcal{M}^{\vee}}
\newcommand{\HQ}[0]{\textup{H}\mathbb{Q}}
\newcommand{\MB}[1]{\textup{M}\mathbb{B}_{#1}}
\newcommand{\shvl}[0]{\textup{\textbf{Shv}}_{\mathbb{Q}_{\ell}}}
\newcommand{\shv}[0]{\textup{\textbf{Shv}}}
\newcommand{\md}[0]{\textup{\textbf{Mod}}}
\newcommand{\fclgm}[0]{\textup{LG}}
\newcommand{\lgm}[1]{\textup{LG}_{S}(#1)}
\newcommand{\tlgm}[1]{\textup{LG}_{(#1,\Lc_{#1})}}
\newcommand{\tlgmproj}[1]{\textup{LG}_{(\mathbb{P}(#1),\mathcal{O}(1))}}
\newcommand{\tlgmr}[1]{\textup{LG}_{(#1,\Ec_{#1})}}
\newcommand{\dgcat}{\textup{dgCat}_S}
\newcommand{\dgcatlf}{\textup{dgCat}^{\textup{lf}}_S}
\newcommand{\dgcatdk}{\textup{\textbf{dgCat}}_S}
\newcommand{\dgcatm}{\textup{\textbf{dgCat}}^{\textup{idm}}_S}
\newcommand{\dgcatlfo}{\textup{dgCat}^{\textup{lf},\otimes}_S}
\newcommand{\dgcatdko}{\textup{\textbf{dgCat}}^{\otimes}_S}
\newcommand{\dgcatmo}{\textup{\textbf{dgCat}}^{\textup{idm},\otimes}_S}
\newcommand{\dgcatft}{\textup{\textbf{dgCat}}_S^{\textup{ft}}}
\newcommand{\cohb}[1]{\textup{\textbf{Coh}}^{b}(#1)}
\newcommand{\perf}[1]{\textup{\textbf{Perf}}(#1)}
\newcommand{\qcoh}[1]{\textup{\textbf{QCoh}}(#1)}
\newcommand{\cohm}[1]{\textup{\textbf{Coh}}^{-}(#1)}
\newcommand{\cohbp}[2]{\textup{\textbf{Coh}}^b(#1)_{\textup{\textbf{Perf}}(#2)}}
\newcommand{\cohs}[1]{\textup{Coh}^{s}(#1)}
\newcommand{\sing}[1]{\textup{\textbf{Sing}}(#1)}
\newcommand{\sring}{\textup{\textbf{sRing}}}
\newcommand{\dscaff}{\textup{\textbf{dSch}}^{\textup{aff}}}
\newcommand{\dsc}{\textup{\textbf{dSch}}}
\newcommand{\uH}[1]{\textup{\textbf{H}}_{#1}}
\newcommand{\sh}[0]{\textup{\textbf{SH}}_S}
\newcommand{\shm}[0]{\textup{\textbf{SH}}_S^{\otimes}}
\newcommand{\shnc}[0]{\textup{\textbf{SH}}^{\textup{nc}}_S}
\newcommand{\shncm}[0]{\textup{\textbf{SH}}^{\textup{nc}, \otimes}_S}
\newcommand{\shncmop}[0]{\textup{\textbf{SH}}^{\textup{nc,op}, \otimes}_S}
\newcommand{\nc}[1]{\mathcal{N}c_{#1}}
\title{\parbox{\linewidth}{\centering Motivic and \texorpdfstring{$\ell$}{l}-adic realizations of the category of singularities of the zero locus of a global section of a vector bundle}}
\author{Massimo Pippi}
\date{}
\begin{document}
\maketitle
\begin{abstract}
We study the motivic and $\ell$-adic realizations of the dg category of singularities of the zero locus of a global section of a line bundle over a regular scheme. We will then use the formula obtained in this way together with a theorem due to D.~Orlov and J.~Burke - M.~Walker to give a formula for the $\ell$-adic realization of the dg category of singularities of the zero locus of a global section of a vector bundle. In particular, we obtain a formula for the $\ell$-adic realization of the dg category of singularities of the special fiber of a scheme over a regular local ring of dimension $n$. 
\end{abstract}
\textbf{2020 Mathematics Subject Classification:} 14F42, 19E08, 32S30,
14B05, 14F08.\\
\textbf{Keywords:} Dg categories of singularities, vanishing cycles.

\tableofcontents

\section{Introduction}

The connection between categories of singularities and vanishing cycles is well known, thanks to works of T.~Dyckerhoff (\cite{dy11}), A.~Preygel (\cite{pr11}), A.~Efimov (\cite{efi18}) and many others. Recently, an instance of this fact has been studied in \cite{brtv}. 

We start by quickly reviewing the main theorem of \cite{brtv}, that served both as a model and as a motivation for the investigations presented later. The main purpose of the above mentioned paper is to identify a classical object of singularity theory, namely the $\ell$-adic sheaf of (inertia invariant) vanishing cycles, with the $\ell$-adic cohomology of a non-commutative space (that is defined in \textit{loc. cit.}), the dg category of singularities of the special fiber. The main result of A.~Blanc, M.~Robalo, B.~To\"en and G.~Vezzosi's paper (\cite[Theorem 4.39]{brtv}) reads as follows:

Let $p:X\rightarrow S$ be a proper, flat, regular scheme over an excellent strictly henselian trait $S$. Let  $i_{\sigma}:\sigma \hookrightarrow S$ be the embedding of the closed point in $S$ and let $p_{\sigma}:X_{\sigma}\rightarrow \sigma$ the pullback of $p$ along $i_{\sigma}$. Fix a prime number $\ell$ different from the characteristic of $\sigma$.
Let $\Phi_p(\Ql{,X}(\beta))$ be the $\ell$-adic sheaf of vanishing cycles associated to $\Ql{,X}(\beta)= \bigoplus_{n\in \Z}\Ql{,X}(n)[2n]$. Denote by $I$ the inertia group (as $S$ is strictly henselian, it coincides with the absolute Galois group of the open point in $S$) and by $(-)^{\textup{h}I}$ the (homotopy) fixed points $\oo$-functor. In \cite{brtv}, the authors define the $\ell$-adic realization of dg categories, denoted by $\Rlv_S$. It is an $\oo$-functor that associates an $\ell$-adic complex to a dg category.
For every (derived) scheme $Z$, let $\sing{Z}$ be the dg category of singularities of $Z$, that is the dg quotient\footnote{notice that if $Z$ is derived, we need to assume that $\Oc_Z \in \cohb{Z}$.}
\begin{equation*}
    \sing{Z}:=\cohb{Z}/\perf{Z},
\end{equation*}
where $\cohb{Z}$ denotes the dg-category of complexes of quasi coherent $\Oc_Z$-modules with coherent total cohomology and $\perf{Z}$ that of perfect complexes of $\Oc_Z$-modules.

A.~Blanc, M.~Robalo, B.~To\"en and G.~Vezzosi show that there is an equivalence of $i_{\sigma}^*\Rlv_S(\sing{S,0})\simeq \Ql{,\sigma}(\beta)\otimes_{\Ql{,\sigma}} \Ql{,\sigma}^{\textup{h}I}$-modules
\begin{equation*}
    i_{\sigma}^*\Rlv_S(\sing{X_{\sigma}})\simeq p_{\sigma *}\Phi_{p}(\Ql{,X}(\beta))^{\textup{h}I}[-1].
\end{equation*}

\begin{rmk-non}
This theorem is of major importance in B.~To\"en - G.~Vezzosi's approach to the Bloch's conductor conjecture (\cite{bl85}). For more details, see \cite{tv17}, \cite{tv19a}
,\cite{tv19b}.
\end{rmk-non}
The content of this theorem is summarized in the following mind map:
\begin{equation*}
    \begindc{\commdiag}[14]
    \obj(0,80)[1]{$p:X\rightarrow S \text{ (initial data)}$}
    \obj(-70,60)[2]{$(X,\pi \circ p:X\rightarrow \aff{1}{S})\in \lgm{1}$}
    \obj(-70,50)[3]{$(\text{for } \pi \text{ a uniformizer of } S)$}
    \obj(-70,30)[4]{$\sing{X,\pi \circ p}\simeq \sing{X_{\sigma}}$}
    \obj(-70,20)[5]{$\text{ (as } X \text{ is regular)}$}
    \obj(0,0)[6]{$i_{\sigma}^*\Rlv_S(\sing{X_{\sigma}})\simeq p_*\Phi_{p}(\Ql{,X}(\beta))^{\textup{h}I}[-1]$}
    \obj(70,60)[7]{$\Phi_p(\Ql{,X}(\beta))$}
    \obj(70,50)[8]{$\text{(vanishing cycles of } X)$}
    \obj(70,30)[9]{$\Phi_p(\Ql{,X}(\beta))^{\textup{h}I}$}
    \obj(70,20)[10]{$\text{(inertia invariant vanishing cycles)}$}
    \mor(-20,75)(-50,65){$$}
    \mor{3}{4}{$$}
    \mor(-50,15)(-20,5){$$}
    \mor(20,75)(50,65){$$}
    \mor{8}{9}{$$}
    \mor(50,15)(20,5){$$}
    \obj(-70,100)[]{$\text{NON COMMUTATIVE SIDE}$}
    \obj(70,100)[]{$\text{VANISHING CYCLES SIDE}$}
    \enddc
\end{equation*}
We can ask whether the above diagram makes sense in more general situations. 
%Indeed, one can generalize the picture in two ways
For example, one can start with the datum of a proper, flat and regular scheme over an excellent local regular ring of dimension $n$. One recovers the case treated in \cite{brtv} when $n=1$.

It is immediate to observe that the left hand side of the diagram makes sense without any change: assume that we are given a proper, flat morphism $p:X\rightarrow S$, with $X$ regular and $S$ local, regular of dimension $n$. Let $\underline{\pi}=(\pi_1,\dots,\pi_n)$ be a collection of generators of the closed point of $S$. Then we can consider the morphism $\underline{\pi}\circ p:X\rightarrow \aff{n}{S}$ and its fiber $X_0$ along the origin $S\rightarrow \aff{n}{S}$. It makes perfectly sense to consider $\Rlv_S(\sing{X_0})$.

It comes up that this generalization is related to the following one: one can consider pairs $(X,s_X)$ where $X$ is a regular scheme over $S$ and $s_X:X\rightarrow \V(\Lc_S)$ is a morphism towards the total space of a line bundle $\Lc_S$ over $S$. One recovers the situation pictured above when $\Lc_S=\Oc_S$ is the trivial line bundle. In this situation, we want to compute $\Rlv_S(\sing{X_0})$, where $X_0$ is the fiber of $s_X:X\rightarrow \V(\Lc_S)$ along the zero section $S\rightarrow \V(\Lc_S)$.

One can view the former generalization as a particular case of the latter thanks to a theorem of D.~Orlov (\cite{orl06}) and J.~Burke-M.~Walker (\cite{bw15}), which tells us that the dg category of singularities of $(X,\underline{\pi}\circ p)$ is equivalent to the dg category of singularities of $(\proj{n-1}{X},(\pi_1\circ p)\cdot T_1+\dots + (\pi_n\circ p)\cdot T_n \in \Oc(1)(\proj{n-1}{X}))$.

Thus, we only need to find the appropriate generalization of (inertia invariant) vanishing cycles. The first thing we can think of are vanishing cycles over general bases, developed by G.~Laumon (\cite{lau82}) following ideas of P.~Deligne and further investigated by many others, including O.~Gabber, L.Illusie, F.~Orgogozo (see \cite{ilo14}, \cite{or06}). However, it seems that this is not the right point of view for our purposes. Instead, we will pursue the following analogy:
\begin{equation*}
    \begindc{\commdiag}[18]
    \obj(-60,70)[]{$\text{TOPOLOGY}$}
    \obj(0,70)[]{$\text{VANISHING CYCLES}$}
    \obj(60,70)[]{$\text{OUR SETTING}$}
    \obj(-60,50)[]{$\mathbb{D}\text{ unital}$}
    \obj(-60,40)[]{$\text{open disk}$}
    \obj(0,50)[]{$S \text{ strictly}$}
    \obj(0,40)[]{$ \text{henselian trait}$}
    \obj(60,50)[]{$\V(\Lc)\text{ total space}$}
    \obj(60,40)[]{$\text{of a line bundle } \Lc \text{ over }$}
    \obj(60,30)[]{$\text{a base scheme } S$}
    \obj(-60,10)[]{$0\hookrightarrow \mathbb{D}$}    \obj(-60,0)[]{$\text{origin}$}
    \obj(0,10)[]{$\sigma \hookrightarrow S$}
    \obj(0,0)[]{$\text{special point}$}
    \obj(60,10)[]{$S\hookrightarrow \V(\Lc)$}
    \obj(60,0)[]{$\text{zero section}$}
    \obj(-60,-20)[]{$\mathbb{D}^*=\mathbb{D}-\{0\}\hookrightarrow \mathbb{D}$}    \obj(-60,-30)[]{$\text{punctured disk}$}
    \obj(0,-20)[]{$\eta \hookrightarrow S$}
    \obj(0,-30)[]{$\text{generic point}$}
    \obj(60,-20)[]{$\mathcal{U}=\V(\Lc)-S \hookrightarrow \V(\Lc)$}
    \obj(60,-30)[]{$\text{open complementary}$}
    \obj(-60,-50)[]{$\widetilde{\mathbb{D}^*} \rightarrow \mathbb{D}^*$}    \obj(-60,-60)[]{$\text{universal cover}$}
     \obj(-60,-70)[]{$\text{of the punctured disk}$}
    \obj(0,-50)[]{$\bar{\eta}\rightarrow \eta$}
    \obj(0,-60)[]{$\text{separable closure}$}
    \obj(0,-70)[]{$\text{of the generic point}$}
    \obj(60,-60)[]{$?$}
    \enddc
\end{equation*}
As we will only need to define the analogous of inertia-invariant vanishing cycles, we will not face the problem of filling the empty spot in the mental map above. Nevertheless, we will come back to this matter at the end of the article, presenting a strategy to complete the picture.

We will define an appropriate generalization of $\Phi_p(\Ql{,X}(\beta))^{\textup{h}I}$ (see Definition \ref{definition monodromy-invarian t vanishing cycles}) and prove a generalization of the formula stated in \cite[Theorem 4.39]{brtv}. Our main theorem will then look as follows:
\begin{theorem-non}{\textup{\ref{main theorem}}}
Let $X$ be a regular scheme and let $s_X$ be a regular global section of a line bundle $\Lc_X$. Denote $X_0$ the zero locus of $s_X$. Then
\begin{equation*}
    \Rlv_{X_0}(\sing{X_0})\simeq \Phi^{\textup{mi}}_{(X,s_X)}(\Ql{}(\beta))[-1].
\end{equation*}
Here $\Phi^{\textup{mi}}_{(X,s_X)}(\Ql{}(\beta))$ is what we call the monodromy-invariant vanishing cycles $\ell$-adic sheaf (see Definition \ref{mododromy-invariant vanishing cycles def}). It coincides with inertia invariant vanishing cycles when we put ourselves in the situation considered in \cite{brtv}.
\end{theorem-non}

Using this theorem combined with the above mentioned result of D.~Orlov and J.~Burke - M.~Walker (Theorem \ref{reduction of codimension theorem}), we will deduce the following formula for the situation in which one starts with a regular scheme $X$ and a regular section $s_X$ of a vector bundle.
\begin{theorem-non}{\textup{\ref{formula for (X,s)}}}
Let $f:X\rightarrow S$ be a flat morphism, $\Ec_S$ be a vector bundle of rank $r$ over $S$ and let $s_X$ be a global section of $\Ec_X=f^*\Ec_S$. Assume that $X$ is a regular scheme and that $s_X$ is regular section. The following equivalence holds in $\md_{\Rlv_X(\sing{X,0})}(\shvl(X))$
\begin{equation*}
    \Rlv_X(\sing{X,s_X})\simeq p_*i_*\Phi^{\textup{mi}}_{(\mathbb{P}(\Ec_X),W_{s_X})}(\Ql{}(\beta))[-1],
\end{equation*}
where $W_{s_X}$ is the global section of $\Oc_{\mathbb{P}(\Ec_X)}(1)$ associated to $s_X$, $i:V(W_{s_X})\rightarrow \mathbb{P}(\Ec_X)$ is the closed embedding of the zero locus of $W_{s_X}$ in $\mathbb{P}(\Ec_X)$  and $p:\mathbb{P}(\Ec_X)\rightarrow X$ is the canonical projection.
\end{theorem-non}

In particular, the above theorem applies to the special case $\Ec_S \simeq \Oc_S^n$ and allows us to compute the $\ell$-adic realisation of the special fiber of a regular scheme over a local regular noetherian ring of dimension $n$. More precisely:

\begin{corollary-non}{\textup{\ref{answer intial question}}}
Assume that $S=Spec(A)$ is a noetherian regular local ring of dimension $n$ and let $\pi_1,\dots,\pi_n$ be generators of the maximal ideal. Let $p:X\rightarrow S=Spec(A)$ be a regular, flat $S$-scheme of finite type. Let $\underline{\pi}:S\rightarrow \aff{n}{S}$ be the closed embedding associated to $\pi_1,\dots,\pi_n$. Then $\underline{\pi}\circ p$ is a regular global section of $\Oc_X^{n}$. Then the equivalence
\begin{equation*}
    \Rlv_X(\sing{X,\underline{\pi}\circ p})\simeq q_*i_*\Phi^{\textup{mi}}_{(\proj{n-1}{X},W_{\underline{\pi}\circ p})}(\Ql{}(\beta))[-1]
\end{equation*}
holds in $\md_{\Rlv_X(\sing{X,\underline{0}})}(\shvl(X))$. 

Here $q:\proj{n-1}{X}=Proj_X(\Oc_X[T_1,\dots,T_n])\rightarrow X$ is the canonical projection and $i:V(W_{\underline{\pi}\circ p})\rightarrow \proj{n-1}{X}$ is the closed embedding determined by the equation 
\begin{equation*}
    W_{\underline{\pi}\circ p}=p^*(\pi_1)\cdot T_1+\dots + p^*(\pi_n)\cdot T_n=0.
\end{equation*}
\end{corollary-non}

We will end this article with some remarks on the following two problems:
\begin{enumerate}
    \item It seems possible to define a formalism of vanishing cycles in twisted situations, i.e. in the situation in which we have a morphism $s_X:X\rightarrow \V(\Lc_X)$ for $\Lc_X \in \textup{\textbf{Pic}}(X)$. This is all about completing the empty slot in the mind map above. One should then be able to find $\Phi^{\textup{mi}}_{(X,s_X)}(\Ql{}(\beta))$ via a procedure that corresponds to taking homotopy fixed points in the usual situation. A complete account on this formalism will appear in a forthcoming paper in collaboration with D.-C.~Cisinski.
    \item We will comment the regularity hypothesis that appears both in A.~Blanc - M.~Robalo - B.~To\"en - G.~Vezzosi's theorem and in the generalization we provide.
\end{enumerate}
\begin{rmk-non}
The main body of this article corresponds to \cite[Chapter \S3]{pi20}, while the "preliminaries" section corresponds Chapter \S1 in \textit{loc. cit}. The only difference lies in section \S \ref{the l-adic realization of the dg category of singularities of a twisted LG model of rank r}, where we have considered a more general setting in the present text.
\end{rmk-non}
\begin{rmk-non}
Even if the theorems are stated in the category $\shvl(Z)$ of $\ell$-adic sheaves, they are nevertheless true (and the proves we provide work mutatis mutandis) as $\textup{B}\mathbb{U}_{Z,\Q}$-modules (see section \ref{the l-adic realization of dg categories}).
\end{rmk-non}

\section{Preliminaries and notation}

In this preliminary section we will briefly recall some of the mathematical tools that we will need later. We will also fix the notation that we will use in the other sections.
\subsection{Some notation and convention}
\begin{itemize}
    \item Even when not explicitly stated, $S$ will always be a regular scheme of finite type over a strictly local noetherian scheme.
    \item $\sm$ denotes the category of smooth schemes of finite type over $S$.
    \item $\sch$ denotes the category of separated schemes of finite type over $S$.
    \item We will freely use the language of $\oo$-categories which has been developed in \cite{htt}, \cite{ha}. $\oo$-category will always mean $(\oo,1)$-category for us.
    \item $\mathcal{S}$ denotes the $\oo$-category of spaces.
    \item We write dg instead of "differential graded".
    \item We will use cohomological notations. In particular, the differential of a complex increases the degree.
    \item If we are given a morphism of (derived) schemes $f:X\rightarrow Y$ and an object $\Ec_Y \in \qcoh{Y}$, we will write $\Ec_X$ instead of $f^*\Ec_Y$.
\end{itemize}
\subsection{Reminders on dg categories}
\begin{rmk}
For more details on the theory of dg categories, we invite the reader to consult \cite{ke06}, \cite{to11} and/or \cite{ro14}.
\end{rmk}
Let $A$ be a commutative ring.

Let $\dgcat$ be the category of small $A$-linear dg categories together with $A$-linear dg functors. This category can be endowed with a  cofibrantly generated model category structure, where weak equivalences are DK equivalences (see \cite{tab05}). The underlying $\oo$-category of this model category coincides with the $\oo$-localization of $\dgcat$ with respect to the class of DK equivalences. We will denote this $\oo$-category by $\dgcatdk$. 

Every DK equivalence is a Morita equivalence. We can therefore endow $\dgcat$ with a second cofibrantly generated model category structure by using the theory of Bousfield localizations. In this case weak equivalences are Morita equivalences. Similarly to the previous case, the underlying $\oo$-category of this model category coincides with the $\oo$-localization of $\dgcat$ with respect to Morita equivalences. We will label this $\oo$-category by $\dgcatm$. 

Let  $\hat{\Cc}_c$ denote the dg category of perfect $\Cc^{\textup{op}}$-dg modules. Then $\dgcatm$ is equivalent to the full subcategory of $\dgcatdk$ spanned by dg categories $\Cc$ for which the Yoneda embedding $\Cc \hookrightarrow \hat{\Cc}_c$ is a DK equivalence.

To summarize, we have the following pair of composable $\oo$-localizations
\begin{equation}
\dgcat \rightarrow \dgcatdk \rightarrow \dgcatm.
\end{equation}
$\dgcatdk \rightarrow \dgcatm$ is a left adjoint to the inclusion $\dgcatm\hookrightarrow \dgcatdk$ under the identification mentioned above. At the level of objects, it is defined by the assignment $T\mapsto \hat{T}_c$.

It is possible to enhance $\dgcatdk$ and $\dgcatm$ with symmetric monoidal structures. Furthermore, if we restrict to the full subcategory $\dgcatlf \subseteq \dgcat$ of locally flat (small) dg categories, we get two composable symmetric monoidal $\oo$-functors
\begin{equation}
    \dgcatlfo \rightarrow \dgcatdko \rightarrow \dgcatmo.
\end{equation}
More details on the Morita theory of dg categories can be found in \cite{to07}.

One of the most recurrent operations that occur in this work is that of forming quotients of dg categories:
given a dg category $\Cc$ together with a full sub dg category $\Cc'$, both of them in $\dgcatm$, we can consider  the pushout $\Cc \amalg_{\Cc'}0$ in $\dgcatm$. Here $0$ denotes the final object in $\dgcatm$, i.e. the dg category with only one object whose endomorphisms are given by the zero hom-complex. We denote this pushout by $\Cc/\Cc'$ and refer to it as the dg quotient of $\Cc' \hookrightarrow \Cc$. Equivalently, The dg category $\Cc/\Cc'$ can also be obtained as the image in $\dgcatm$ of the pushout $\Cc \amalg_{\Cc'}0$ formed in $\dgcatdk$. Its homotopy category coincides with (the idempotent completion of) the Verdier quotient of $\textup{H}^0(\Cc)$ by the full subcategory $\textup{H}^0(\Cc')$ (see \cite{dri}). 

Compact objects in $\dgcatm$ are dg categories of finite type over $A$, as defined in \cite{tv07}. In particular,
\begin{equation}
    \textup{Ind}(\dgcatft)\simeq \dgcatm.
\end{equation}

Moreover, $\dgcatm$ is equivalent to the $\oo$-category of small, idempotent complete, $A$-linear stable $\oo$-categories (\cite{co13}).

When $S$ is a non affine scheme, the symmetric monoidal $\oo$-category of dg-cateogories over $S$ is defined as the limit
\begin{equation}
 \dgcatmo = \varprojlim_{Spec(A)\rightarrow S}\textup{\textbf{dgCat}}_A^{\textup{idm},\otimes}.
\end{equation}

Here there are some dg categories that we will use: let $X$ be a derived scheme (stack).
\begin{itemize}
    \item $\qcoh{X}$ will denote the dg category of quasi-coherent $\Oc_X$-modules. It can be defined as follows. If $X=Spec(B)$, then $\qcoh{X}=\md_B$, the dg category of dg modules over the dg algebra associated to $B$ via the Dold-Kan equivalence. In the general case, $\qcoh{X}=\varprojlim_{Spec(B)\rightarrow X}\md_B$ (the functoriality being that induced by base change).
    \item $\perf{X}$ will denote the full sub dg category of $\qcoh{X}$ spanned by perfect complexes. If $X=Spec(B)$, then $\perf{B}$ is the smallest subcategory of $\md_B$ which contains $B$ and that is stable under the formation of finite colimits and retracts. More generally, an object $\Ec \in \qcoh{X}$ is perfect if, for any $g:Spec(B)\rightarrow X$, the pullback $g^*\Ec \in \perf{B}$. Perfect complexes coincide with dualizable objects in $\qcoh{X}$ and, under some mild additional assumptions (that will always be verified in our examples), with compact objects (see \cite{bzfn}).
\end{itemize}
Moreover, as we assume to work in the noetherian setting, we can consider:
\begin{itemize}
    \item $\cohb{X}$ will denote the full sub dg category of $\qcoh{X}$ spanned by those cohomologically bounded complexes $\Ec$ (i.e. $\textup{H}^i(\Ec)\neq 0$ only for a finite number of indexes) such that $\textup{H}^*(\Ec)$ is a coherent $\textup{H}^0(\Oc_X)$-module.
    \item $\cohm{X}$ will denote the full dg category of $\qcoh{X}$ spanned by those cohomologically bounded above complexes $\Ec$ (i.e. $\textup{H}^i(\Ec)=0$ for $i>>0$) such that $\textup{H}^*(\Ec)$ is a coherent $\textup{H}^0(\Oc_X)$-modules. These are also known as pseudo-coherent complexes.
    \item Let $p : X\rightarrow Y$ be a proper morphism locally almost of finite type. By \cite[Cpt.4 Lemma 5.1.4]{gr17}, we have an induced $\oo$-functor $p_*:\cohb{X}\rightarrow \cohb{Y}$. We denote $\cohbp{X}{Y}$ the full subcategory of $\cohb{X}$ spanned by those objects $\Ec$ such that $p_*\Ec\in \perf{Y}$.
\end{itemize}
\subsection{Derived algebraic geometry}
Derived algebraic geometry is a broad generalization of algebraic geometry whose building blocks are simplicial commutative rings, rather then commutative rings. It is usually better behaved in the situations that are typically defined bad in the classical context, e.g. non-transversal intersections. The main idea is to develop algebraic geometry in an homotopical context: instead of saying that two elements are equal we rather say that they are homotopic, the homotopy being part of the data. In this article derived schemes appear exclusively as (homotopy) fiber products of ordinary schemes. It is necessary to allow certain schemes to be derived, as if we restrict ourselves to work with discrete (i.e. classical) schemes, some important characters do not appear (e.g. the algebra which acts on certain dg categories of (relative) singularities).

The ideas and motivations that led to derived algebraic geometry go back to J.-P.~Serre (Serre's intersection formula, \cite{se65}), P.~Deligne (algebraic geometry in a symmetric monoidal category, \cite{de90}), L.~Illusie, A.~Grothendieck, M.~Andr\'e, D.~Quillen (the cotangent complex, \cite{an74}, \cite{ill71}, \cite{qu70}, \cite{sga6}) $\dots$ however, the theory nowadays relies on solid roots thanks to the work of J.~Lurie (\cite{sag}) and B.~To\"en - G.~Vezzosi (\cite{tv05}, \cite{tv08}).
For a brief introduction to derived algebraic geometry we refer to \cite{to14}. We will denote the $\oo$-category of derived schemes by $\dsc$.

By definition, each derived scheme $X$ has an underlying scheme $t_0(X)$ (its truncation). Indeed, the assignment $X\mapsto t_0(X)$ is part of an adjunction 
\begin{equation}
  \iota: \textup{\textbf{Sch}} \rightleftarrows \dsc :  t_0 .
\end{equation}
A derived scheme is affine if its underlying scheme is so. There is an equivalence of $\oo$-categories
\begin{equation}
    \dscaff\simeq \sring^{\textup{op}} 
\end{equation}
where $\dscaff$ is the full subcategory of $\dsc$ spanned by affine derived schemes and $\sring$ is the $\oo$-category of simplicial rings. For any simplicial commutative ring $A$, we denote $Spec(A)$ the associated derived scheme.

The $\oo$-category $\dsc$ has all finite limits. In particular, it has fiber products. For example, if we consider a diagram
\begin{equation*}
    Spec(B)\rightarrow Spec(A) \leftarrow Spec(C)
\end{equation*}
of affine derived schemes, the fiber product is equivalent to $Spec(B\otimes^{\mathbb{L}}_AC)$, the spectrum of the derived tensor product. 

Consider two (underived) $S$-schemes $X,Y$ ($S$ being underived itself). Then the fiber product computed in $\dsc$ (denoted $X\times^h_SY$) might differ from the one computed in $\textup{\textbf{Sch}}$ (denoted $X\times_S Y$). However, they are related by the formula
\begin{equation}
    t_0(X\times_S^hY)\simeq X\times_SY.
\end{equation}

\subsection{\texorpdfstring{$\ell$}{l}-adic sheaves}\label{l-adic sheaves}
We shall briefly introduce the $\oo$-category of $\ell$-adic sheaves, following \cite{gl19}.

Fix a prime number $\ell$, which is invertible in each residue field of our base scheme $S$,
where $S$ is a regular scheme of finite type over a strictly local noetherian scheme\footnote{This assumption is needed in order to consider schemes of finite \'etale cohomological dimension, see \cite[Theorem 1.1.5]{cd16}.}. Let $\shv(X,\Z/\ell^d\Z)$ be the full subcategory of the $\oo$-category $\textup{Fun}(\textup{\textbf{Sch}}_{\textup{et}}^{op},\textup{\textbf{Mod}}_{\Z/\ell^d\Z})$ spanned by (hypercomplete) \'etale sheaves. Here $\textup{\textbf{Sch}}_{\textup{et}}$ denotes the category of \'etale sheaves and $\textup{\textbf{Mod}}_{\Z/\ell^d\Z}$ the $\oo$-category of $\Z/\ell^d\Z$-modules. 
The $\oo$-category $\shv(X,\Z/\ell^d\Z)$ is compactly-generated, its compact objects being constructible sheaves (see \cite[Proposition 3.38]{brtv}). In what follows, we will denote $\shv^c(X,\Z/\ell^d\Z)$ the full subcategory of $\shv(X,\Z/\ell^d\Z)$ spanned by compact objects.

The ring homomorphisms $\Z/\ell^d\Z \rightarrow \Z/\ell^{d-1}\Z $ induce a  sequence of $\oo$-functors
\begin{equation}
\shv(X,\Z/\ell^{d}\Z)\rightarrow \shv(X,\Z/\ell^{d-1}\Z)
\end{equation}
and it follows from \cite[Proposition 2.2.8.4]{gl19} that the image of a constructible sheaf is again constructible, yielding 
\begin{equation}
\shv^c(X,\Z/\ell^{d}\Z)\rightarrow \shv^c(X,\Z/\ell^{d-1}\Z).
\end{equation}
We can then consider the limit of the diagram of $\oo$-categories
\begin{equation}
\shv^c_{\ell}(X):=\varprojlim \shv^c(X,\Z/\ell^{d}\Z).
\end{equation}
This $\oo$-category can be identified with the full subcategory of $\shv(X,\Z)$ generated by $\ell$-complete constructible sheaves, i.e. by those objects $\mathcal{F} \in \shv(X,\Z)$ such that
\begin{enumerate}
\item $\mathcal{F}\simeq \varprojlim \mathcal{F}/\ell^d\mathcal{F}$,
\item for any $d\geq 1$, $\mathcal{F}/\ell^d\mathcal{F}$ is constructible. 
\end{enumerate}
We will refer to this $\oo$-category with the $\oo$-category of constructible $\ell$-adic sheaves.

The pushforward for a morphism $f : X\rightarrow Y$ of $S$-schemes of finite type induces an $\oo$-functors at the level of constructible $\ell$-adic
sheaves
\begin{equation}
f_* : \shv^c_{\ell}(X)\rightarrow \shv^c_{\ell}(Y).
\end{equation}
It admits a left adjoint
\begin{equation}
f^*:\shv^c_{\ell}(Y)\rightarrow \shv^c_{\ell}(X)
\end{equation}
that, at the level of objects, takes a constructible $\ell$-adic sheaf to the $\ell$-completion of its pullback.

We next consider the ind-completion of such categories:
\begin{equation}
\shv_{\ell}(X):= \textup{Ind}\bigl ( \shv^c_{\ell}(X) \bigr ),
\end{equation}
the $\oo$-category of $\ell$-adic sheaves. It is then a formal fact that we have a couple of adjoint functors, also called the pushforward and the pullback, defined at the level of $\ell$-adic sheaves.

Finally, we consider the localization of $\shv_{\ell}(X)$ with respect to the class of morphisms $\{ \mathcal{F}\rightarrow \mathcal{F}[\ell^{-1}] \}$, obtaining the $\oo$-category of $\mathbb{Q}_{\ell}$-adic sheaves $\shvl(X)$.

\subsection{Stable homotopy categories}
In this section we will briefly recall the constructions and main properties of $\sh$ and of $\shnc$, the stable $\oo$-category of schemes and the stable $\oo$-category of non-commutative spaces (a.k.a. dg categories).
\subsubsection{The stable homotopy category of schemes}\label{stable homotopy category of schemes}
The stable homotopy category of schemes was first introduced by F.~Morel and V.~Voevodsky in their celebrated paper \cite{mv99}. The main idea is to develop an homotopy theory for schemes, where the role of the unit interval - which is not available in the world of schemes - is played by the affine line. It was first developed using the language of model categories. We will rather use that of $\oo$-categories, following \cite{ro14} and \cite{ro15}. The two procedures are compatible, as shown in \cite{ro14} and \cite{ro15}.

Let $S=Spec(A)$ denote an affine scheme.
One can produce the unstable homotopy category of schemes as follows: one considers the $\oo$-category $\textup{Fun}(\sm^{\textup{op}},\mathcal{S})$ of presheaves (of spaces) on $\sm$. 
Its full subcategory spanned by Nisnevich sheaves, $\textup{Sh}_{Nis}(\sm)$ is an example of an $\oo$-topos (in the sense of \cite{htt}). 
Then one has to consider its hypercompletion $\textup{Sh}_{Nis}(\sm)^{hyp}$, which coincides with the localization of $\textup{Fun}(\sm^{\textup{op}},\mathcal{S})$ spanned by objects that are local with respect to Nisnevich hypercovers. 
If we further localize with respect to the projections $\{\aff{1}{X}\rightarrow X\}$, we obtain the unstable homotopy $\oo$-category of schemes, which we will denote by $\uH{S}$. 
It is also important that the canonical $\oo$-functor $\sm \rightarrow \uH{S}$ can be promoted to a symmetric monoidal $\oo$-functor with respect to the Cartesian structures.
One then considers the pointed version of $\uH{S}$, $\uH{S*}$: it comes equipped with a canonical symmetric monoidal structure $\uH{S*}^{\wedge}$ and there is a symmetric monoidal $\oo$-functor $\uH{S}^{\times}\rightarrow \uH{S*}^{\wedge}$.
The final step consists in stabilization: in classical stable homotopy theory one forces stabilization by inverting $S^1$. However, in this context, there exist two circles, the topological circle $S^1:=\Delta^1/\partial \Delta^1$ and the algebraic circle $\gm{S}$. One then stabilizes $\uH{S*}$ by inverting $S^1\wedge \gm{S}\simeq(\proj{1}{S},\oo):=cofib(S\xrightarrow{\oo}\proj{1}{S})$\footnote{clearly, this cofiber is taken in $\uH{S*}$.} - this can be done using the machinery developed in \cite[\S2.1]{ro15}. As a result, we obtain the presentable, symmetric monoidal, stable $\oo$-category $\sh^{\otimes}:=\uH{S*}^{\wedge}[(\proj{1}{S},\oo)^{-1}]$, called the stable homotopy $\oo$-category of schemes. It is moreover characterized by the following universal property (see \cite[Corollary 2.39]{ro15}): there is a symmetric monoidal $\oo$-functor $\Sigma_+^{\oo}:\sm^{\times}\rightarrow \sh^{\otimes}$ and for any presentable, symmetric monoidal pointed $\oo$-category $\mathcal{D}^{\otimes}$, the map
\begin{equation}
    \textup{Fun}^{\otimes,\textup{L}}(\sh^{\otimes},\mathcal{D}^{\otimes})\rightarrow \textup{Fun}^{\otimes}(\sm^{\times},\mathcal{D}^{\otimes})
\end{equation}
induced by $\Sigma_+^{\oo}$ is fully faithful and its image coincides with those symmetric monoidal $\oo$-functors $F:\sm^{\times}\rightarrow \mathcal{D}^{\otimes}$ that satisfy
\begin{itemize}
    \item Nisnevich descent,
    \item $\aff{1}{}$-invariance,
    \item $cofib(F(S\xrightarrow{\oo}\proj{1}{S}))$ is an invertible object in $\mathcal{D}$.
\end{itemize}
\begin{rmk}
It can be shown, using results of Ayoub (\cite{ay08i}, \cite{ay08ii}), Cisinski-D\'eglise (\cite{cd19}) and the machinery developed by Gaitsgory-Rozenblyum (\cite{gr17}) and Liu-Zheng (\cite{lz15}, \cite{lz17i}, \cite{lz17ii}), that the assignment $S\mapsto \sh$ defines a sheaf of $\oo$-categories enhanced with a Grothendieck $6$-functors formalism. See \cite{ro14} and \cite{ro15} and the appendix in \cite{brtv}.
\end{rmk}
Among the objects of $\sh$, the spectrum of homotopy invariant non-connective K-theory $\bu$\footnote{more commonly, this spectrum is denoted $KGL_S$, see \cite{ci13}. We use the notation $\bu_S$, which comes from topology, following the lead of \cite{brtv}, which is our main source of inspiration.} will play a crucial role in what follows. Recall that it is an object in $\sh$ such that, for every object $X\in \sm$,
\begin{equation}
    \Map_{\sh}(\Sigma^{\oo}_+X,\bu_S)\simeq \hk(X).
\end{equation}
Moreover, $\bu_S$ satisfies the algebraic Bott periodicity:
\begin{equation}
    \bu_S\simeq \bu_S(1)[2].
\end{equation}
\subsubsection{The stable homotopy category of non-commutative spaces}
\begin{rmk}
This section is an exposition of the ideas and results of \cite{ro14} and \cite{ro15}. However, a theory of non-commutative motives was also proposed by D.-C.~Cisinski and G.~Tabuada in \cite{ct11}, \cite{ct12}, \cite{ta08}. Their theory is dual to the one constructed by M.~Robalo. For more details about this, we refer to \cite[Appendix A]{ro15} and \cite[Remark 3.4]{brtv}.
\end{rmk}
It is possible to mimic the procedure described in the previous paragraph to construct a presentable, symmetric monoidal, stable $\oo$-category $\shnc$ defined starting with non-commutative spaces rather than schemes, a.k.a. dg categories. The role of smooth schemes is played by dg categories of finite type in this context. Moreover, there exists an analog of Nisnevich square of non-commutative spaces (see \cite{ro14} and \cite{ro15}). 
\begin{rmk}\label{convention emptyset}
By convention, $\emptyset$ is a Nisnevich covering of the zero object in $\dgcat^{\textup{ft}}$.
\end{rmk}
\begin{rmk}
The notion on Nisnevich squares for smooth non-commutative spaces is compatible with the classical one: if
\begin{equation}
    \begindc{\commdiag}[18]
    \obj(-15,10)[1]{$V\times_X U$}
    \obj(15,10)[2]{$V$}
    \obj(-15,-10)[3]{$U$}
    \obj(15,-10)[4]{$X$}
    \mor{1}{2}{$$}
    \mor{1}{3}{$$}
    \mor{2}{4}{$$}
    \mor{3}{4}{$$}
    \enddc
\end{equation}
is an elementary Nisnevich square, then its image via $\perf{\bullet}$ is a Nisnevich square of non-commutative spaces (\cite[Proposition 3.21]{ro15}).
\end{rmk}
One then considers the category of presheaves $\textup{Fun}(\nc{S}^{\textup{op}},\mathcal{S})$ and its localization with respect to the class of morphisms $\{j(U)\amalg_{j(X)}j(V)\rightarrow j(W)\}$ determined by Nisnevich squares of non-commutative spaces, where $j:\nc{S} \rightarrow \textup{Fun}(\nc{S}^{\textup{op}},\mathcal{S})$ is the Yoneda embedding. We further localize with respect to the morphisms $\{X\otimes \perf{S}\rightarrow X\otimes \perf{\aff{1}{S}}\}$\footnote{Since the Yoneda embedding is symmetric monoidal (when we consider the Day convolution product on the $\oo$-category of presheaves), then it suffices to localize with respect to the morphism $j(\perf{S})\rightarrow j(\perf{\aff{1}{S}})$.} and obtain a presentable symmetric monoidal $\oo$-category $\uH{S}^{\textup{nc},\otimes}$. Pursuing the analogy with the commutative case, we should now force the existence of a zero object in $\uH{S}^{\textup{nc}}$ and then stabilize with respect to the topological and algebraic circles. It comes out that the situation is simpler in the non-commutative case: let 
\begin{equation*}
    \psi^{\otimes}:\nc{S}^{\otimes}\rightarrow \uH{S}^{\textup{nc},\otimes}\rightarrow \shncm:= \uH{S}^{\textup{nc},\otimes}[(S^1)^{-1}]
\end{equation*} 
denote the $\oo$-functor that we obtain if we force the topological circle to be invertible. Then 
\begin{itemize}
    \item $\shnc$ is pointed due to the convention of Remark \ref{convention emptyset}.
    \item The non-commutative motive 
    \begin{equation*}
        \psi(\proj{1}{S},\oo)=cofib(\psi(\perf{S} \xrightarrow{\oo^*} \perf{\proj{1}{S}}))
    \end{equation*} 
    is invertible in $\shncm$ (\cite[Proposition 3.24]{ro15}): indeed, it is equivalent to $\perf{S}\simeq 1^{\textup{nc}}$ and therefore it is not necessary to force $\gm{S}$ to be invertible.
\end{itemize}
%For the sake of coherence with the notation in the commutative case, we then pose
%\begin{equation}
 %   \psi^{\otimes}:\nc{S}^{\otimes}\rightarrow \shncm:=\uH{S}^{\textup{nc}}
%\end{equation}
\subsection{The bridge between motives and non-commutative motives}\label{the bridge between motives and non commutative motives}
We have now at our disposal the following picture:
\begin{equation}
    \begindc{\commdiag}[18]
    \obj(-20,20)[1]{$\sm^{\times}$}
    \obj(20,20)[2]{$\nc{S}^{\otimes}$}
    \obj(-20,-20)[3]{$\sh^{\otimes}$}
    \obj(20,-20)[4]{$\shncm$}
    \mor{1}{2}{$\perf{\bullet}$}
    \mor{1}{3}{$\Sigma^{\oo}_+$}[\atright,\solidarrow]
    \mor{2}{4}{$\psi^{\otimes}$}
    \mor{3}{4}{$\rperf$}[\atright,\dashArrow]
    \enddc
\end{equation}
The existence of the dotted map, which we name perfect realization, is granted by the universal property of $\sh^{\otimes}$. Indeed
\begin{itemize}
    \item $\psi \circ \perf{\bullet}$ sends (ordinary) Nisnevich squares to pushout squares in $\shnc$: this is a consequence of the compatibility of non-commutative Nisnevich squares with the classical ones and of the definition of $\shnc$,
    \item $\aff{1}{}$-invariance is forced by construction,
    \item $\psi(\proj{1}{S},\oo)$ is an invertible object in $\shncm$.
\end{itemize}
The fact that $\rperf$ commutes with colimits is also guaranteed by the universal property of $\sh^{\otimes}$. As both $\sh$ and $\shnc$ are presentable $\oo$-categories, the adjoint functor theorem \cite[Corollary 5.5.2.9]{htt} implies the existence of a lax-monoidal right adjoint
\begin{equation}
    \mathcal{M}^{\otimes}:\shncm \rightarrow \sh^{\otimes}.
\end{equation}
For our purposes, it will be very important the following result
\begin{thm}{\textup{\cite[Corollary 4.10]{ro15}}}
The image of the monoidal unit $1^{\textup{nc}}_S$ via $\mathcal{M}$ is equivalent to $\bu_S$.
\end{thm}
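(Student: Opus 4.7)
The plan is to exploit the representability characterization of $\bu_S$ recalled just before the statement: for every $X\in\sm$, one has $\Map_{\sh}(\Sigma^{\oo}_+X,\bu_S)\simeq \hk(X)$. I will therefore show that $\mathcal{M}(1^{\textup{nc}}_S)$ satisfies the same representability, naturally in $X$. Since the objects $\Sigma^{\oo}_+X$ for $X\in \sm$ generate $\sh$ under shifts and colimits, stable $\oo$-categorical Yoneda will then force $\mathcal{M}(1^{\textup{nc}}_S)\simeq \bu_S$.

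The first reduction is a formal one. The adjunction $\rperf\dashv \mathcal{M}$, together with the facts that $\rperf$ is symmetric monoidal and colimit-preserving and that $\rperf(\Sigma^{\oo}_+X)\simeq \psi(\perf{X})$ by its very construction via the universal property of $\sh$, gives
$$\Map_{\sh}(\Sigma^{\oo}_+X,\mathcal{M}(1^{\textup{nc}}_S))\simeq \Map_{\shnc}(\psi(\perf{X}),\psi(\perf{S})).$$
So the task is to identify this non-commutative mapping spectrum with $\hk(X)$.

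The second and substantive step is to unwind the three stages of the construction of $\shnc$ to compute the right-hand side as the value at $\perf{X}$ of the spectrum-valued presheaf obtained from $T\mapsto \Map_{\nc{S}}(T,\perf{S})$ by (i) Nisnevich sheafification in the non-commutative sense, (ii) $\aff{1}{}$-localization, and (iii) stabilization with respect to $S^1$. For $T=\perf{X}$ with $X\in\sm$, a morphism $\perf{X}\to \perf{S}$ in $\nc{S}=\dgcatft^{\textup{op}}$ is an $S$-linear dg functor $\perf{S}\to \perf{X}$, i.e.\ the datum of a perfect complex on $X$. The space of such morphisms, after group completion, computes connective K-theory $K^{\geq 0}(X)$; stabilization with respect to $S^1$ delivers the non-connective K-theory spectrum, and the two remaining localizations turn it into the Nisnevich-local and $\aff{1}{}$-invariant version, namely $\hk(X)$. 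The crucial inputs here are Thomason--Trobaugh Nisnevich descent for non-connective K-theory of perfect complexes, and the compatibility of non-commutative Nisnevich squares on objects of the form $\perf{X}$ with their classical counterparts (as recorded earlier in the excerpt).

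The main obstacle is precisely this third step: controlling the interaction of the three localizations with the mapping presheaf. One must verify that (a) the Nisnevich sheafification does not introduce extra sheafy corrections beyond the usual K-theoretic descent on smooth schemes, and (b) the $S^1$-stabilization produces exactly the non-connective delooping of K-theory rather than an overshoot (this uses that connective K-theory is already $S^1$-sensitive and that the loops in $\shnc$ match the Bass-style delooping used to define non-connective K-theory). Once these are settled, the resulting natural equivalence $\Map_{\sh}(\Sigma^{\oo}_+X,\mathcal{M}(1^{\textup{nc}}_S))\simeq \hk(X)\simeq \Map_{\sh}(\Sigma^{\oo}_+X,\bu_S)$ combined with the generating property of the $\Sigma^{\oo}_+X$ yields $\mathcal{M}(1^{\textup{nc}}_S)\simeq \bu_S$.
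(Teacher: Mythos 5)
First, a remark on context: the paper does not prove this statement at all — it is imported from \cite[Corollary 4.10]{ro15} (see also Theorem 1.8 there) — so there is no internal proof to compare against. Your outline is, in broad strokes, the strategy of Robalo's proof of the cited result: reduce by the adjunction $\rperf\dashv\mathcal{M}$ to computing $\Map_{\shnc}(\rperf(\Sigma^{\oo}_+X),1^{\textup{nc}}_S)$, identify this spectrum with $\hk(X)$ naturally in $X\in\sm$, and conclude by Yoneda since the $\Sigma^{\oo}_+X$ generate $\sh$ under colimits and desuspensions. The first reduction and the final Yoneda step are fine as stated (the identification $\rperf(\Sigma^{\oo}_+X)\simeq\psi(\perf{X})$ is built into the construction of $\rperf$ via the universal property of $\sh^{\otimes}$).

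The difficulty is that the two points you defer as ``obstacles to be settled'' are not finishing touches; they are the content of the theorem. For (a): the passage from the space of morphisms $\perf{X}\to\perf{S}$ in $\nc{S}$ — i.e.\ the underlying space of objects of $\perf{X}$ — to connective K-theory is \emph{not} a direct-sum group completion (Waldhausen K-theory of $\perf{X}$ does not agree with the group completion of the symmetric monoidal groupoid of perfect complexes under $\oplus$ in general); in Robalo's argument the necessary additivity enters through the definition of noncommutative Nisnevich squares as exact sequences of dg categories, not through sheafification of a presheaf of spaces alone. For (b): the comparison of the $S^1$-stabilization with the Bass--Thomason--Trobaugh non-connective delooping, and the identification of Nisnevich-local, $\aff{1}{}$-invariant non-connective K-theory with $\hk$, occupy the technical core of \cite[\S 4]{ro15}. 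Finally, your sketch omits one necessary verification: that no further $\gm{S}$-inversion is required on the noncommutative side, i.e.\ that $\psi(\proj{1}{S},\oo)$ is already invertible (indeed equivalent to the unit). This is what forces the representing object to be $\proj{1}{}$-periodic, hence $\bu_S$ rather than a non-periodic K-theory spectrum. As it stands the proposal is a correct roadmap to the cited result, but not a proof of it.
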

In particular, the $\oo$-functor $\mathcal{M}$ factors trough the full subcategory of $\sh$ spanned by modules over $\bu_S$:
\begin{equation}
    \mathcal{M}^{\otimes}: \shncm\rightarrow \md_{\bu_S}(\sh)^{\otimes}.
\end{equation}
In \cite{brtv}, the authors introduced a dual version of this $\oo$-functor.
Consider the endomorphism of $\shnc$ induced by the internal hom:
\begin{equation}
    \rhom_{\shnc}(-,1^{\textup{nc}}_S):\shncmop\rightarrow \shncm.
\end{equation}
Then consider
\begin{equation}
    \begindc{\commdiag}[14]
    \obj(-50,20)[1]{$\dgcat^{\textup{ft},\otimes}$}
    \obj(0,20)[2]{$\shncmop$}
    \obj(90,20)[3]{$\shncm$}
    \obj(140,20)[4]{$\md_{\bu_S}(\sh^{\otimes})$}
    \obj(-50,-20)[5]{$\dgcatmo$}
    \mor{1}{2}{$\psi^{\otimes}$}
    \mor{2}{3}{$\rhom_{\shnc}(-,1^{\textup{nc}_S})$}
    \mor{3}{4}{$\mathcal{M}^{\otimes}$}
    \mor{1}{5}{$$}[\atright,\injectionarrow]
    
    \mor{5}{2}{$\psi^{\otimes}$}[\atright,\solidarrow]
    \cmor((-30,-20)(80,-20)(100,-20)(130,-12)(140,10)) \pup(50,-24){$\mathcal{M}^{\vee,\otimes}$}
    \enddc
\end{equation}
where the vertical map on the left is given by the inclusion of $\dgcat^{\textup{ft},\otimes}$ in its Ind-completion $\dgcatmo$ and the oblique $\psi^{\otimes}$ is induced by the universal property of the Ind-completion.
Let $T \in \dgcatm$. Then $\Mv(T)$ is the sheaf of spectra $X \in \sm \mapsto \hk(\perf{X}\otimes_{S} T)$:
\begin{equation}
    \Mv(T)(X)=\Map_{\sh}(\Sigma^{\oo}_+X,\Mv(T))\simeq
\end{equation}
\begin{equation*}
    \Map_{\shnc}(\rperf(\Sigma^{\oo}_+X),\rhom_{\shnc}(T,1^{\textup{nc}}_S))
\end{equation*}
\begin{equation*}
    \simeq \Map_{\shnc}(\rperf(\Sigma^{\oo}_+X)\otimes_{S} T,1^{\textup{nc}}_S)\simeq \hk(\perf{X}\otimes_{S} T).
\end{equation*}
Moreover, $\Mv$ has the following nice properties
\begin{itemize}
    \item it is lax monoidal (it is a composition of lax monoidal $\oo$-functors),
    \item it commutes with filtered colimits (see \cite[Remark 3.4]{brtv}),
    \item it sends exact sequences of dg categories to fiber-cofiber sequences in $\md_{\bu_S}(\sh)^{\otimes}$ (see \cite[Corollary 3.3]{brtv}).
\end{itemize}
We will refer to $\Mv$ as the motivic realization of dg categories.
\subsection{\texorpdfstring{$\ell$}{l}-adic realization of dg categories}\label{the l-adic realization of dg categories}
We will need a way to associate an $\ell$-adic sheaf to a dg category. We follow the construction given in \cite[\S3.6, \S3.7]{brtv}, which relies on results of J.~Ayoub and D.-C.~Cisinski - F.~D\'eglise.
Let $\HQ$ be the Eilenberg-MacLane spectrum of rational homotopy theory. Then we get
\begin{equation}
    -\otimes \HQ:\sh^{\otimes}\rightarrow \md_{\HQ}(\sh)^{\otimes},
\end{equation}
which identifies the $\oo$-category on the right hand side with non-torsion objects of $\sh$.
Similarly, if one puts $\bu_{S,\Q}:=\HQ\otimes \bu_S$, then one gets
\begin{equation}
    -\otimes \HQ:\md_{\bu_S}(\sh)^{\otimes}\rightarrow \md_{\bu_{S,\Q}}(\sh)^{\otimes},
\end{equation}
where the right hand side identifies with non-torsion $\bu_S$-modules. This $\oo$-functor is strongly compatible with the $6$-functors formalism and Tate twists.
Moreover, we state for future reference the following crucial fact:
\begin{thm}{\textup{\cite[Theorem 5.3.10]{ri10}, \cite[\S14.1]{cd19}, \cite[Proposition 3.35]{brtv}}}\label{thm riou}
Let $X$ be a scheme of finite Krull dimension an let $\MB{X}\in \textup{CAlg}(\textup{\textbf{SH}}_X)$ denote the spectrum of Beilinson motivic cohomology. Then the canonical morphism $1_X(1)[2]\rightarrow \bu_X\otimes \HQ=\bu_{X,\Q}$ induces an equivalence of commutative algebra objects
\begin{equation}
    \MB{X}(\beta):=Sym(\MB{X}(1)[2])[\nu^{-1}]\simeq \bu_{X,\Q}=\bu_X\otimes \HQ,
\end{equation}
where $\nu$ is the free generator in degree $(1)[2]$.
\end{thm}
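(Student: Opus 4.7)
The plan is to construct the canonical map of commutative algebras, verify that the Bott element is sent to an invertible class, and then check the resulting map is an equivalence by reducing to the Adams eigenspace decomposition of rational algebraic K-theory.

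First I would construct the comparison. The canonical morphism $1_X(1)[2]\rightarrow \bu_{X,\Q}$ is essentially the class $[\Oc(1)]-[\Oc]$ in $\bu_{X,\Q}^0((\proj{1}{X},\oo))$, using the identification $1_X(1)[2]\simeq cofib(1_X \xrightarrow{\oo} \Sigma^{\oo}_+\proj{1}{X})$. Tensoring with $\HQ$ factors it through $\MB{X}(1)[2]$, and the universal property of the free commutative algebra in $\md_{\HQ}(\sh)$ promotes it to a map of commutative algebras $Sym(\MB{X}(1)[2])\rightarrow \bu_{X,\Q}$. By the algebraic Bott periodicity $\bu_X\simeq \bu_X(1)[2]$, the image $\nu$ of the canonical generator becomes invertible in $\bu_{X,\Q}$, so the universal property of localization at $\nu$ yields the desired algebra morphism $Sym(\MB{X}(1)[2])[\nu^{-1}]\rightarrow \bu_{X,\Q}$.

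Next I would prove this map is an equivalence. After inverting $\nu$ the left hand side splits canonically as $\bigoplus_{p\in \Z}\MB{X}(p)[2p]$. On the right hand side, rational K-theory carries Adams operations $\psi^k$ (from the $\lambda$-ring structure on algebraic K-theory) whose weight eigenspace decomposition $\bu_{X,\Q}\simeq \bigoplus_{p\in \Z}\bu_{X,\Q}^{(p)}$ identifies the $p$-th summand with $\MB{X}(p)[2p]$ via the motivic Chern character; this is the substance of Riou's theorem, building on work of Morel, Cisinski--D\'eglise, Soul\'e and Gillet. The comparison map respects both decompositions because $\nu$ is sent to the Bott element generating the weight-one piece, so the equivalence follows summand by summand.

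The main obstacle, and what occupies most of the literature, is establishing the eigenspace decomposition of $\bu_{X,\Q}$ in sufficient generality (arbitrary $X$ of finite Krull dimension) and showing that it upgrades to an equivalence of commutative algebra objects rather than merely of underlying spectra. For the first point one combines Cisinski--D\'eglise's continuity, localization and absolute purity for Beilinson motives with a reduction to regular bases (where the Adams decomposition is classical), together with the good base change behaviour of $\MB{X}$. For the second point one exploits that the motivic cohomology ring $\bigoplus_p \MB{X}(p)[2p]$ is, after Bott inversion, freely generated as a commutative $\MB{X}$-algebra by $\nu$, so compatibility with the K-theoretic product reduces to checking a single multiplicative relation, which is the multiplicativity of the motivic Chern character.
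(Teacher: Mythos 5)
The paper states this result without proof, citing it directly to Riou, Cisinski--D\'eglise and BRTV, so there is no internal argument to compare against. Your sketch — constructing the algebra map from the Bott class via the free commutative algebra and localization at $\nu$, then checking it is an equivalence using the Adams-operation weight decomposition $\bu_{X,\Q}\simeq \bigoplus_p \MB{X}(p)[2p]$ and the motivic Chern character — is precisely the standard argument carried out in those references, and correctly identifies where the real work lies.
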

By using the theory of h-motives developed by the authors in \cite{cd16} one can define an $\ell$-adic realization $\oo$-functor
\begin{equation}\label{l-adic realization from beilinson motives}
    \Rl: \md_{\MB{}}(\textup{\textbf{SH}})\rightarrow \shvl(-)
\end{equation}
strongly compatible with the $6$-functors formalism and Tate twists, at least for noetherian schemes of finite Krull dimension.
Then, using the equivalence
\begin{equation}
    \md_{\bu}(\md_{\MB{}}(\textup{\textbf{SH}}))\simeq \md_{\bu}(\textup{\textbf{SH}})
\end{equation}
we obtain an $\ell$-adic realization $\oo$-functor
\begin{equation}
\begindc{\commdiag}[16]
    \obj(-70,5)[1]{$\Rl: \md_{\bu}(\textup{\textbf{SH}})$}
    \obj(0,5)[2]{$\md_{\bu_{\Q}}(\textup{\textbf{SH}})$}
    \obj(70,5)[3]{$\md_{\Rl(\bu)}(\shvl(-))$} 
    \obj(70,-5)[4]{$\simeq \md_{\Ql{}(\beta)}(\shvl(-))$}
    \mor{1}{2}{$-\otimes \HQ$}
    \mor{2}{3}{$$}
\enddc
\end{equation}

strongly compatible with the $6$-functors formalism and Tate twists. The last equivalence above holds as (\ref{l-adic realization from beilinson motives}) is symmetric monoidal and commutes with Tate twists (see \cite[Remark 3.43]{brtv}):
\begin{equation}
    \Rl_X(\bu_X)\underbracket{\simeq}_{\textbf{Theorem }\ref{thm riou}}\Rl_X(Sym(\MB{X}(1)[2])[\nu^{-1}])
    \end{equation}
\begin{equation*}
\simeq Sym(\Rl_X(\MB{X})(1)[2])[\nu^{-1}]
\simeq \Ql{,X}(\beta)=Sym(\Ql{,X}(1)[2])[\nu^{-1}].
\end{equation*}
We will use the notation
\begin{equation}
    \Rlv_S:=\Rl_S \circ \Mv_S : \dgcatmo \rightarrow \md_{\Ql{,S}(\beta)}(\shvl(S))
\end{equation}
and refer to this $\oo$-functor as the $\ell$-adic realization of dg categories.
\section{Motivic realization of twisted LG models}
\begin{notation}\label{context non affine base}
Let $S$ be a noetherian (not necessarely affine) regular scheme. We will label $\sch$ the category of separated $S$-schemes of finite type.
\end{notation}
\subsection{The category of twisted LG models}\label{the category of twisted LG models}
Consider the category $\sch$ and let $\Lc_S$ be a line bundle on $S$. Then we can consider the category of Landau-Ginzburg models over $(S,\Lc_S)$, defined as follows:
\begin{itemize}
    \item Objects consists of pairs $(X,s_X)$, where $p:X\rightarrow S$ is a (flat) $S$-scheme and $s_X$ is a section of $\Lc_X:=p^*\Lc_S$.
    \item Morphisms $(X,s_X)\xrightarrow{f}(Y,s_Y)$ consist of morphisms $f:X\rightarrow Y$ in $\sch$ such that $s_X=f^*s_Y$.
    \item Composition and identity morphisms are clear.
\end{itemize}
We will denote this category by $\tlgm{S}$.
\begin{rmk}\label{tlgm trivial line bundle}
If $\Lc_S$ is the trivial line bundle, then $\tlgm{S}$ coincides with the category of usual Landau-Ginzburg models over $S$ (as defined for example in \cite{brtv}). Indeed, in this case $\V(\Lc_S)=Spec_{\Oc_S}(\Oc_S[t])$. Therefore, for any $X\in \sch$, a section of $\Lc_X$ consists of a morphism $\Oc_X[t]\rightarrow \Oc_X$, i.e. of a global section of $\Oc_X$.
\end{rmk}
\begin{construction}
It is possible to endow the category $\tlgm{S}$ with a symmetric monoidal structure 
\begin{equation}
    \boxplus: \tlgm{S}\times \tlgm{S}\rightarrow \tlgm{S}
\end{equation}
\[
\bigl ((X,s_X),(Y,s_Y) \bigr )\mapsto (X,s_X)\boxplus (Y,s_Y)=(X\times_S Y,s_X\boxplus s_Y),
\]
where $s_x\boxplus s_Y=p_X^*s_X+p_Y^*s_Y$. This is clearly (weakly) associative and the unit is given by $(S,0)$, where $0$ stands for the zero section of $\Lc_S$.

Notice that this tensor product is induced by the abelian group structure on $\V(\Lc_S)=Spec_S(Sym_{\Oc_S}(\Lc_S^{\vee}))$.
\end{construction}
\begin{rmk}
If $\Lc_S$ is the trivial line bundle, then $\boxplus$ coincides with the tensor product on the category of LG modules over $S$.
\end{rmk}
We will now exhibit twisted LG models as a fibered category.
\begin{defn}
Let $\fclgm$ be the category defined as follows:
\begin{itemize}
    \item Objects are triplets $(f:Y\rightarrow X,\Lc_X,s_Y)$ where $f$ is a flat morphism between $S$-schemes, $\Lc_X$ is a line bundle on $X$ and $s_Y$ is a global section of $f^*\Lc_X$.
    \item Given two objects $(f_i:Y_i\rightarrow X_i,\Lc_{X_i},s_{Y_i})$ ($i=1,2$), a morphism from the first to the second is the datum of a commutative diagram
    \begin{equation}
        \begindc{\commdiag}[16]
        \obj(-15,15)[1]{$Y_1$}
        \obj(15,15)[2]{$X_1$}
        \obj(-15,-15)[3]{$Y_2$}
        \obj(15,-15)[4]{$X_2$}
        \mor{1}{2}{$f_1$}
        \mor{1}{3}{$g_Y$}
        \mor{2}{4}{$g_X$}
        \mor{3}{4}{$f_2$}
        \enddc
    \end{equation}
    and of an isomorphism $\alpha: g_X^*\Lc_{X_2}\rightarrow \Lc_{X_1}$ such that $s_{Y_1}$ corresponds to $s_{Y_2}$ under the isomorphism
    \begin{equation}
        g_{Y}^*f_2^*\Lc{X_2}\simeq f_1^*g_{X}^*\Lc_{X_2}\underbracket{\simeq}_{\alpha} f_1^*\Lc_{X_1}.
    \end{equation}
    By abuse of notation, we will say in the future that $g_Y^*(s_{Y_2})=s_{Y_1}$ if this condition is satisfied. We will denote such a morphism by $(g,\alpha)$.
    \item Composition and identities are defined in an obvious way.
\end{itemize}
We will refer to this category as the category of twisted Landau-Ginzburg models of rank $1$ over $(S,\Lc_S)$ (twisted LG models for short).
\end{defn}
Notice that there is a functor
\begin{equation}\label{fibration LG}
    \pi:\fclgm \rightarrow \int_{X\in \sch} \bgm{S}(X)
\end{equation}
defined as
\begin{equation*}
    \begindc{\commdiag}[20]
    \obj(0,15)[1]{$(f_1:Y_1\rightarrow X_1,\Lc_{X_1},s_{Y_1})$}
    \obj(60,15)[2]{$(X_1,\Lc_{X_1})$}
    \obj(0,-15)[3]{$(f_2:Y_2\rightarrow X_2,\Lc_{X_2},s_{Y_2})$}
    \obj(60,-15)[4]{$(X_2,\Lc_{X_2})$}
    \mor{1}{3}{$(g,\alpha)$}
    \mor{2}{4}{$(g_X,\alpha)$}
    \obj(38,0)[5]{$\mapsto$}
    \enddc
\end{equation*}
\begin{lmm}
The functor $(\ref{fibration LG})$ exhibits $\fclgm$ as a fibered category over \newline $\int_{X\in \sch} \bgm{S}(X)$.
\end{lmm}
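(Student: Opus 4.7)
The plan is the standard one for verifying fiberedness: given a morphism in the base, we produce a Cartesian lift by pulling back the scheme and the section, then check the universal property.

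More precisely, fix a morphism $(g_X,\alpha): (X_1,\Lc_{X_1}) \to (X_2,\Lc_{X_2})$ in $\int_{X\in \sch} \bgm{S}(X)$ and an object $(f_2 : Y_2 \to X_2, \Lc_{X_2}, s_{Y_2})$ of $\fclgm$ lying over its target. First I form the ordinary fiber product $Y_1 := Y_2\times_{X_2}X_1$ in $\sch$, equipped with the two projections $g_Y : Y_1 \to Y_2$ and $f_1 : Y_1 \to X_1$. Since $f_2$ is flat and flatness is stable under base change, $f_1$ is flat as well. Next I produce the section $s_{Y_1}$ as the image of $s_{Y_2}$ under the composition
\begin{equation*}
    f_2^*\Lc_{X_2}(Y_2) \xrightarrow{g_Y^*} g_Y^*f_2^*\Lc_{X_2}(Y_1) \simeq f_1^*g_X^*\Lc_{X_2}(Y_1) \xrightarrow{f_1^*\alpha} f_1^*\Lc_{X_1}(Y_1),
\end{equation*}
where the middle isomorphism is the canonical one from $f_2\circ g_Y = g_X\circ f_1$. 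By construction the pair $(g_Y,g_X)$ together with $\alpha$ defines a morphism $(g,\alpha) : (f_1,\Lc_{X_1},s_{Y_1}) \to (f_2,\Lc_{X_2},s_{Y_2})$ in $\fclgm$ whose image under $\pi$ is exactly $(g_X,\alpha)$.

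It remains to check that $(g,\alpha)$ is Cartesian. Suppose $(h,\beta) : (f_0 : Y_0 \to X_0, \Lc_{X_0}, s_{Y_0}) \to (f_2,\Lc_{X_2},s_{Y_2})$ is any morphism in $\fclgm$ whose image $\pi(h,\beta)$ factors in the base as $(g_X,\alpha)\circ (k_X,\gamma)$ for some $(k_X,\gamma) : (X_0,\Lc_{X_0}) \to (X_1,\Lc_{X_1})$. Then the universal property of the fiber product $Y_1 = Y_2 \times_{X_2} X_1$ in $\sch$ yields a unique morphism $k_Y : Y_0 \to Y_1$ with $f_1 \circ k_Y = k_X\circ f_0$ and $g_Y\circ k_Y = h_Y$, and I only need to verify that the pair $(k_Y,k_X)$, together with $\gamma$, defines a morphism in $\fclgm$, i.e.\ that $k_Y^*(s_{Y_1}) = s_{Y_0}$. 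This is a direct chase: pulling back the equation $\alpha(g_Y^*s_{Y_2}) = s_{Y_1}$ along $k_Y$ and using the factorization $\beta = \gamma\circ k_X^*\alpha$ (which is what the compatibility condition on $(h,\beta)$ amounts to) gives exactly $\beta(h_Y^*s_{Y_2}) = k_Y^*s_{Y_1}$, and the left-hand side equals $s_{Y_0}$ because $(h,\beta)$ is a morphism in $\fclgm$. Uniqueness of the lift is inherited from uniqueness of $k_Y$.

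The only genuinely delicate point is the bookkeeping of the canonical isomorphisms relating the four pullbacks of $\Lc_{X_2}$ along the square, and making sure the coherence data $\alpha,\beta,\gamma$ are composed in the correct order; I expect this to be the main obstacle, but it is essentially formal once the square $Y_1 = Y_2\times_{X_2}X_1$ is fixed.
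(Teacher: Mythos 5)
Your proposal is correct and follows essentially the same route as the paper: construct the candidate Cartesian lift by forming the scheme-theoretic fiber product $Y_1 = Y_2\times_{X_2}X_1$, pull back the section via $g_Y^*$ (twisted by $\alpha$), and verify Cartesianness by invoking the universal property of the fiber product and then checking that the induced map respects the sections. Your write-up is in fact slightly more careful than the paper's about tracking the coherence isomorphisms $\alpha,\beta,\gamma$, but the argument is the same.
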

\begin{proof}
Consider a map $(g_X,\alpha):(X_1,\Lc_{X_1})\rightarrow (X_2,\Lc_{X_2})$ in $\int_{X\in \sch} \bgm{S}(X)$ and let $(f_2:Y_2\rightarrow X_2,\Lc_{X_2},s_{Y_2})$ be an object of $\fclgm$ over $(X_2,\Lc_{X_2})$. Consider the morphism 
\[
(g,\alpha):(f_1:Y_1:=X_1\times_{X_2}Y_2\rightarrow X_1,\Lc_{X_1},s_{Y_1})\rightarrow (f_2:Y_2\rightarrow X_2,\Lc_{X_2},s_{Y_2})
\]
where $g_Y$ is the projection $Y_1\rightarrow Y_2$ (which is flat as it is the pullback of a flat morphism), $f_1$ is the projection $Y_1\rightarrow Y_2$ and $s_{Y_1}$ is $g_Y^*(s_{Y_2})$. It is clear that it is a morphism of $\fclgm$ over $(g_X,\alpha)$. We need to show that it is cartesian. Consider
\begin{equation}
    \begindc{\commdiag}[20]
    \obj(-10,15)[1]{$(f_1:Y_1\rightarrow X_1,\Lc_{X_1},s_{Y_1})$}
    \obj(-10,-15)[2]{$(X_1,\Lc_{X_1})$}
    \obj(65,15)[3]{$(f_2:Y_2\rightarrow X_2,\Lc_{X_2},s_{Y_2})$}
    \obj(65,-15)[4]{$(X_2,\Lc_{X_2})$}
    \mor{1}{3}{$(g,\alpha)$}[\atright,\solidarrow]
    \mor{2}{4}{$(g_X,\alpha)$}
    \mor{1}{2}{$$}
    
    \mor{3}{4}{$$}
    \obj(-55,45)[5]{$(h:Z\rightarrow W,\Lc_{W},s_{Z})$}
    
    \obj(-55,0)[6]{$(W,\Lc_{W})$}
    
    \mor{5}{6}{$$}
    
    \mor{6}{2}{$(p,\beta)$}
    \mor{5}{3}{$(q,\gamma)$}
    \mor(-40,40)(-5,20){$((r,p),\beta)$}[\atright,\dashArrow]
    \enddc
\end{equation}
Then the universal property of $Y_1$ gives us an unique morphism $r:Z\rightarrow Y_1$ such that the compositions with $f_1$ and $g_Y$ are $p\circ h$ and $q$ respectively. We just need to show that $r^*(s_{Y_1})=s_Z$. But this is clear since $s_{Y_1}=g_Y^*(s_{Y_2})$, $q_Y^*(s_{Y_2})=s_Z$ and $g_Y\circ r=q_Y$.
\end{proof}
\begin{rmk}
Let $(X,\Lc_X)$ be an object of $\int_{X\in \sch} \bgm{S}(X)$. Then the fiber of $(X,\Lc_X)$ along $\pi$ is $\tlgm{X}$.
\end{rmk}
\begin{defn}
We say that a collection of maps $\{(g_i,\alpha_i):(U_i,\Lc_{U_i})\rightarrow (X,\Lc_{X})\}_{i\in I}$ in $\int_{X\in \sch} \bgm{S}(X)$ is a Zariski covering if $\{g_i:U_i\rightarrow X\}_{i\in I}$ is so. They clearly define a pre-topology on $\int_{X\in \sch} \bgm{S}(X)$\footnote{Notice that pullbacks exists in $\int_{X\in \sch} \bgm{S}(X)$: the pullback of $(Y_1,\Lc_{Y_1})\xrightarrow{(f_1,\alpha_1)}(X,\Lc_X)\xleftarrow{(f_2,\alpha_2)}(Y_2,\Lc_{Y_2})$ is $(Y_1\times_X Y_2,\Lc_{Y_1\times_X Y_2})$ with the projections defined in an obvious way}. We will refer to the corresponding topology by Zarisky topology on $\int_{X\in \sch} \bgm{S}(X)$. 
\end{defn}
\begin{lmm}
$\fclgm$ is a stack over $\int_{X\in \sch} \bgm{S}(X)$ endowed with the Zariski topology.
\end{lmm}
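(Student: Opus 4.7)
The plan is to reduce descent for $\fclgm$ to two familiar descent statements: Zariski descent for schemes equipped with a flat structure morphism, and Zariski descent for sections of a quasi-coherent sheaf. Once the fibered category structure is in hand (already established in the preceding lemma), it suffices to verify that, for every Zariski covering $\{(g_i,\alpha_i):(U_i,\Lc_{U_i})\to (X,\Lc_X)\}_{i\in I}$ of $\int_{X\in\sch}\bgm{S}(X)$, the associated descent category on objects and on morphisms of $\fclgm$ is equivalent to the fiber over $(X,\Lc_X)$, namely $\tlgm{X}$.

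First I would handle effectivity on objects. A descent datum consists of flat morphisms $f_i: Y_i \to U_i$ together with sections $s_{Y_i}$ of $f_i^*\Lc_{U_i} \simeq f_i^*g_i^*\Lc_X$, and isomorphisms $\varphi_{ij}$ over the overlaps $U_{ij} := U_i \times_X U_j$ matching the restrictions $(f_i)_{|U_{ij}}$ and $(f_j)_{|U_{ij}}$ as well as the restrictions of the sections, subject to the usual cocycle condition on triple overlaps. Since $\{U_i \to X\}$ is a Zariski cover and the morphisms $f_i$ are flat (hence in particular separated-locally on the target), the $Y_i$ glue to a scheme $Y$ with a flat structure morphism $f : Y \to X$; this is classical Zariski descent for schemes, and flatness descends because it is Zariski-local on the target. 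Next, the sections $s_{Y_i}$ are sections of the quasi-coherent sheaf $f^*\Lc_X$ restricted to the Zariski cover $\{Y_i \to Y\}$ induced by base change, and the cocycle identity on triple overlaps is precisely the compatibility needed for gluing sections of a quasi-coherent sheaf. As $f^*\Lc_X$ is a Zariski sheaf on $Y$, there is a unique section $s_Y$ restricting to each $s_{Y_i}$, producing the desired object $(f:Y\to X,\Lc_X,s_Y)$ of $\fclgm$ over $(X,\Lc_X)$.

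For morphisms I would argue similarly: given two objects of $\tlgm{X}$ and compatible local morphisms between their restrictions to the $U_i$ satisfying the cocycle condition, the morphisms of schemes glue via Zariski descent, and the compatibility $g_Y^*(s_{Y_2}) = s_{Y_1}$ holds globally because it holds on the Zariski cover and both sides are sections of a quasi-coherent sheaf on $Y_1$. The 2-categorical coherence conditions (compatibility of the isomorphisms $\alpha$ between line bundles) reduce to Zariski descent of $\bgm{S}$, which is already built into the total category $\int_{X\in\sch}\bgm{S}(X)$ and is a standard fact.

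The main obstacle, insofar as there is one, is bookkeeping rather than mathematical content: one must carefully track that the descent data over $\{U_i\}$ induces descent data over $\{Y_i\}$ (via the base-change $Y_{ij} \simeq Y_i\times_{U_i}U_{ij} \simeq Y_j\times_{U_j}U_{ij}$ provided by the isomorphisms $\varphi_{ij}$) so that the gluing of sections takes place on a genuine Zariski cover of the glued scheme $Y$. Once this identification is made, both effectivity of object descent and full faithfulness on morphism descent follow formally from the fact that schemes and quasi-coherent sheaves form stacks for the Zariski (indeed fpqc) topology.
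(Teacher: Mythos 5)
Your proposal is correct and follows exactly the route the paper takes: the paper's proof is a one-line appeal to the facts that morphisms of schemes are determined by their restrictions to a Zariski covering and that line bundles (hence their sections) are sheaves, which is precisely what you spell out in detail via Zariski descent for schemes and for sections of quasi-coherent sheaves. Your version simply makes explicit the effectivity-of-objects step and the bookkeeping that the paper leaves implicit.
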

\begin{proof}
This is a simple consequence of the fact that a morphism of schemes is uniquely determined by its restriction to a Zariski covering and that line bundles are Zariski sheaves.
\end{proof}
We conclude this section with the following observation:
\begin{lmm}\label{zariski descent tlgmm}
Let $\{(g_i,\alpha_i):(U_i,\Lc_{U_i})\rightarrow (X,\Lc_{X})\}_{i\in I}$ be a Zariski covering in $\int_{X\in \sch} \bgm{S}(X)$. Then the canonical functor
\begin{equation}\label{compatibility limits with tlgmo}
    \tlgm{X}^{\boxplus}\rightarrow \varprojlim \tlgm{U_i}^{\boxplus}
\end{equation}
is a symmetric monoidal equivalence.
\end{lmm}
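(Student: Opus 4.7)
The plan is to deduce this symmetric monoidal equivalence from two ingredients: the stack property of $\fclgm$ over $\int_{X\in\sch}\bgm{S}(X)$ established in the previous lemma, and the compatibility of the monoidal structure $\boxplus$ with pullbacks in the base.

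First I would verify that, for any morphism $(g_X,\alpha):(U,\Lc_U)\to(X,\Lc_X)$ in the base category, the induced pullback functor $\tlgm{X}\to\tlgm{U}$ is symmetric monoidal. This amounts to two elementary observations: (a) fiber products commute with base change, yielding a canonical isomorphism
\begin{equation*}
(Y\times_X Y')\times_X U \simeq (Y\times_X U)\times_U(Y'\times_X U);
\end{equation*}
and (b) pullback of sections of a line bundle is additive, and via $\alpha$ sends $p_Y^*s_Y + p_{Y'}^*s_{Y'}$ to the analogous sum of pullbacks upstairs. Together with the evident compatibility of units $(X,0)\mapsto(U,0)$, this makes $(W,\Lc_W)\mapsto\tlgm{W}^{\boxplus}$ into a pseudo-functor valued in symmetric monoidal categories. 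Consequently, the limit $\varprojlim\tlgm{U_i}^{\boxplus}$ inherits a canonical symmetric monoidal structure, and the comparison functor $\tlgm{X}^{\boxplus}\to\varprojlim\tlgm{U_i}^{\boxplus}$ is symmetric monoidal by construction.

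It then suffices to check that the underlying functor of categories is an equivalence, which is precisely the fibral content of the stack property proved in the previous lemma. Explicitly, an object of $\varprojlim\tlgm{U_i}$ is a collection of pairs $(Y_i,s_{Y_i})\in\tlgm{U_i}$ together with isomorphisms over the double overlaps $U_i\times_X U_j$ satisfying the cocycle condition and compatible with the sections; Zariski descent for flat $X$-schemes glues the $Y_i$ into a flat $Y\to X$, and Zariski descent for the sheaf of sections of the line bundle $f^*\Lc_X$ glues the $s_{Y_i}$ into a global section $s_Y$. Full faithfulness is analogous, since morphisms in $\tlgm{X}$ are morphisms of $X$-schemes subject to a compatibility condition on sections, both of which can be verified on a Zariski cover. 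The only point requiring attention, but not a genuine obstacle, is the standard fact that limits of symmetric monoidal categories along symmetric monoidal functors are computed by the limits of the underlying categories endowed with the pointwise monoidal structure; invoking this, the lemma reduces to the assertion already established.
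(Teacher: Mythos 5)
Your proposal is correct and follows essentially the same route as the paper: construct the symmetric monoidal pullback functors, reduce to the underlying functor via the fact that forgetting the monoidal structure commutes with limits (the paper phrases this as the forgetful functor being a right adjoint), and then invoke the Zariski descent statement of the preceding lemma. The extra detail you supply on gluing the schemes and the sections is exactly what the paper's one-line proof of that previous lemma leaves implicit.
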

\begin{proof}
Consider the functors
\begin{equation}
    \tlgm{X}\rightarrow \tlgm{U_i}
\end{equation}
\begin{equation*}
    (f:Y\rightarrow X,s_Y)\mapsto (f_i:Y\times_X U_i\rightarrow U_i,s_{Y|U_i}).
\end{equation*}
It is easy to see that they respect the tensor structure. Therefore, we get the desired symmetric monoidal functor $(\ref{compatibility limits with tlgmo})$ in the (big) category of symmetric monoidal categories and symmetric monoidal functors. Then, in order to prove that is a symmetric monoidal equivalence, it suffices to show that the underlying functor 
\begin{equation}
     Forget(\tlgm{X}^{\boxplus}\rightarrow \varprojlim \tlgm{U_i}^{\boxplus})\simeq (\tlgm{X}\rightarrow Forget(\varprojlim \tlgm{U_i}^{\boxplus})).
\end{equation}
As the functor which forgets the symmetric monoidal structure of a category is a right adjoint, it preserves limits. Thus,
\begin{equation}
    Forget(\varprojlim \tlgm{U_i}^{\boxplus})\simeq \varprojlim \tlgm{U_i}.
\end{equation}
The assertion now follows from the previous lemma.
\end{proof}
\subsection{The dg category of singularities of a twisted LG model}\label{The dg category of singularities of a twisted LG model}
Let $(X,s_X)$ be a twisted LG model over $(S,\Lc_S)$. The section $s_X$ defines a closed sub-scheme of $X$. Since we are not assuming that the section is regular, some derived structure might appear. More precisely: let $\Oc_X\rightarrow \Lc_X$ be the morphism of $\Oc_X$-modules associated to $s_X$. Then, taking duals, it defines a morphism $\Lc_X^{\vee}\rightarrow \Oc_X$. If we apply the relative spectrum functor, we get a morphisms $Spec_{\Oc_X}(\Oc_X)=X\rightarrow Spec_{\Oc_X}(Sym_{\Oc_X}(\Lc_X^{\vee}))=\V(\Lc_X)$, i.e. a section of the vector bundle associated to $\Lc_X$. By abuse of notation, we will label this morphism by $s_X$. We can also consider the zero section $X\rightarrow \V(\Lc_X)$, which is the morphism associated to $0\in \Lc_X(X)$. 
\begin{defn}\label{derived zero locus global section line bundle}
The derived zero locus of $s_X$ is defined as the derived pullback
\begin{equation}\label{pullback square derived zero locus}
    \begindc{\commdiag}[20]
    \obj(0,15)[1]{$X_0$}
    \obj(30,15)[2]{$X$}
    \obj(0,-15)[3]{$X$}
    \obj(30,-15)[4]{$\V(\Lc_X).$}
    \mor{1}{2}{$\mathfrak{i}$}
    \mor{1}{3}{$$}
    
    \mor{2}{4}{$s_X$}
    \mor{3}{4}{$0$}
    \enddc
\end{equation}

\end{defn}
\begin{rmk}
Notice that the zero section $X\xrightarrow{0}\V(\Lc_X)$ is an lci morphism (Zariski locally, it is just the zero section a scheme $Y$ in the affine line $\aff{1}{Y}$). It is well known that this class of morphisms is stable under derived pullbacks. In particular, $\mathfrak{i}:X_0\rightarrow X$ is a derived lci morphism.
\end{rmk}
\begin{rmk}
There is a truncation morphism $\mathfrak{t}:\pi_0(X_0)\rightarrow X$, where $\pi_0(X_0)$ is the classical scheme associated to $X_0$. Whenever $s_X$ is a regular section, the truncation morphism is an equivalence in the $\oo$-category of derived schemes.
\end{rmk}

\begin{rmk}
Of major importance for our purposes is that $\mathfrak{i}$ is an lci morphism. Indeed, by \cite{to12}, if $f:Y\rightarrow Z$ is an lci morphism of derived schemes and $E\in \perf{Y}$, then $f_*E\in \perf{Z}$. In particular, we get that $\perf{X_0}$ is a full subcategory of $\cohbp{X_0}{X}$, the full subcategory of $\cohb{X_0}$ spanned by those objects $E \in \cohb{X_0}$ such that $f_*E\in \perf{X}$.
\end{rmk}

The dg category of absolute singularities of a derived scheme is a non-commutative invariant (in the sense of Kontsevich) which captures the singularities of the scheme. 
%For the reader's convenience, let us recall the definition of this crucial character, that we have already introduced in the previous chapter:
\begin{defn}
Let $Z$ be a derived scheme of finite type over $S$ whose structure sheaf is cohomologically bounded. Then the dg category of absolute singularities is the dg quotient in $\dgcatm$
\begin{equation}
    \sing{Z}:=\cohb{Z}/\perf{Z}.
\end{equation}
\end{defn}
\begin{rmk}
It is a classical theorem due to M.~Auslander - D.~A.~Buchsbaum (\cite[Theorem 4.1]{ab56}) and J.-P.~Serre (\cite[Th\'eor\`eme 3]{se55}) that a noetherian ring $A$ is regular if and only if it has finite global dimension. This means that the category of perfect complexes $\perf{A}$ coincides with that of coherent bounded complexes $\cohb{A}$, i.e. $\sing{A}\simeq 0$. This explains the name of this object.
\end{rmk}
\begin{rmk}
Notice that the hypothesis on $Z$ are crucial, as in general the structure sheaf of a derived scheme might not be cohomologically bounded. As an example of a derived scheme that doesn't sit in this class of objects, consider 
\[
Spec(\C[x]\otimes^{\mathbb{L}}_{\C[x,y]/(xy)}\C[y])\simeq Spec(\C[x])\times^h_{Spec(\C[x,y]/(xy))}Spec(\C[y]).
\]
By tensoring the projective $\C[x,y]/(xy)$-resolution of $\C[x  ]$
\[
\dots \xrightarrow{y} \C[x,y]/(xy)\xrightarrow{x}\C[x,y]/(xy)\xrightarrow{y}\C[x,y]/(xy)\rightarrow 0
\]
with $\C[y]$ over $\C[x,y]/(xy)$ we get that this derived scheme is the spectrum of the cdga
\[
\dots \xrightarrow{y} \C[y]\xrightarrow{0}\C[y]\xrightarrow{y}\C[y]\rightarrow 0,
\]
which has nontrivial cohomology in every even negative degree.
\end{rmk}
Since $\mathfrak{i}$ is an lci morphism of derived schemes, by \cite{to12} and by \cite{gr17} the pushforward induces a morphism 
\begin{equation}\label{push sing}
\mathfrak{i}_*: \sing{X_0}\rightarrow \sing{X}.
\end{equation}
\begin{defn}
Let $(X,s_X)$ be a twisted LG model over $(S,\Lc_S)$. Its dg category of singularities is defined as the fiber in $\dgcatm$ of $(\ref{push sing})$:
\begin{equation}
    \sing{X,s_X}:=fiber\bigl(\mathfrak{i}_*:\sing{X_0}\rightarrow \sing{X} \bigr).
\end{equation}
\end{defn}
\begin{rmk}
It is a consequence of the functoriality properties of the pullback and that of taking quotients and fibers in $\dgcatm$ that the assignments $(X,s_X)\mapsto \sing{X,s_X}$ can be organized in an $\oo$-functor
\begin{equation}\label{functor sing tlgm}
    \sing{\bullet,\bullet}:\tlgm{S}^{\textup{op}}\rightarrow \dgcatm.
\end{equation}
\end{rmk}
We will need to endow the $\oo$-functor $(\ref{functor sing tlgm})$ with the structure of a lax monoidal $\oo$-functor. In order to do so, we will introduce a strict model for $\cohbp{X_0}{X}$, following the strategy exploited in \cite{brtv}.
\begin{construction}\label{strict model for the derived zero locus} Let $(X,s_X)$ be a twisted LG model over $(S,\Lc_S)$. The identity morphism on $\Lc^{\vee}_X$ induces a $Sym_{\Oc_X}(\Lc^{\vee}_X)$-projective resolution of $\Oc_X$, namely
\begin{equation}
    \Lc^{\vee}_X\otimes_{\Oc_X}Sym_{\Oc_X}(\Lc^{\vee}_X)\rightarrow Sym_{\Oc_X}(\Lc^{\vee}_X).
\end{equation}
This is a morphism of $Sym_{\Oc_X}(\Lc^{\vee}_X)$-modules (induced by the multiplication of $Sym_{\Oc_X}(\Lc_X^{\vee})$).
Base changing along the morphism $Sym_{\Oc_X}(\Lc^{\vee}_X)\rightarrow \Oc_X$ induced by $s_X$ we obtain that the cdga associated to $X_0$ is the spectrum of $\Lc^{\vee}_X\xrightarrow{s_X} \Oc_X$.
\end{construction}
\begin{ex}\label{derived zero locus zero section}
Consider the zero section of $\Lc_X$. Then the cdga associated to the structure sheaf of $X\times^h_{\V(\Lc_X)}X$ is the Koszul algebra in degrees $[-1,0]$ (recall that we use cohomological conventions)
\begin{equation}
    \Lc^{\vee}_X\xrightarrow{0}\Oc_X.
\end{equation}
\end{ex}
\begin{construction}\label{cohs section line bundle}
Let $S=Spec(A)$ be a noetherian, regular affine scheme and let $L$ be a projective $A$-module of rank $1$. Let $\tlgm{S}^{\textup{aff}}$ be the category of affine twisted LG models over $(S,L)$, i.e. of pairs $(Spec(B),s)$ where $B$ is a flat $A$-algebra of finite type and $s\in L_B=L\otimes_A B$. For any object $(X,s)=(Spec(B),s)$, the derived zero locus of $s$ is given by the spectrum of the Koszul algebra $K(B,s)$. The associated cdga is
\begin{equation*}
    K(B,s)=L_B^{\vee}\xrightarrow{s}B
\end{equation*}
concentrated in degrees $-1$ and $0$, where we label $s:L_B^{\vee}\rightarrow B$ the morphism induced by $s:B\rightarrow L_B$. Then, similarly to \cite[Remark 2.30]{brtv}, there is an equivalence between cofibrant $K(B,s)$-dg modules (denoted $\widehat{K(B,s)}$) and $\qcoh{X_0}$. Under this equivalence, $\cohb{X_0}$ corresponds to the full sub-dg category of $\widehat{K(B,s)}$ spanned by those cohomologically bounded dg modules with coherent cohomology (over $B/s=coker(L_B^{\vee}\xrightarrow{s}B)$) and $\perf{X_0}$ corresponds to homotopically finitely presented $K(B,s)$-dg modules. Moreover, the pushforward
\begin{equation*}
\mathfrak{i}_*:\qcoh{X_0}\rightarrow \qcoh{X}
\end{equation*}
corresponds to the forgetful functor
\begin{equation*}
    \widehat{K(B,s)}\rightarrow \widehat{B}.
\end{equation*}

Define $\cohs{B,s}$ as the dg category of $K(B,s)$-dg modules whose underlying $B$-dg module is perfect. Since $X$ is an affine scheme, every perfect $B$-dg module is equivalent to a strictly perfect one. Therefore, we can take $\cohs{B,s}$ as the dg category of $K(B,s)$-dg moodules whose underlying $B$-module is strictly perfect. More explicitly, such an object corresponds to a triplet $(E,d,h)$ where $(E,d)$ is a strictly perfect $B$-dg module and 
\begin{equation*}
    h:E\rightarrow E\otimes_B L_B[-1]
\end{equation*}
is a morphism such that 
\begin{equation*}
    (h\otimes id_{L_B}[-1])\circ h=0 \hspace{0.5cm} 
    [d,h]=h\circ d+(d\otimes id[-1])\circ h=id \otimes s.
\end{equation*}
Notice that $\cohs{B,s}$ is a locally flat $A$-linear dg category. This follows immediately from the fact that the underlying complexes of $B$-modules are strictly perfect and from the fact that $B$ is a flat $A$-algebra.

\end{construction}

This strict dg category is analogous to the one that has been introduced in \cite{brtv}. It gives us a strict model for the $\oo$-category $\cohbp{X_0}{X}$. Indeed, the following result holds:
\begin{lmm}\textup{\cite[Lemma 2.33]{brtv}}\\
Let $\cohs{B,s}^{\textup{acy}}$ be the full sub-category of $\cohs{B,s}$ spanned by acyclic dg modules. Then the cofibrant replacement induces an equivalence of dg categories
\begin{equation}
    \cohs{B,s}[\textup{q.iso}^{-1}]\simeq \cohs{B,s}/\cohs{B,s}^{\textup{acy}}\simeq \cohbp{X_0}{X}.
\end{equation}
As a consequence, if we label $\textup{Perf}^s(B,s)$ the full subcategory of $\cohs{B,s}$ spanned by perfect $K(B,s)$-dg modules, there are equivalances of dg categories
\begin{equation}
   \cohs{B,s}/\textup{Perf}^s(B,s)\simeq \cohbp{X_0}{X}/\perf{X_0}\simeq \sing{X,s}.
   \end{equation}
\end{lmm}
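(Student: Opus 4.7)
The first equivalence $\cohs{B,s}[\textup{q.iso}^{-1}]\simeq \cohs{B,s}/\cohs{B,s}^{\textup{acy}}$ is a formal consequence of Drinfeld's description of dg quotients (\cite{dri}): localizing at quasi-isomorphisms and taking the dg quotient by the full sub-dg category of acyclic objects give DK-equivalent dg categories whenever the ambient dg category is pretriangulated. So the real content is the identification with $\cohbp{X_0}{X}$. The plan is to build a dg functor $F:\cohs{B,s}\rightarrow \cohbp{X_0}{X}$, show it sends quasi-isomorphisms to equivalences, show it is fully faithful on the homotopy category of the localization, and finally show essential surjectivity.

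For the construction of $F$, I use the model $\qcoh{X_0}\simeq \widehat{K(B,s)}$ from Construction \ref{cohs section line bundle} and send $(E,d,h)\in \cohs{B,s}$ to its cofibrant replacement as a $K(B,s)$-module. The key computation is that objects of $\cohs{B,s}$ are already homotopy-flat over $K(B,s)$: the underlying $B$-complex is strictly perfect (a bounded complex of projective $B$-modules) and $K(B,s)=[L_B^{\vee}\xrightarrow{s}B]$ is a two-term complex of flat $B$-modules, so the derived and underived tensor products with $K(B,s)$ agree up to quasi-isomorphism. Consequently the cofibrant replacement map is a quasi-isomorphism of $K(B,s)$-dg modules, which gives both well-definedness of $F$ and its fully faithfulness after inverting quasi-isomorphisms (the mapping complexes in $\cohs{B,s}$ already compute the correct derived $\mathbb{R}\textup{Hom}_{K(B,s)}$). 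That the image lies in $\cohbp{X_0}{X}$ follows because the forgetful functor $\widehat{K(B,s)}\rightarrow \widehat{B}$ corresponds to $\mathfrak{i}_*$, and the image of a strictly perfect $B$-module is perfect; cohomological boundedness and coherence over $B/s$ are also inherited from the strict perfectness and finite generation of the cohomology.

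The essential surjectivity is the step I expect to be the main obstacle. Given $M\in \cohbp{X_0}{X}$, the underlying $K(B,s)$-module has, by hypothesis, perfect image under $\mathfrak{i}_*$, so it is quasi-isomorphic as a $B$-dg module to a strictly perfect complex $(E,d)$. One must then transport the $K(B,s)$-action along this quasi-isomorphism, i.e.\ produce a degree $-1$ map $h:E\rightarrow E\otimes_B L_B[1]$ with $h^2=0$ and $[d,h]=\textup{id}\otimes s$ compatibly with the original action up to homotopy. This is a lifting problem for a two-term dg algebra and can be solved inductively along the filtration by cohomological degree of $E$, using the freedom to modify $h$ by coboundaries; the obstructions vanish because we have already fixed a genuine quasi-isomorphism of the underlying $B$-complexes. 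Alternatively, one can argue via a small-object/approximation argument in the model structure on $K(B,s)$-modules, cutting off a cofibrant resolution of $M$ in high degree once its underlying $B$-complex becomes perfect.

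For the second equivalence, observe that the equivalence $\cohs{B,s}/\cohs{B,s}^{\textup{acy}}\simeq \cohbp{X_0}{X}$ produced above is compatible with the perfect subcategories by construction: an object of $\cohs{B,s}$ is sent to a perfect $K(B,s)$-module (i.e.\ lies in $\perf{X_0}$) if and only if it is homotopy finitely presented as a $K(B,s)$-dg module, which is precisely the definition of $\textup{Perf}^s(B,s)$. Passing to dg quotients in $\dgcatm$ and invoking the definition of $\sing{X,s}$ as the fiber of $\mathfrak{i}_*$ restricted to $\cohbp{X_0}{X}\rightarrow \perf{X}$ (which coincides, on the quotient side, with $\cohbp{X_0}{X}/\perf{X_0}$) concludes the proof.
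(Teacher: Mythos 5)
The paper does not prove this lemma at all: it is imported verbatim from \cite[Lemma 2.33]{brtv} (and the rank-$r$ version later in the text is likewise dispatched with ``the proof of \emph{loc.\ cit.}\ applies mutatis mutandis''), so any proof you give is necessarily your own route. Your overall architecture is reasonable, and your second, ``truncate a resolution once the underlying $B$-complex becomes perfect'' argument for essential surjectivity is essentially the strictification argument that BRTV actually use. The first equivalence via Drinfeld quotients is also fine.

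There is, however, a genuine error in your full-faithfulness step. You claim that because objects of $\cohs{B,s}$ have strictly perfect underlying $B$-complexes and $K(B,s)$ is a two-term complex of flat $B$-modules, the objects are ``homotopy-flat over $K(B,s)$'' and hence ``the mapping complexes in $\cohs{B,s}$ already compute the correct derived $\rhom_{K(B,s)}$.'' Flatness (or K-flatness) is a statement about tensor products and says nothing about $\hm$; what you would need is h-projectivity over $K(B,s)$, and that fails. Concretely, take $s=0$ and $E=F=A$ with the trivial $K(A,0)=(L^{\vee}\xrightarrow{0}A)$-module structure: this is an object of $\cohs{A,0}$, the strict complex $\hm_{K(A,0)}(A,A)$ is just $A$ in degree $0$, but the paper's own Lemma \ref{RHom(A,A)=Sym_A(L[-2])} computes $\rhom_{(L^{\vee}\xrightarrow{0}A)}(A,A)\simeq Sym_A(L[-2])$, which has cohomology in all even non-negative degrees. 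So the strict mapping complexes emphatically do \emph{not} compute the derived ones; the dg localization genuinely enlarges them (through zig-zags of quasi-isomorphisms), and indeed it must, or the lemma would contradict the $Sym_A(L[-2])$ computation. Full faithfulness has to be obtained differently, e.g.\ by embedding $\cohs{B,s}$ into the larger category of all cohomologically bounded, $B$-perfect $K(B,s)$-dg modules (where cofibrant replacements live) and showing that the induced functor on localizations is an equivalence because every object of the larger category is quasi-isomorphic to a strict one — i.e.\ full faithfulness is a consequence of, not independent of, the density/strictification step. Your essential-surjectivity sketch via ``transporting $h$ along the quasi-isomorphism, the obstructions vanish'' is also too optimistic as stated (homotopy transfer a priori yields only an $A_\infty$-module structure, and strictifying it need not preserve strict perfectness of the underlying complex); the truncation argument you offer as an alternative is the one that actually works.
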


\begin{construction}\label{functorial properties cohs line bundle}
We can endow the assignment $(B,s)\rightarrow \cohs{B,s}$ with the structure of a pseudo-functor: let $f:(B,s)\rightarrow (C,t)$ be a morphism of affine twisted LG models, i.e. a morphism of $A$-algebras $f:B\rightarrow C$ such that the induced morphism $id\otimes f:L_B\rightarrow L_C$ sends $s$ to $t$. Then we can define the dg functor
\begin{equation}
    -\otimes_B C :\cohs{B,s}\rightarrow \cohs{C,t}
\end{equation}
\begin{equation*}
    (E,d,h)\mapsto (E\otimes_B C,d\otimes id, h\otimes id)
\end{equation*}
Indeed, it is clear that if $(E,d,h)$ is a $K(B,s)$-dg module whose underling $B$-module is perfect, then the $K(C,t)$-dg module $(E\otimes_BC,d\otimes id,h\otimes id)$ is levelwise $C$-projective and strictly bounded.

This yields a pseudo-functor
\begin{equation}\label{cohs}
    \cohs{\bullet,\bullet}:\textup{LG}^{\textup{aff,op}}_{(S,L)}\rightarrow \dgcatlf.
\end{equation}
\end{construction}
\begin{construction}
We can endow the pseudo-functor $(B,s)\mapsto \cohs{B,s}$ with a weakly associative and weakly unital lax monoidal structure. For any pair of twisted affine LG models $(B,s)$, $(C,t)$, we need to construct a morphism
\begin{equation}
    \cohs{B,s}\otimes \cohs{C,t}\rightarrow \cohs{B\otimes_AC,s\boxplus t}
\end{equation}
For a pair $(B,s)$, let $Z(s)$ denote $Spec(K(B,s))$. Moreover, let \begin{equation*}
    \V(L)=Spec(Sym(L^{\vee})).
\end{equation*} 
Then consider the following diagram
\begin{equation}
    \begindc{\commdiag}[16]
    
    \obj(0,30)[1]{$Z(s)\times^h_{S}Z(t)$}
    \obj(60,30)[2]{$Z(s\boxplus t)$}
    \obj(0,0)[3]{$S$}
    \obj(60,0)[4]{$\V(L)$}
    \obj(120,30)[5]{$Spec(B\otimes_A C)$}
    \obj(120,0)[6]{$\V(L)\times_{S}\V(L)$}
    \obj(60,-30)[7]{$S$}
    \obj(120,-30)[8]{$\V(L)$}
    \mor{1}{2}{$\phi$}
    \mor{1}{3}{$$}
    \mor{2}{4}{$$}
    \mor{3}{4}{$\textup{zero}$}
    \mor{2}{5}{$$}
    \mor{4}{6}{$(id,-id)$}
    \mor{5}{6}{$s\times t$}
    \mor{4}{7}{$$}
    \mor{6}{8}{$+$}
    \mor{7}{8}{$\textup{zero}$}
    \enddc
\end{equation}

where $\phi$ is the morphism corresponding to
\begin{equation}
    \begindc{\commdiag}[16]
    \obj(-100,20)[a]{$K(B,s)\otimes_A K(C,t)$}
    \obj(-50,20)[1]{$L_{B\otimes_AC}^{\vee}$}
    \obj(0,20)[2]{$L_{B\otimes_AC}^{\vee}\oplus L_{B\otimes_AC}^{\vee}$}
    \obj(50,20)[3]{$B\otimes_AC$}
    \obj(-100,-20)[b]{$K(B\otimes_AC,s\boxplus t)$}
    \obj(0,-20)[4]{$L_{B\otimes_AC}^{\vee}$}
    \obj(50,-20)[5]{$B\otimes_AC$}
    \mor{1}{2}{$$}
    \mor{2}{3}{$$}
    \mor{4}{5}{$$}
    \mor{4}{2}{$\begin{bmatrix}
    1\\ 1
    \end{bmatrix}$}
    \mor{5}{3}{$1$}
    \mor{b}{a}{$\phi$}
    \enddc
\end{equation}
\end{construction}

Consider the two projections
\begin{equation}
    \begindc{\commdiag}[20]
    \obj(0,0)[1]{$Z(s)\times^h_{S}Z(t)$}
    \obj(-30,0)[2]{$Z(s)$}
    \obj(30,0)[3]{$Z(t).$}
    \mor{1}{2}{$p_s$}[\atright,\solidarrow]
    \mor{1}{3}{$p_t$}
    \enddc
\end{equation}
Given $(E,d,h)\in \cohs{B,s}$ and $(E',d',h')\in \cohs{C,t}$, define 
\begin{equation}\label{multiplication lax monoidal structure cohs}
    (E,d,h)\boxtimes (E',d',h')=\phi_*\bigl ( p_s^*(E,d,h)\otimes p_t^*(E',d',h') \bigr ).
\end{equation}
This is a strictly perfect complex of $B\otimes_{A}C$-modules: its underlying complex of $A$-modules is $(E,d)\otimes_A(E',d')$, which immediately implies that it is strictly bounded. In order to see that each component $(E\otimes_A E')^n=\oplus_{i=0 }^nE^i\otimes_AE'^{n-i}$ is a projective $B\otimes_A C$-module, the same argument given in \cite[Construction 2.35]{brtv} applies. The morphism
\begin{equation*}
    E\otimes_AE'\rightarrow E\otimes_AE'\otimes_{B\otimes_AC}L_{B\otimes_AC}[-1] 
\end{equation*}
is $h\otimes 1+ 1\otimes h'$. In particular, $\phi_*\bigl ( p_s^*(E,d,h)\otimes p_t^*(E',d',h') \bigr )$ lives in $\cohs{B\otimes_AC,s\boxplus t}$.
The map 
\begin{equation}\label{unit lax monoidal structure cohs}
    \underline{A}\rightarrow \cohs{A,0},
\end{equation}
where $\underline{A}$ is the unit in $\textup{dgCat}^{\textup{lf}}_{A}$, i.e. the dg category with only one object whose endomorphism algebra is $A$, is defined by the assignment
\begin{equation*}
    \bullet \mapsto A.
\end{equation*}
It is clear that (\ref{multiplication lax monoidal structure cohs}) and (\ref{unit lax monoidal structure cohs}) satisfy the associativity and unity axioms, i.e. they enrich the pseudo-functor (\ref{cohs}) with a lax monoidal structure
\begin{equation}\label{cohs lax monoidal}
    \cohs{\bullet,\bullet}^{\otimes}:\textup{LG}_{(S,L)}^{\textup{aff,op},\boxplus}\rightarrow \textup{dgCat}_{A}^{\textup{lf},\otimes}.
\end{equation}

As in \cite[Construction 2.34, Construction 2.37]{brtv}, consider $\textup{Pairs-dgCat}^{\textup{lf}}_{A}$, the category of pairs $(T,F)$, where $T\in \textup{dgCat}^{\textup{lf}}_{A}$ and $F$ is a class of morphisms in $T$. A morphisms $(T,F)\rightarrow (T',F')$ is a dg functor $T\rightarrow T'$ sending $F$ in $F'$. There is an obvious functor $\textup{Pairs-dgCat}^{\textup{lf}}_{A}\rightarrow \textup{dgCat}^{\textup{lf}}_{A}$ defined as $(T,F)\mapsto T$. We say that a morphism $(T,F)\rightarrow (T',F')$ in $\textup{Pairs-dgCat}^{\textup{lf}}_{A}$ is a DK-equivalence if the dg functor $T\rightarrow T'$ is a DK-equivalence in $\textup{dgCat}^{\textup{lf}}_{A}$. Denote this class of morphisms in $\textup{Pairs-dgCat}^{\textup{lf}}_{A}$ by $W_{DK}$.

Also notice that the symmetric monoidal structure on $\textup{dgCat}^{\textup{lf}}_{A}$ induces a symmetric monoidal structure on $\textup{Pairs-dgCat}^{\textup{lf}}_{A}$
\begin{equation}
    (T,F)\otimes (T',F'):= (T\otimes T',F\otimes F').
\end{equation}
It is clearly associative and unital, where the unit is $(\underline{A},\{id\})$. We will denote this symmetric monoidal structure by $\textup{Pairs-dgCat}^{\textup{lf},\otimes}_{A}$. Note that this symmetric monoidal structure is compatible to the class of morphisms in $W_{DK}$, as we are working with locally-flat dg categories.

Then  as in \cite[Construction 2.34, Construction 2.37]{brtv}, we can construct a symmetric monoidal $\oo$-functor
\begin{equation}
    \textup{loc}_{dg}^{\otimes}: \textup{Pairs-dgCat}^{\textup{lf},\otimes}_{A}[W_{DK}^{-1}]\rightarrow \textup{dgCat}^{\textup{lf},\otimes}_{A}[W_{DK}^{-1}]\simeq \dgcatdko
\end{equation}
which, on objects, is defined by sending $(T,F)$ to $T[F^{-1}]_{dg}$, the dg localization of $T$ with respect to $F$. Let $\{q.iso\}$ be the class of quasi-isomorphisms in $\textup{Coh}^s(R_i,s_{R_i})$. Consider the functor
\begin{equation}
    \textup{LG}^{\textup{aff,op},\boxplus}_{A}\rightarrow \textup{Pairs-dgCat}^{\textup{lf},\otimes}_{A}
\end{equation}
\begin{equation*}
    (B,s)\mapsto \bigl (\textup{Coh}^{s}(B,s), \{q.iso\} \bigr )
\end{equation*}

and compose it with the functor
\begin{equation}
\begindc{\commdiag}[18]
\obj(-60,10)[1]{$\textup{Pairs-dgCat}^{\textup{lf},\otimes}_{A}$}
\obj(0,10)[2]{$\textup{Pairs-dgCat}^{\textup{lf},\otimes}_{A}[W_{DK}^{-1}]$}
\obj(60,10)[3]{$\textup{dgCat}^{\textup{lf},\otimes}_{A}[W_{DK}^{-1}]$}
\obj(-40,-10)[4]{$\textup{\textbf{dgCat}}^{\otimes}_{A}$}
\obj(40,-10)[5]{$\textup{\textbf{dgCat}}^{\textup{idm},\otimes}_{A}$}
\mor{1}{2}{$\textup{loc}$}
\mor{2}{3}{$\textup{loc}_{dg}$}
\mor{4}{5}{$\textup{loc}$}
\cmor((58,6)(58,2)(56,0)(10,0)(-36,0)(-38,-2)(-38,-6)) \pdown(0,3){$\simeq$}
\enddc
\end{equation}
We get, after a suitable monoidal left Kan extension, the desired lax monoidal $\oo$-functor
\begin{equation}\label{cohbp monoidal}
    \cohbp{\bullet}{\bullet}^{\otimes}:\textup{LG}^{\textup{op},\boxplus}_{A}\rightarrow \textup{\textbf{dgCat}}^{\textup{idm},\otimes}_{A}.
\end{equation}

\begin{lmm}\label{RHom(A,A)=Sym_A(L[-2])}
Let $S=Spec(A)$ be a regular noetherian ring. Let $L$ be a line bundle over $S$. The following equivalence holds in $\textup{CAlg}(\dgcatmo)$
\begin{equation}\label{equivalence cohb(S_0) perf(R[u])}
    \cohbp{S_0}{S}^{\otimes}\simeq \cohb{S_0}^{\otimes}\xrightarrow{\sim} \perf{Sym_{A}(L[-2])}^{\otimes}.
\end{equation}
\end{lmm}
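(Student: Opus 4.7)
First, to obtain $\cohbp{S_0}{S}^{\otimes} \simeq \cohb{S_0}^{\otimes}$, observe that $\mathfrak{i}:S_0 \hookrightarrow S$ is a proper closed immersion in the noetherian setting, so $\mathfrak{i}_*$ sends $\cohb{S_0}$ into $\cohb{S}$. Since $A$ is regular, $\cohb{S} = \perf{S}$ by Auslander--Buchsbaum--Serre, so every $\mathfrak{i}_*E$ with $E \in \cohb{S_0}$ is already perfect. Hence $\cohbp{S_0}{S} = \cohb{S_0}$, and this identity upgrades trivially to a symmetric monoidal equivalence for the $\boxplus$-induced structure.

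Next, by Example \ref{derived zero locus zero section}, $S_0 = Spec(R)$ for $R = (\Lc^{\vee} \xrightarrow{0} A)$, which as a graded commutative dg algebra coincides with the trivial square-zero extension $A \oplus \Lc^{\vee}[1]$ (the product $\Lc^{\vee}[1] \cdot \Lc^{\vee}[1]$ vanishes because $\Lc$ has rank one). Viewing $\Oc_S = A$ as an $R$-module via the augmentation $R \to A$, the key computation is
\begin{equation*}
\End_R(A) \simeq Sym_A(\Lc[-2]).
\end{equation*}
I would establish this using the minimal Koszul-type free resolution $R \otimes_A Sym_A(\Lc^{\vee}[2]) \twoheadrightarrow A$; applying $\textup{Hom}_R(-,A)$ yields a dg algebra whose cohomology is $\bigoplus_n \Lc^{\otimes n}$ concentrated in even cohomological degrees $2n$, i.e.\ $Sym_A(\Lc[-2])$. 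Formality is automatic because higher Massey products would land in odd cohomological degrees, all of which vanish.

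I then claim $A$ classically generates $\cohb{S_0}$. For any coherent $R$-module $M$, the short exact sequence $\Lc^{\vee}[1]\cdot M \hookrightarrow M \twoheadrightarrow M/\Lc^{\vee}[1]\cdot M$ has outer terms annihilated by $\Lc^{\vee}[1]$ (since its square vanishes in $R$), hence they are coherent $A$-modules. Because $A$ is regular, finitely generated $A$-modules are perfect over $A$, so they belong to the thick subcategory of $\cohb{A}$ generated by $A$, and via restriction along $R \to A$ to the thick subcategory of $\cohb{S_0}$ generated by $A$. Combined with the endomorphism computation, the functor $\textup{RHom}_R(A,-):\cohb{S_0}\to \perf{Sym_A(\Lc[-2])}$ is then an equivalence of stable $\oo$-categories.

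Finally, for the symmetric monoidal upgrade: the unit of $\cohbp{S_0}{S}^{\otimes}$ prescribed by Construction \ref{functorial properties cohs line bundle} is $A$ (since $(S,0)\boxplus(S,0) = (S,0)$ and the lax monoidal structure sends the monoidal unit to $A$), and its endomorphism algebra $\End_R(A) \simeq Sym_A(\Lc[-2])$ acquires an $\mathbb{E}_\infty$-structure from the ambient symmetric monoidal $\oo$-category. The abstract principle that a stable presentable symmetric monoidal $\oo$-category whose unit is a compact generator is symmetric monoidally equivalent to modules over $\End(\mathbb{1})$ (Schwede--Shipley type, cf.\ \cite[\S 7.1.2]{ha}) then yields $\cohb{S_0}^{\otimes} \simeq \perf{Sym_A(\Lc[-2])}^{\otimes}$. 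The main difficulty lies here: one must verify that the ``$\boxplus$-convolution'' tensor product on $\cohbp{S_0}{S}^{\otimes}$, defined via the diagonal $\phi:S_0 \times^h_S S_0 \to S_0$ coming from $(S,0) \to (S,0)\boxplus(S,0)=(S,0)$, is indeed the one inherited from the $\mathbb{E}_\infty$-structure on $\End(\mathbb{1})$; once this compatibility is in hand the equivalence follows formally.
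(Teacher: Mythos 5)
Your proposal follows the same route as the paper's proof: use regularity of $A$ to identify $\cohbp{S_0}{S}$ with $\cohb{S_0}$, identify $S_0$ with the spectrum of the square-zero extension $\Lc^{\vee}\xrightarrow{0}A$, show that $A$ generates $\cohb{S_0}$, compute $\rhom_R(A,A)\simeq Sym_A(\Lc[-2])$ from the Koszul-type resolution, and then pass to modules over the endomorphism algebra. Two remarks on where you diverge. First, your generation argument (filtering a coherent $R$-module by the square-zero ideal and using regularity of $A$ to land in the thick subcategory generated by $A$) is actually more complete than the paper's, which only checks weak generation by producing a nonzero map $A\to M$ out of a nonzero cohomology class; your version is the one that directly justifies the equivalence $\cohb{S_0}\simeq\perf{\rhom_R(A,A)}$.

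Second, the step you flag as "the main difficulty" — that the $\boxplus$-convolution product on $\cohb{S_0}$ agrees, under Schwede--Shipley, with the tensor product of $Sym_A(\Lc[-2])$-modules — is precisely the point where the paper does its non-formal work, and your proposal leaves it unresolved. The paper does not invoke the abstract "unit is a compact generator" principle at all: instead it works with the strict model $\cohs{S,0}$ from Construction \ref{cohs section line bundle}, writes down the dg functor $\rhom_{(L^{\vee}\xrightarrow{0}A)}(A,-)$ explicitly on triples $(E,d,h)$ (in degrees $i$, $i+1$ it is $\bigoplus_n E_{i-2n}\otimes_A L^{\otimes n}\xrightarrow{d+h}\bigoplus_n E_{i+1-2n}\otimes_A L^{\otimes n}$), and checks by hand, as in \cite[Lemma 2.39]{brtv}, that this strict functor is symmetric monoidal for the convolution structure and preserves quasi-isomorphisms; localizing then yields the symmetric monoidal equivalence with no further compatibility to verify. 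So your outline is sound, but to close it you should either carry out this explicit check on the strict model or supply an independent verification of the compatibility you postponed; as written, the monoidal half of the statement is asserted rather than proved.
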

\begin{proof}
The first equivalence is an immediate consequence of the regularity hypothesis on $S$.
As we have remarked in the Example (\ref{derived zero locus zero section}), the cdga assocaiated via the Dold-Kan correspondence to $S_0$ is $L^{\vee}\xrightarrow{0} A$. By definition, $A$ be a compact generator of $\perf{S}$. Also by definition, $L^{\vee}\xrightarrow{0}A$ is a compact generator of $\perf{S_0}$. It is easy to see that $A$, with the trivial $(L^{\vee}\xrightarrow{0}A)$-dg module structure, is a generator of $\cohb{S_0}$: if $M$ is a nonzero object in $\cohb{S_0}$, then there exists $0\neq m \in \textup{H}^i(M)$ for some $i$. We can moreover assume that $i=0$. Then $m$ induces a non-zero morphism $A\rightarrow M$.

In particular, we get the equivalence 
\begin{equation*}
\perf{\rhom_{(L^{\vee}\xrightarrow{0}A)}(A,A)}\simeq \cohb{S_0}.
\end{equation*}
The endomorphism dg algebra $\rhom_{(L^{\vee}\xrightarrow{0}A)}(A,A)$ can be computed explicitly by means of the following $L^{\vee}\xrightarrow{0}A$-resolution of $A$:
\begin{equation}
    \dots \underbracket{L^{\vee \otimes 2}}_{-3}\xrightarrow{0}\underbracket{L^{\vee}}_{-2}\xrightarrow{1}\underbracket{L^{\vee}}_{-1}\xrightarrow{0}\underbracket{A}_{0}.
\end{equation}
Applying $\hm_{A}(-,A)$ we obtain
\begin{equation}
\underbracket{A}_{0}\xrightarrow{0}\underbracket{\hm_{A}(L^{\vee},A)}_{1}\xrightarrow{1}\underbracket{\hm_{A}(L^{\vee},A)}_{2}\xrightarrow{0}\underbracket{\hm_{A}(L^{\vee \otimes 2},A)}_{3}\dots
\end{equation}
\begin{equation*}
\simeq
    \underbracket{A}_{0}\xrightarrow{0}\underbracket{L}_{1}\xrightarrow{1}\underbracket{L}_{2}\xrightarrow{0}\underbracket{L^{\otimes2}}_{3}\dots
\end{equation*}
which is quasi isomorphic to $A$. However, when we ask for $(L^{\vee}\xrightarrow{0}A)$-linearity, the copies of $L^{\otimes n}$ in odd degree disappear, as the local generators $\varepsilon$ in degree $-1$ of $(L^{\vee}\xrightarrow{0}A)$ act via the identity on $(L^{\vee}\xrightarrow{0}A)$. Therefore we find that $\rhom_{(L^{\vee}\xrightarrow{0} A)}(A,A)$ is quasi-isomorphic to
\begin{equation}
    \underbracket{A}_{0}\rightarrow \underbracket{0}_{1}\rightarrow \underbracket{L}_{2}\rightarrow \dots \simeq Sym_{A}(L[-2]).
\end{equation}
This shows that there exists an equivalence
\begin{equation}\label{equivalence modules}
    \rhom_{(L^{\vee}\xrightarrow{0}A)}(A,A)\simeq Sym_{A}(L[-2]) 
\end{equation}
as $(L^{\vee}\xrightarrow{0}A)$-dg modules. Notice that both these objects carry a canonical algebra structure. 
In order to conclude that the two commutative algebra structures coincide, we consider the dg functor
\begin{equation}
    \rhom_{(L^{\vee}\xrightarrow{0}A)}(A,-): \cohs{S,0}\rightarrow Sym_{A}(L[-2])-dgmod.
\end{equation}

Notice that $Sym_{A}(L[-2])$ is a strict cdga, seen as a commutative algebra object in $\qcoh{A}$.
Similarly to \cite[Lemma 2.39]{brtv}, for $(E,d,h)$ an object in $\cohs{A,0}$, the $Sym_A(L[-2])$-dg module $\rhom_{(L^{\vee}\xrightarrow {0}A)}(A,(E,d,h))$ can be computed (in degrees $i$ and $i+1$) as
\begin{equation}
    \bigoplus_{n\geq 0}E_{i-2n}\otimes_A L^{\otimes n} \xrightarrow{d+h} \bigoplus_{n\geq 0}E_{i+1-2n}\otimes_A L^{\otimes n}.
\end{equation}
The same arguments given in \textit{loc.cit.} hold mutatis mutandis in our situation and therefore we obtain a symmetric monoidal functor
\begin{equation}
      \cohs{S,0}^{\otimes}\rightarrow Sym_{A}(L[-2])-dgmod^{\otimes}
\end{equation}
which preserves quasi-isomorphisms. If we localize both the l.h.s. and the r.h.s. we thus obtain a symmetric monoidal $\oo$-functor
\begin{equation}
    \cohb{S_0}^{\otimes}\rightarrow \qcoh{Sym_A(L[-2])}^{\otimes},
\end{equation}
from which one recovers the equivalence of symmetric monoidal dg categories
\begin{equation}
    \cohb{S_0}^{\otimes}\simeq \perf{Sym_A(L[-2])}^{\otimes}.
\end{equation}
\end{proof}
\begin{cor}
Let $S=Spec(A)$ be a regular noetherian affine scheme and let $L$ be a line bundle over $S$. The lax monoidal $\oo$-functors $(\ref{cohbp monoidal})$ factors as
\begin{equation}\label{cohbp monoidal modules 2}
    \cohbp{\bullet}{\bullet}^{\otimes}:\textup{LG}_{(S,L)}^{\textup{op},\boxplus}\rightarrow \md_{\perf{Sym_{A}(L[-2])}}(\dgcatm)^{\otimes}.
\end{equation}
\end{cor}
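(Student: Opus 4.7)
The plan is to deduce the factorization as a formal consequence of the lax monoidal structure on $\cohbp{\bullet}{\bullet}^{\otimes}$ together with the identification of Lemma \ref{RHom(A,A)=Sym_A(L[-2])}. The guiding principle is the standard fact that any lax symmetric monoidal $\oo$-functor $F : \mathcal{C}^{\otimes}\rightarrow \mathcal{D}^{\otimes}$ sends the unit $1_{\mathcal{C}}$ to a commutative algebra object $F(1_{\mathcal{C}})\in \textup{CAlg}(\mathcal{D})$, and that for every $c\in \mathcal{C}$ the lax monoidal structure map $F(1_{\mathcal{C}})\otimes F(c)\rightarrow F(1_{\mathcal{C}}\otimes c)\simeq F(c)$ promotes $F(c)$ to an $F(1_{\mathcal{C}})$-module functorially in $c$. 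This assembles into a lax monoidal refinement $\mathcal{C}^{\otimes}\rightarrow \md_{F(1_{\mathcal{C}})}(\mathcal{D})^{\otimes}$ whose composition with the forgetful functor recovers $F$.

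First, I would apply this principle to $F=\cohbp{\bullet}{\bullet}^{\otimes}$ as defined in (\ref{cohbp monoidal}). The unit of the symmetric monoidal category $\textup{LG}_{(S,L)}^{\textup{op},\boxplus}$ is $(S,0)$, where $0$ denotes the zero section of $L$, since by construction $(X,s_X)\boxplus (S,0)\simeq (X,s_X)$. Hence $\cohbp{S_0}{S}$ is canonically a commutative algebra object in $\dgcatm$, and every $\cohbp{X_0}{X}$ is naturally a module over it, with the module structure varying functorially in $(X,s_X)\in \textup{LG}_{(S,L)}$.

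Next, I would invoke Lemma \ref{RHom(A,A)=Sym_A(L[-2])}, which identifies this commutative algebra object with $\perf{Sym_A(L[-2])}$; crucially, the lemma is established at the level of symmetric monoidal dg categories, so the identification is an equivalence in $\textup{CAlg}(\dgcatmo)$, not merely of underlying objects. Transporting the module structure along this equivalence gives, for each $(X,s_X)$, the structure of an object in $\md_{\perf{Sym_{A}(L[-2])}}(\dgcatm)$ on $\cohbp{X_0}{X}$, functorially and compatibly with the tensor products $\boxtimes$ of Construction \ref{strict model for the derived zero locus}. This yields the desired lax monoidal factorization (\ref{cohbp monoidal modules 2}).

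The only non-formal input is the identification of $\cohbp{S_0}{S}^{\otimes}$ with $\perf{Sym_A(L[-2])}^{\otimes}$ as a commutative algebra, which was the content of the preceding lemma; the rest of the argument is a direct application of the universal property of modules over the image of the unit under a lax symmetric monoidal $\oo$-functor, so I do not expect any genuine obstacle beyond keeping track of the coherence data, which is packaged by working in the $\oo$-categorical framework.
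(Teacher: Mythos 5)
Your proposal is correct and is exactly the argument the paper intends: the corollary is stated without proof as an immediate consequence of Lemma \ref{RHom(A,A)=Sym_A(L[-2])}, the point being precisely that a lax symmetric monoidal $\oo$-functor factors through modules over the image of the unit $(S,0)$, which the lemma identifies with $\perf{Sym_{A}(L[-2])}$ in $\textup{CAlg}(\dgcatmo)$. Your identification of the unit and your appeal to the symmetric monoidal (not merely plain) equivalence of the lemma are both the right points to emphasize.
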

Then, for a noetherian regular scheme $S$ with a line bundle $\Lc_S$, we obtain a lax monoidal $\oo$-functor
\begin{equation}\label{cohbp non affine}
    \cohbp{\bullet}{\bullet}^{\otimes}:\tlgm{S}^{\textup{op},\otimes}\rightarrow \dgcatmo
\end{equation}
as the limit
\begin{equation}
    \varprojlim_{(Spec(A),\Lc_{S|Spec(A)})}\bigl ( \cohbp{\bullet}{\bullet}^{\otimes}:\textup{LG}_{Spec(A),\Lc_{S|Spec(A)}}^{\textup{op},\boxplus}\rightarrow \textup{\textbf{dgCat}}_A^{\textup{idm},\otimes}\bigr )
\end{equation}
where $Spec(A)\rightarrow S$ is a Zariski open subscheme. We have used Lemma \ref{zariski descent tlgmm} and the definition
\begin{equation}
    \varprojlim_{Spec(A)\rightarrow S}\textup{\textbf{dgCat}}_A^{\textup{idm},\otimes}= \dgcatmo.
\end{equation}
\begin{rmk}\label{actions}
The monoidal structure on $\cohbp{\bullet}{\bullet}^{\otimes}$ implies that each dg category $\cohbp{X_0}{X}$ is endowed with an action of $\cohb{S_0}$ (recall that we are assuming that $S$ is regular). Similarly to \cite[Remark 2.38]{brtv} we can describe this action. Consider the diagram
\begin{equation}
    \begindc{\commdiag}[20]
    \obj(20,30)[1]{$X_0\times^h_S S_0$}
    \obj(0,15)[2]{$X_0$}
    \obj(70,30)[3]{$X_0$}
    \obj(50,15)[4]{$X$}
    \obj(20,0)[5]{$S_0$}
    \obj(0,-15)[6]{$S$}
    \obj(70,0)[7]{$S$}
    \obj(50,-15)[8]{$\V(\Lc_S)$}
    \mor{1}{2}{$p_{X_0}$}[\atright,\solidarrow]
    \mor{1}{3}{$a$}
    \mor{2}{4}{$\mathfrak{i}$}
    \mor{3}{4}{$\mathfrak{i}$}
    \mor{5}{6}{$$}
    \mor{5}{7}{$$}
    \mor{6}{8}{$\textup{zero}$}
    \mor{7}{8}{$\textup{zero}$}
    \mor{1}{5}{$$}
    \obj(14,9)[a]{$p_{S_0}$}
    \mor{2}{6}{$$}
    \mor{3}{7}{$$}
    \mor{4}{8}{$$}
    \enddc
\end{equation}
Notice that $X_0\times^{h}_S S_0\simeq X_0\times^h_X X_0$. Given $\Ec \in \cohbp{X_0}{X}$ and $\Fc \in \cohb{S_0}$, then $\Ec \boxtimes \Fc= a_*(p_{X_0}^* \Ec \otimes p_{S_0}^*\Fc)$. In particular, when $\Fc=\Lc_S^{\vee}\xrightarrow{0}\Oc_S=\Oc_{S_0}$, then
\begin{equation}
    \Ec \boxtimes (\Lc_S^{\vee}\xrightarrow{0}\Oc_S)=a_*(p_{X_0}^* \Ec\otimes p_{S_0}^*\Oc_{S_0})\simeq a_*(p_{X_0}^* \Ec\otimes a^*\Oc_{X_0})
\end{equation}
\begin{equation*}
    \underbracket{\simeq}_{\textup{proj. form.}}a_*p_{X_0}^*\Ec \underbracket{\simeq}_{\textup{der. prop. base change}}\mathfrak{i}^*\mathfrak{i}_*\Ec.
\end{equation*}
When $\Fc=\Oc_S=t_*\Oc_S$, where $t:S=\pi_0(S_0)\rightarrow S_0$ is the truncation morphism, the (homotopy) cartesian square
\begin{equation}
    \begindc{\commdiag}[20]
    \obj(0,15)[1]{$X_0\times_{S}S$}
    \obj(50,15)[2]{$X_0\times_S^hS_0$}
    \obj(0,-15)[3]{$S$}
    \obj(50,-15)[4]{$S_0$}
    \mor{1}{2}{$id\times t $}
    \mor{1}{3}{$q$}
    \mor{2}{4}{$p_{S_0}$}
    \mor{3}{4}{$t$}
    \enddc
\end{equation}
implies that
\begin{equation}
    \Ec\boxtimes \Oc_S=a_*(p_{X_0}^*\Ec\otimes p_{S_0}^*t_*\Oc_S)\underbracket{\simeq}_{\textup{der. prop. base change}} a_*(p_{X_0}^*\Ec\otimes (id\times t)_*\Oc_{X_0})
\end{equation}
\begin{equation*}
    \underbracket{\simeq}_{\textup{proj. form.}}a_*(id\times t)_*((id\times t)^*p_{X_0}^*\Ec)\simeq \Ec.
\end{equation*}
Finally, we can consider $\Lc^{\vee}_S$ endowed with the trivial $\Lc^{\vee}_S\xrightarrow{0}\Oc_S$ dg module structure, namely $t_*\Lc^{\vee}_S$. Then, for any $\Ec \in \cohbp{X_0}{X}$, we have
\begin{equation}
    \Ec \boxtimes t_*\Lc^{\vee}_S=a_*(p_{X_0}^*\Ec\otimes p_{S_0}^*t_*\Lc_S^{\vee})\underbracket{\simeq}_{\text{der. prop. base change}} a_*(p_{X_0}^*\Ec\otimes (id\times t)_*q^*\Lc_S^{\vee}).
\end{equation}
Notice that $q^*\Lc^{\vee}_S\simeq \Lc^{\vee}_X\otimes_{\Oc_X}\Oc_{X_0}$, thus we can continue
\begin{equation}
    \simeq  a_*(p_{X_0}^*\Ec\otimes (id\times t)_*(\Lc^{\vee}_X\otimes_{\Oc_X}\Oc_{X_0}))\simeq \Ec\otimes_{\Oc_{X_0}}\Oc_{X_0}\otimes_{\Oc_X}\Lc^{\vee}_X\simeq \Ec \otimes_{\Oc_X}\Lc_X^{\vee}.
\end{equation}
\end{rmk}
We will construct, using $\cohbp{\bullet}{\bullet}^{\otimes}$, a lax monoidal $\oo$-functor \begin{equation}
    \sing{\bullet,\bullet}^{\otimes}:\textup{LG}_{(S,\Lc_S)}^{\textup{op},\boxplus}\rightarrow \dgcatmo,
\end{equation} 
following \cite{pr11} and \cite{brtv}. 
\begin{construction}
As in Lemma \ref{RHom(A,A)=Sym_A(L[-2])}, consider the strict commutative dg algebra $Sym_{\Oc_S}(\Lc_S[-2])$ as a commutative algebra object in $\qcoh{S}$. Consider the $Sym_{\Oc_S}(\Lc_S[-2])$-algebra
\begin{equation}
    \Rc:=Sym_{\Oc_S}(\Lc_S[-2])[\nu^{-1}]
\end{equation}
where $\nu$ is the generator in degree $2$. More explicitly, 
\begin{equation}
    \Rc= \dots \rightarrow \underbracket{\Lc_S^{\vee,\otimes 2}}_{-4} \rightarrow \underbracket{\Lc_S^{\vee}}_{-2}\rightarrow 0\rightarrow \underbracket{\Oc_S}_{0}\rightarrow 0 \rightarrow \underbracket{\Lc_S}_{2}\rightarrow 0 \rightarrow \underbracket{\Lc_S^{\otimes 2}}_{4} \rightarrow \dots
\end{equation}
Then we have a symmetric monoidal $\oo$-functor $-\otimes_{\perf{Sym_{\Oc_S}(\Lc_S[-2])}}\perf{\Rc}$ 
\begin{equation}\label{invert u}
\md_{\perf{Sym_{\Oc_S}(\Lc_S[-2])}}(\dgcatm)^{\otimes}
\rightarrow \md_{\perf{\Rc}}(\dgcatm)^{\otimes}.
\end{equation}
Composing it with $(\ref{cohbp monoidal modules 2})$ we obtain a lax monoidal $\oo$-functor
\begin{equation}\label{cohbp otimes perf R[u,u^{-1}]}
    \tlgm{S}^{\textup{op},\boxplus}\rightarrow \md_{\perf{\Rc}}(\dgcatm)^{\otimes}
\end{equation}
which, at the level of objects, is defined by
\begin{equation*}
    (X,s_X)\mapsto \cohbp{X_0}{X}\otimes_{\perf{Sym_{\Oc_S}(\Lc[-2])}}\perf{\Rc}.
\end{equation*}
\end{construction}
\begin{rmk}
Let $\mathcal{U}=\{ U_i= Spec(A_i) \}$ be a Zariski affine covering of $S$, such that $\Lc_{S|U_i}$ is equivalent to $A_i$. Then the restriction of $\Rc$ to $U_i$ is equivalent to the dg algebra $A_i[u,u^{-1}]$, where $u$ sits in cohomological degree $2$.
\end{rmk}
\begin{lmm} Let $\mathcal{U}=\{U_i=Spec(B_i)\}$ be an affine open covering which trivializes $\Lc_S$ and $\Lc_S^{\vee}$.

Under the equivalence $(\ref{equivalence cohb(S_0) perf(R[u])})$, the full sub-category $\perf{S_0}$ of $\cohb{S_0}$ corresponds to the full sub-category of $\perf{Sym_{\Oc_S}(\Lc_S[-2])}$ spanned by perfect $Sym_{\Oc_S}(\Lc_S[-2])$-modules whose restriction to each $U_i$ is a $u$-torsion perfect $B_i[u]$-dg module, that we will denote $\perf{Sym_{\Oc_S}(\Lc_S[-2])}^{\textup{tors}}$.
\end{lmm}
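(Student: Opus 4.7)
The plan is to reduce to the affine local setting where $\Lc_S$ is trivialized, identify the image of the compact generator $\Oc_{S_0}$ of $\perf{S_0}$ under the equivalence, and then invoke a standard localization-sequence argument.

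Since the statement concerns full subcategories of $\perf{Sym_{\Oc_S}(\Lc_S[-2])}$ after restriction to each $U_i$ of the trivializing cover, I first work on an affine open $U_i = Spec(B_i)$; there the equivalence of the previous lemma specializes to $\cohb{K(B_i,0)} \simeq \perf{B_i[u]}$, with $K(B_i,0) = B_i[\varepsilon]/(\varepsilon^2)$, $|\varepsilon| = -1$ and $|u| = 2$. The central step is to compute the image of $\Oc_{S_0}|_{U_i} = K(B_i,0)$: representing it in $\cohs{B_i, 0}$ as the triple $(E, d, h)$ with $E = B_i \oplus B_i[-1]$, $d = 0$ and $h|_{E^0}$ the canonical isomorphism $B_i \to B_i$, and applying the formula for $\rhom_{K(B_i, 0)}(B_i, -)$ from the preceding construction (or equivalently computing $\textup{Ext}^*_{K(B_i, 0)}(B_i, K(B_i, 0))$ via a free $K(B_i, 0)$-resolution of $B_i$), one finds that the resulting $B_i[u]$-complex is acyclic away from cohomological degree $-1$, where its cohomology is $B_i$ with trivial $u$-action. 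In particular, $\Oc_{S_0}|_{U_i}$ corresponds to a $u$-torsion perfect $B_i[u]$-module.

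Since $\Oc_{S_0}|_{U_i}$ is a compact generator of $\perf{S_0}|_{U_i}$, the image of $\perf{S_0}|_{U_i}$ under the equivalence is the thick subcategory of $\perf{B_i[u]}$ generated by $B_i[-1]$, equivalently (by shift-invariance) by $B_i = B_i[u]/u$. The standard Thomason--Neeman localization sequence
\[
\perf{B_i[u]}^{u\text{-tors}} \hookrightarrow \perf{B_i[u]} \xrightarrow{-\otimes_{B_i[u]} B_i[u, u^{-1}]} \perf{B_i[u, u^{-1}]}
\]
identifies this thick closure with exactly the subcategory of $u$-torsion perfect $B_i[u]$-modules, which completes the proof.

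The main obstacle is the central computation. One must be careful with shift conventions between $\Lc_S$ and $\Lc_S^\vee$, and it is essential to use the non-trivial $h$ of the strict model of $\Oc_{S_0}$: this produces the cancellations between summands of $\bigoplus_n E^{\bullet - 2n} \otimes L^{\otimes n}$ that collapse the complex to its single surviving cohomology in degree $-1$.
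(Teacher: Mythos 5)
Your argument is correct, but it takes a more self-contained route than the paper at the key affine step. Both proofs begin the same way: restrict along the trivializing cover $\mathcal{U}$, use the fact that perfection is Zariski-local, and reduce to the statement that for $U_i=Spec(B_i)$ with $\Lc_{S}|_{U_i}$ trivial, the equivalence $\cohb{U_{i,0}}\simeq \perf{B_i[u]}$ matches $\perf{U_{i,0}}$ with $\perf{B_i[u]}^{u\textup{-tors}}$. At that point the paper simply cites \cite[Proposition 2.43]{brtv} as a black box, whereas you re-derive that affine statement: you compute the image of the compact generator $\Oc_{U_{i,0}}=K(B_i,0)$ under $\rhom_{K(B_i,0)}(B_i,-)$, find it is $B_i$ concentrated in a single degree with trivial $u$-action (the degree is $\pm 1$ depending on conventions, which is immaterial since $u$-torsion objects are closed under shift), observe that exactness of the equivalence carries the thick closure of the generator to the thick closure of its image, and identify the thick closure of $B_i[u]/u$ with $\perf{B_i[u]}^{u\textup{-tors}}$ via the Thomason--Neeman localization sequence for $-\otimes_{B_i[u]}B_i[u,u^{-1}]$ --- which is essentially the content of the cited BRTV proposition. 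What your approach buys is transparency: it makes visible why the generator becomes torsion (the cancellation forced by the nontrivial $h$ in the strict model), at the cost of redoing a computation the paper outsources. The only points worth tightening are (i) an explicit remark that $\perf{}$ of a derived affine scheme is the thick closure of its structure sheaf, which is what licenses the reduction to the generator, and (ii) a word on why the kernel of the localization to $\perf{B_i[u,u^{-1}]}$ is generated by $\textup{cone}(u)\simeq B_i[u]/u$ even though $u$ sits in cohomological degree $2$; both are standard and neither is a gap.
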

\begin{proof}
Let $\Mcor\in \perf{S_0}$. By definitions this means that, for every Zariski open $f:Spec(A)\rightarrow S_0$, the pullback $f^*\Mcor\in \perf{A}$. Consider our covering $\mathcal{U}$. Then, for every $i\in I$, $\Mcor_{|U_i}$ is a perfect $B_i$-module. By the same argument given above, we get equivalences $\cohb{U_{i,0}}^{\otimes}\simeq \perf{B_i[u]}^{\otimes}$ and the argument provided in \cite[Proposition 2.43]{brtv} guarantees that $\perf{U_{i,0}}$ corresponds to $\perf{B_i[u]}^{u-\textup{tors}}$ under this equivalence. This shows that every perfect complex is locally of $u$-torsion. The converse follows by the local characterization of perfect complexes and by the characterization given in \cite[Proposition 2.43]{brtv}.
\end{proof}
\begin{lmm}
The dg quotient $\cohb{S_0}\rightarrow \sing{S_0}$ corresponds to the base-change 
\begin{equation}
  -\otimes_{Sym_{\Oc_S}(\Lc_S[-2])}\Rc : \perf{Sym_{\Oc_S}(\Lc_S[-2])}\rightarrow \perf{\Rc}
\end{equation}
under the equivalence $(\ref{equivalence cohb(S_0) perf(R[u])})$
\end{lmm}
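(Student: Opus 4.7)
My plan is to combine the preceding two lemmas with a (Zariski local) identification of a dg quotient of categories of perfect modules with the category of perfect modules over the localization at $u$. By Lemma \ref{RHom(A,A)=Sym_A(L[-2])} we have an equivalence $\cohb{S_0}^{\otimes}\simeq \perf{Sym_{\Oc_S}(\Lc_S[-2])}^{\otimes}$, and the immediately preceding lemma identifies the full subcategory $\perf{S_0}$ with $\perf{Sym_{\Oc_S}(\Lc_S[-2])}^{\textup{tors}}$. Thus $\sing{S_0}$ is equivalent to the dg quotient $\perf{Sym_{\Oc_S}(\Lc_S[-2])}/\perf{Sym_{\Oc_S}(\Lc_S[-2])}^{\textup{tors}}$ in $\dgcatm$, and it will be enough to identify this quotient with $\perf{\Rc}$, in such a way that the quotient functor corresponds to $-\otimes_{Sym_{\Oc_S}(\Lc_S[-2])}\Rc$.

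First, I would observe that the base-change $\oo$-functor $-\otimes_{Sym_{\Oc_S}(\Lc_S[-2])}\Rc$ annihilates every $u$-torsion perfect module: Zariski locally this is just the statement that a $u$-torsion perfect $B_i[u]$-module becomes zero after inverting $u$. Hence, by the universal property of the dg quotient in $\dgcatm$, I obtain a canonical $\oo$-functor
\begin{equation*}
\Phi : \sing{S_0}\simeq \perf{Sym_{\Oc_S}(\Lc_S[-2])}/\perf{Sym_{\Oc_S}(\Lc_S[-2])}^{\textup{tors}}\longrightarrow \perf{\Rc},
\end{equation*}
through which the base-change factors, and this is exactly the candidate for the asserted equivalence.

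Next, I would show that $\Phi$ is an equivalence by working Zariski locally and then gluing. Fix an affine open covering $\{U_i=Spec(B_i)\}$ of $S$ trivializing $\Lc_S$; over each $U_i$ the claim reduces to the equivalence
\begin{equation*}
\perf{B_i[u]}/\perf{B_i[u]}^{u\textup{-tors}}\xrightarrow{\sim}\perf{B_i[u,u^{-1}]}
\end{equation*}
induced by $-\otimes_{B_i[u]}B_i[u,u^{-1}]$. This is a consequence of Thomason's localization theorem for perfect complexes and is precisely the local statement established in the proof of \cite[Proposition 2.45]{brtv}. Hence $\Phi_{|U_i}$ is an equivalence for every $i$.

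The last step, which I expect to be the main obstacle, is the gluing. Both functors $U\mapsto \sing{(U)_0}$ and $U\mapsto \perf{\Rc_{|U}}$ satisfy Zariski descent on affine opens of $S$ (the former by the descent properties underlying $(\ref{cohbp non affine})$ together with Lemma \ref{zariski descent tlgmm}, the latter by the standard descent for perfect modules over a quasi-coherent sheaf of cdgas), and $\Phi$ is obtained from a lax monoidal construction that is natural in the base, so it is compatible with restriction. Assembling the local equivalences in $\dgcatm$ then promotes $\Phi$ to a global equivalence. The delicate part is checking the coherence of the identifications in the $\oo$-categorical setting, but this follows formally from the fact that the base change $-\otimes_{Sym_{\Oc_S}(\Lc_S[-2])}\Rc$ is itself the Zariski localization of its local avatars $-\otimes_{B_i[u]}B_i[u,u^{-1}]$.
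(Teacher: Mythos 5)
Your proposal is correct and follows essentially the same route as the paper: the paper's proof is a one-line invocation of Zariski descent for the categories involved, the previous lemmas, and \cite[Proposition 2.43]{brtv}, which is exactly the local identification $\perf{B_i[u]}/\perf{B_i[u]}^{u\textup{-tors}}\simeq \perf{B_i[u,u^{-1}]}$ that you spell out together with the gluing step. The only cosmetic difference is that the relevant reference for the local statement is \cite[Proposition 2.43]{brtv} rather than Proposition 2.45.
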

\begin{proof}
This follows by the fact that the categories involved satisfy Zariski descent, by the previous lemmas and by \cite[Proposition 2.43]{brtv}.
\end{proof}
\begin{lmm}
There is an equivalence of dg categories
\begin{equation}
    \cohbp{X_0}{X}\otimes_{\perf{Sym_{\Oc_S}(\Lc_S[-2])}}\perf{Sym_{\Oc_S}(\Lc_S[-2])}^{\textup{tors}}\simeq \perf{X_0}.
\end{equation}
\end{lmm}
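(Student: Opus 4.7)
My plan is to construct a natural candidate functor and then reduce the verification to the affine, trivial-line-bundle case already handled in \cite{brtv}.

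\textbf{Step 1: Build the candidate.} The lax monoidal structure (\ref{cohbp monoidal}) makes $\cohbp{X_0}{X}$ into a module in $\dgcatm$ over $\cohb{S_0} \simeq \perf{Sym_{\Oc_S}(\Lc_S[-2])}$. By the previous lemma, the subcategory $\perf{Sym_{\Oc_S}(\Lc_S[-2])}^{\textup{tors}}$ corresponds under this equivalence to $\perf{S_0} \subseteq \cohb{S_0}$. By Remark \ref{actions}, the action of $\Oc_{S_0}$ on $\Ec \in \cohbp{X_0}{X}$ is $\mathfrak{i}^*\mathfrak{i}_*\Ec$; since $\mathfrak{i}_*\Ec \in \perf{X}$ by the definition of $\cohbp{X_0}{X}$ and $\mathfrak{i}^*$ preserves perfect complexes, this lies in $\perf{X_0}$. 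Since $\Oc_{S_0}$ thickly generates $\perf{S_0}$ inside $\cohb{S_0}$ and the action is exact and $\cohb{S_0}$-linear, the full action of $\perf{S_0}$ on $\cohbp{X_0}{X}$ factors through $\perf{X_0}$. This yields the candidate functor
\begin{equation*}
    F: \cohbp{X_0}{X} \otimes_{\perf{Sym_{\Oc_S}(\Lc_S[-2])}} \perf{Sym_{\Oc_S}(\Lc_S[-2])}^{\textup{tors}} \longrightarrow \perf{X_0}.
\end{equation*}

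\textbf{Step 2: Zariski-local reduction.} To show $F$ is an equivalence it suffices to work locally on $S$, since both sides satisfy Zariski descent: the right-hand side because $\perf{-}$ is a Zariski sheaf of dg categories on $X$, and the left-hand side because $\cohbp{\bullet}{\bullet}^{\otimes}$ was built in (\ref{cohbp non affine}) as a limit along a Zariski affine cover of $S$ trivializing $\Lc_S$, while $Sym_{\Oc_S}(\Lc_S[-2])$ and its torsion perfect modules are themselves Zariski-local on $S$.

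\textbf{Step 3: Trivial-line-bundle case.} Now $S = Spec(A)$, $\Lc_S \simeq \Oc_S$, so $Sym_{\Oc_S}(\Lc_S[-2])$ becomes $A[u]$ with $u$ in degree $2$, and the statement reduces to
\begin{equation*}
    \cohbp{X_0}{X} \otimes_{\perf{A[u]}} \perf{A[u]}^{u\text{-tors}} \simeq \perf{X_0},
\end{equation*}
which is exactly the content of the analogous statement in \cite{brtv} (cf. the discussion surrounding \cite[Proposition 2.45]{brtv}), whose proof adapts verbatim.

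The main technical obstacle I anticipate lies in Step~2, namely justifying the descent for the relative tensor product on the left-hand side: this tensor product is defined via colimits and is not \textit{a priori} compatible with limits along a cover. The right justification invokes that $\perf{Sym_{\Oc_S}(\Lc_S[-2])}$ is a rigid (dualizable) symmetric monoidal object over the base, so that relative tensor products against module categories commute with the relevant limits and the left-hand side sheafifies correctly. A direct alternative, if the descent step becomes cumbersome, is to verify $F$ is fully faithful and essentially surjective on a compact generator: one observes that $\Oc_{X_0}$ is a retract of $\mathfrak{i}^*\mathfrak{i}_*\Oc_{X_0} = F(\Oc_{X_0} \otimes \Oc_{S_0})$ (via the Koszul filtration), so that $\perf{X_0}$ is in the thick closure of the image of $F$, after which fully faithfulness again reduces affine-locally to the BRTV computation.
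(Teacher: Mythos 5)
Your proof is correct and follows essentially the same route as the paper: identify the relative tensor product with the (locally) $u$-torsion objects of $\cohbp{X_0}{X}$, reduce Zariski-locally to the trivial-line-bundle situation, and invoke the characterization of perfect complexes as $u$-torsion modules from \cite[Proposition 2.45]{brtv}. The paper's version is terser — it treats the tensor product directly as the full subcategory of locally $u$-torsion modules and checks both inclusions on affine opens of $X_0$ trivializing $\Lc_{X_0}$ — so the descent issue you flag in Step~2 is sidestepped rather than resolved by rigidity, but the substance is the same.
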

\begin{proof}
By definition, 
\begin{equation}
    \cohbp{X_0}{X}\otimes_{\perf{Sym_{\Oc_S}(\Lc_S[-2])}}\perf{Sym_{\Oc_S}(\Lc_S[-2])}^{\textup{tors}}
\end{equation} 
is the subcategory spanned by locally $u$-torsion modules in $\cohbp{X_0}{X}$. We will show that they coincide with the subcategory of perfect complexes, using \cite[Proposition 2.45]{brtv}. Suppose that $\Mcor$ is in $\perf{X_0}$. Then, for every affine open covering $j:Spec(A)\rightarrow X_0$ which trivializes $\Lc_{X_0}$, $j^*\Mcor$ is perfect, and thus it is a $u$-torsion perfect $A[u]$ module by \textit{loc. cit}. Conversely, if $\Mcor$ is locally $u$-torsion, each restriction $j^*\Mcor$ is $u$-torsion, i.e. perfect. This proves the lemma.
\end{proof}
\begin{cor}
Let $(X,s_X)$ be a twisted LG model over $(S,\Lc_S)$. Then the exact sequence in $\dgcatm$
\begin{equation}
    \perf{X_0}\rightarrow \cohbp{X_0}{X}\rightarrow \sing{X,s_X}
\end{equation}
is equivalent to
\begin{equation}
\begindc{\commdiag}[18]
    \obj(0,20)[1]{$\cohbp{X_0}{X}\otimes_{\perf{Sym_{\Oc_S}(\Lc_S[-2])}}\perf{Sym_{\Oc_S}(\Lc_S[-2])}^{\textup{tors}}$} 
    \obj(0,0)[2]{$\cohbp{X_0}{X}$}
    \obj(0,-20)[3]{$\cohbp{X_0}{X}\otimes_{\perf{Sym_{\Oc_S}(\Lc_S[-2])}}\perf{\Rc}.$}
    \mor{1}{2}{$$}
    \mor{2}{3}{$$}
\enddc
\end{equation}
\end{cor}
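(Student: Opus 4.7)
The plan is to read the desired equivalence as an identification of two cofiber sequences in $\dgcatm$, one coming from the very definition of $\sing{X,s_X}$ and the other obtained by tensoring a ``universal'' cofiber sequence on the $S_0$-side with $\cohbp{X_0}{X}$.

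First I would verify that the stated sequence $\perf{X_0}\rightarrow \cohbp{X_0}{X}\rightarrow \sing{X,s_X}$ really is exact in $\dgcatm$. This is not immediate from the definition given in the text, which presents $\sing{X,s_X}$ as the fiber of $\mathfrak{i}_*\colon\sing{X_0}\rightarrow \sing{X}$. The point is that the cofiber $\cohbp{X_0}{X}/\perf{X_0}$ embeds fully faithfully into $\cohb{X_0}/\perf{X_0}=\sing{X_0}$, and its essential image is exactly the kernel of $\mathfrak{i}_*\colon \sing{X_0}\rightarrow \sing{X}$ (an object of $\sing{X_0}$ is annihilated by $\mathfrak{i}_*$ iff it is represented by some $\Ec\in \cohb{X_0}$ with $\mathfrak{i}_*\Ec\in \perf{X}$); in a stable setting fiber and cofiber coincide, so this gives the desired exact sequence.

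Second, I would assemble the three preceding lemmas into one universal exact sequence on the $S_0$-side. Namely, by the previous lemmas we have in $\md_{\perf{Sym_{\Oc_S}(\Lc_S[-2])}}(\dgcatm)$ an exact sequence
\begin{equation*}
\perf{Sym_{\Oc_S}(\Lc_S[-2])}^{\textup{tors}}\rightarrow \perf{Sym_{\Oc_S}(\Lc_S[-2])}\rightarrow \perf{\Rc},
\end{equation*}
since the second map is the Verdier quotient by the $u$-torsion subcategory, which corresponds under the equivalence $(\ref{equivalence cohb(S_0) perf(R[u])})$ to the quotient $\cohb{S_0}\rightarrow \sing{S_0}$.

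Third, I would tensor this exact sequence over $\perf{Sym_{\Oc_S}(\Lc_S[-2])}$ with $\cohbp{X_0}{X}$. The tensor product in $\dgcatm$ preserves colimits in each variable (it is the symmetric monoidal structure on a presentable $\infty$-category), hence it preserves cofiber sequences; this produces the lower row in the desired equivalence. The middle term simplifies to $\cohbp{X_0}{X}$ (tensoring a module over an algebra with the algebra itself), while the left term is $\perf{X_0}$ by the last lemma. By uniqueness of cofibers, the right term must be equivalent to $\sing{X,s_X}$, and the whole exact sequence is equivalent to the one built from the definition.

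The step that I expect to require the most care is the first one, i.e.\ the passage from the fiber definition of $\sing{X,s_X}$ to the cofiber description $\cohbp{X_0}{X}/\perf{X_0}$; one has to invoke that $\mathfrak{i}_*\colon \perf{X_0}\to \perf{X}$ is well defined (which uses the lci-ness of $\mathfrak{i}$ and Toën's result mentioned earlier) and that the induced square relating $\cohb{X_0}$, $\cohbp{X_0}{X}$ and $\perf{X_0}$ is a pullback/pushout. Once this is in hand, the remaining steps are purely formal manipulations with tensor products and cofiber sequences in the presentable stable $\infty$-category $\dgcatm$.
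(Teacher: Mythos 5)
Your proposal is correct and follows essentially the same route as the paper: assemble the previous lemmas into the exact sequence $\perf{Sym_{\Oc_S}(\Lc_S[-2])}^{\textup{tors}}\rightarrow \perf{Sym_{\Oc_S}(\Lc_S[-2])}\rightarrow \perf{\Rc}$ and tensor it with $\cohbp{X_0}{X}$, using that this operation preserves exact sequences. The only difference is your first step: the identification $\sing{X,s_X}\simeq \cohbp{X_0}{X}/\perf{X_0}$ is not something you need to re-derive here, since the paper already records it in the earlier lemma modelled on \cite[Lemma 2.33]{brtv}.
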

\begin{proof}
This is an obvious consequence of the previous lemmas and of the fact that the $\oo$-functor
\[
T\mapsto \cohbp{X_0}{X}\otimes_{\perf{Sym_{\Oc_S}(\Lc_S[-2])}}T
\]preserves exact sequences in $\md_{\perf{Sym_{\Oc_S}(\Lc_S[-2])}}(\dgcatm)$.
\end{proof}
\begin{construction}
Consider the following lax monoidal $\oo$-functor
\begin{equation}
\begindc{\commdiag}[18]
\obj(-60,10)[1]{$\tlgm{S}^{\textup{op},\boxplus}$}
\obj(60,10)[2]{$\md_{\perf{Sym_{\Oc_S}(\Lc_S[-2])}}(\dgcatm)^{\otimes}$}
\obj(60,-10)[3]{$\md_{\perf{\Rc}}(\dgcatm)^{\otimes}$.}

\mor{1}{2}{$\cohbp{\bullet}{\bullet}^{\otimes}$}
\mor{2}{3}{$-\otimes_{Sym_{\Oc_S}(\Lc_S[-2])}\Rc$}
\enddc    
\end{equation}
The previous corollary means that its underlying $\oo$-functor, composed with the forgetful functor
\begin{equation}
    \md_{\perf{\Rc}}(\dgcatm)\rightarrow \dgcatm,
\end{equation}
is defined on objects by the assignment
\begin{equation*}
    (X,s)\mapsto \sing{X,s}.
\end{equation*}
\end{construction}
\subsection{The motivic realization of \texorpdfstring{$\sing{X,s_X}$}{Sing(X,s)}}
Recall from Section \ref{the bridge between motives and non commutative motives} that there is a motivic realization lax monoidal $\oo$-functor 
\begin{equation}
    \Mv_S:\dgcatmo\rightarrow \md_{\bu_S}(\sh)^{\otimes}
\end{equation}
with the following properties
\begin{itemize}
    \item $\Mv_S$ commutes with filtered colimits.
    \item For every dg category $T$, $\Mv_S(T)$ is the spectrum
    \begin{equation*}
        Y\in \sm \mapsto \textup{HK}(\perf{Y}\otimes_S T).
    \end{equation*}
    \item $\Mv_S$ sends exact sequences of dg categories to fiber-cofiber sequences in $\md_{\bu_S}(\shm)$. 
\end{itemize}
Our main scope in this section is to study the motivic realization of the dg category $\sing{X,s_X}$ associated to $(X,s_X)\in \tlgm{S}$, under the assumption that $X$ is regular.
The first important fact is the following one:
\begin{prop}{\textup{\cite[Proposition 3.13]{brtv}}}
Let $p:X\rightarrow S$ be in $\sch$. Then $\Mcor^{\vee}_X(\perf{X})\simeq \bu_X$ (see section \ref{stable homotopy category of schemes} for notation).
\end{prop}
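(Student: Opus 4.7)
The plan is to reduce the computation to the fact, already quoted in the text from Robalo, that $\Mcor$ sends the monoidal unit $1^{\textup{nc}}_X$ of $\shnc_X$ to $\bu_X$. Recall that, by construction, $\Mv_X$ factors as
\begin{equation*}
    \dgcatmo \xrightarrow{\psi^{\otimes}} \shncmop \xrightarrow{\rhom_{\shnc_X}(-,1^{\textup{nc}}_X)} \shncm \xrightarrow{\Mcor^{\otimes}} \md_{\bu_X}(\textup{\textbf{SH}}_X)^{\otimes}.
\end{equation*}
So once we know that $\psi_X(\perf{X}) \simeq 1^{\textup{nc}}_X$, we will get
\begin{equation*}
    \Mv_X(\perf{X}) \simeq \Mcor_X\bigl( \rhom_{\shnc_X}(1^{\textup{nc}}_X, 1^{\textup{nc}}_X) \bigr) \simeq \Mcor_X(1^{\textup{nc}}_X) \simeq \bu_X,
\end{equation*}
where the last equivalence is Robalo's theorem recalled just before Section 2.6.

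The first substantive step is therefore to argue that $\perf{X}$ is the monoidal unit of $\dgcat_X^{\textup{idm},\otimes}$. This is essentially a tautology: under the base change $S \leadsto X$ implicit in the subscripts, the symmetric monoidal structure on $\dgcat_X^{\textup{idm},\otimes}$ is $\otimes_{\perf{X}}$, whose unit is $\perf{X}$ itself. Since $\psi^{\otimes}$ is symmetric monoidal by construction, it sends $\perf{X}$ to $1^{\textup{nc}}_X$.

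As a sanity check one can verify the statement pointwise: for $Y \in \textup{\textbf{Sm}}_X$, the definition of $\Mv_X$ gives
\begin{equation*}
    \Mv_X(\perf{X})(Y) \simeq \hk\bigl(\perf{Y} \otimes_X \perf{X}\bigr) \simeq \hk(\perf{Y}) \simeq \hk(Y),
\end{equation*}
using again that $\perf{X}$ is the unit, and this agrees with $\Map_{\textup{\textbf{SH}}_X}(\Sigma^{\oo}_+ Y, \bu_X) \simeq \bu_X(Y)$ by the representability property of $\bu_X$. The only potential obstacle is making the identification $\psi_X(\perf{X}) \simeq 1^{\textup{nc}}_X$ fully rigorous in the non-commutative setting, but this is immediate from the symmetric monoidality of the construction $\psi^{\otimes}$ recalled in the preliminaries, so the proof is essentially formal once Robalo's theorem is invoked.
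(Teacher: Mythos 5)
Your proof is correct and rests on the same key input as the paper's, namely Robalo's theorem that $\mathcal{M}_X(1^{\textup{nc}}_X)\simeq \textup{B}\mathbb{U}_X$. The only difference is cosmetic: where you identify $\perf{X}$ with the monoidal unit and use $\rhom(1^{\textup{nc}}_X,1^{\textup{nc}}_X)\simeq 1^{\textup{nc}}_X$ formally, the paper verifies the same equivalence by testing both sides against arbitrary objects of $\textup{\textbf{SH}}_X$ via the adjunction $(\textup{R}_{\textup{Perf}},\mathcal{M}_X)$ and the identification $\iota\circ\perf{\bullet}\simeq\textup{R}_{\textup{Perf}}\circ\Sigma^{\oo}_+$ — your shortcut is a legitimate simplification of that computation.
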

\begin{proof}
Consider the construction given in Section \ref{the bridge between motives and non commutative motives} with $S$ replaced by $X$:
\begin{equation}
    \textup{\textbf{Sm}}^{\times}_X \xrightarrow{\perf{\bullet}} \textup{\textbf{dgCat}}^{\textup{idm,op},\otimes}\xrightarrow{\iota} \textup{\textbf{SH}}^{\textup{nc,op},\otimes}_X \xrightarrow{\rhom(-,1_X^{nc})} \textup{\textbf{SH}}^{\textup{nc},\otimes}_X \xrightarrow{\Mcor_X} \textup{\textbf{SH}}_X^{\otimes}.
\end{equation}
We wish to show that $\Mcor_X (\rhom(\iota \circ \perf{X},1^{nc}_X))\simeq \bu_X$. Notice that since the $\oo$-functor above is right lax monoidal and $\perf{X}$ is a commutative algebra in $\textup{\textbf{dgCat}}^{\textup{idm,op},\otimes}$, there is a canonical morphism of commutative algebras 
\begin{equation}
    \bu_X\rightarrow \Mcor_X (\rhom(\iota \circ \perf{X},1^{nc}_X)).
\end{equation} 
It suffices to show that they represent the same $\oo$-functor. Let $\mathcal{Y}$ be an object in $\textup{\textbf{SH}}_X^{\otimes}$. Then
\begin{equation}
    \Map_{\textup{\textbf{SH}}_X}(\mathcal{Y},\Mcor_X (\rhom(\iota \circ \perf{X},1^{nc}_X)))
    \end{equation}
\begin{equation*}
\simeq \Map_{\textup{\textbf{SH}}^{\textup{nc}}_X}(\textup{R}_{\textup{Perf}}(\mathcal{Y}),\rhom(\iota \circ \perf{X},1^{nc}_X))
\end{equation*}
\begin{equation*}
    \simeq \Map_{\textup{\textbf{SH}}^{\textup{nc}}_X}(\textup{R}_{\textup{Perf}}(\mathcal{Y})\otimes \textup{R}_{\textup{Perf}}(\Sigma^{\oo}_+X),1^{nc}_X)\simeq \Map_{\textup{\textbf{SH}}_X^{nc}}(\textup{R}_{\textup{Perf}}(\mathcal{Y}\otimes \Sigma^{\oo}_+X),1^{nc}_X)
\end{equation*}
\begin{equation*}
    \simeq \Map_{\textup{\textbf{SH}}_X}(\textup{R}_{\textup{Perf}}(\mathcal{Y}),1_X^{nc})\simeq \Map_{\textup{\textbf{SH}}_X}(\mathcal{Y},\Mcor_X(1_X^{nc}))\simeq \Map_{\textup{\textbf{SH}}_X}(\mathcal{Y},\bu_X)
\end{equation*}
where we have used that $\iota \circ \perf{\bullet}\simeq \textup{R}_{\textup{Perf}}\circ \Sigma^{\oo}_+$, which are all symmetric monoidal functors, $\Sigma^{\oo}_+(X)$ is the unit in $\textup{\textbf{SH}}_X^{\otimes}$, that $\textup{R}_{\textup{Perf}}$ is left adjoint to $\Mcor_X$ and Robalo's result $\Mcor_X(1^{nc}_X)\simeq \bu_X$ (\cite[Theorem 1.8]{ro15}). The fact that the equivalence holds in $\textup{CAlg}(\textup{\textbf{SH}}_X^{\otimes})$ follows immediately from the conservativity of the forgetful functor
\begin{equation}
   \textup{CAlg}(\textup{\textbf{SH}}_X^{\otimes})\rightarrow \textup{\textbf{SH}}_X^{\otimes}.
\end{equation}
\end{proof}
\begin{rmk}\label{multiplication by perfect complex in HK}
Notice that since the equivalence $\bu_X\rightarrow \Mcor^{\vee}_X(\perf{X})$ holds in $\textup{CAlg}(\textup{\textbf{SH}}_X)$, if we consider a perfect complex $\Ec$ scheme $X$, then the image of
\begin{equation}
    \perf{X}\xrightarrow{-\otimes \Ec}\perf{X}
\end{equation}
along $\Mcor^{\vee}_X$ coincides with multiplication by the class $[\Ec]\in \textup{HK}_0(X)$ in the commutative algebra $\bu_X$, which we denote $m_{\Ec}$. In a formula : $\Mcor^{\vee}_X(-\otimes \Ec)\simeq m_{\Ec}\in \Map_{\bu_X}(\bu_X,\bu_X)$.
Moreover, for any exact triangle $\Ec'\rightarrow \Ec \rightarrow \Ec''$ we get $[\Ec]=[\Ec']+[\Ec'']\in \textup{HK}_0(X)$. Then $m_{\Ec}=m_{\Ec'}+m_{\Ec''}$. In particular, $m_{\Ec[1]}=-m_{\Ec}$.
\end{rmk}
The next step will be understanding the motivic realization of the category of coherent bounded complexes $\cohb{Z}$ of an $S$-scheme $Z$, at least when it is possible to regard it as a closed subscheme of a regular $S$-scheme $X$.
\begin{prop}{\textup{\cite[Proposition 3.17]{brtv}}}\label{motivic realization cohb}
Let $p:X\rightarrow S$ be a regular $S$-scheme of finite type. Consider a closed subscheme $i: Z\rightarrow X$ and label $j:U\rightarrow X$ the embedding of the complementary open subscheme. Then there is an equivalence 
\begin{equation}
    \Mcor_X(\cohb{Z})\simeq i_*i^!\bu_X.
\end{equation}
\end{prop}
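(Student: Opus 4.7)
The plan is to match the dg-category localization sequence arising from the stratification $X = Z \sqcup U$ with the Grothendieck--Verdier localization triangle for $\bu_X$. The regularity of $X$ will allow us to replace $\cohb{\bullet}$ by $\perf{\bullet}$ and to perform a d\'evissage.

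First, since $X$ is regular noetherian of finite type, Auslander--Buchsbaum--Serre yields $\cohb{X} \simeq \perf{X}$, and the same regularity ensures that the pushforward $i_*$ carries any object of $\cohb{Z}$ to a perfect complex on $X$ supported set-theoretically on $Z$. A standard d\'evissage argument then shows that $i_*$ induces an equivalence $\cohb{Z} \xrightarrow{\sim} \perf{X}_Z$, where $\perf{X}_Z \subseteq \perf{X}$ denotes the full sub-dg-category of perfect complexes supported on $Z$. The Thomason--Trobaugh localization theorem then provides an exact sequence in $\dgcatm$
\begin{equation*}
\cohb{Z} \xrightarrow{\, i_* \,} \perf{X} \xrightarrow{\, j^* \,} \perf{U}.
\end{equation*}

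Applying the motivic realization $\Mcor_X$, which sends exact sequences of dg categories to fiber--cofiber sequences in $\md_{\bu_X}(\textup{\textbf{SH}}_X)$, and using the identification $\Mcor_X(\perf{X}) \simeq \Mcor_X(1^{\textup{nc}}_X) \simeq \bu_X$ from Robalo's theorem (the covariant shadow of the previous proposition), I obtain
\begin{equation*}
\Mcor_X(\cohb{Z}) \longrightarrow \bu_X \longrightarrow \Mcor_X(\perf{U}).
\end{equation*}
It remains to identify $\Mcor_X(\perf{U})$ with $j_* j^* \bu_X \simeq j_* \bu_U$, and to verify that the right-hand arrow above coincides with the unit of the adjunction $j^* \dashv j_*$ applied to $\bu_X$. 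Both assertions will follow from the compatibility of $\Mcor$ with pushforwards along open immersions --- a piece of the six-functor enhancement of the non-commutative stable homotopy category due to Robalo --- applied to the fact that $\perf{U}$, viewed as an $X$-linear dg category, is $j_*$ of the monoidal unit over $U$. Combined with the standard localization triangle $i_* i^! \bu_X \to \bu_X \to j_* j^* \bu_X$ in $\textup{\textbf{SH}}_X$ and the uniqueness of fibers in a stable $\oo$-category, this yields $\Mcor_X(\cohb{Z}) \simeq i_* i^! \bu_X$.

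The main obstacle I expect is the identification $\Mcor_X(\perf{U}) \simeq j_* \bu_U$, where the six-functor compatibility of $\Mcor$ is essential; the remaining ingredients are either formal (exactness of $\Mcor_X$, fibers in a stable $\oo$-category) or direct consequences of the regularity of $X$, which enters only to guarantee $\cohb{X} \simeq \perf{X}$ and the d\'evissage $\cohb{Z} \simeq \perf{X}_Z$. Dropping regularity would therefore already invalidate the starting localization sequence and force a genuinely different strategy.
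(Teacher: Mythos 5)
There is a genuine gap at the d\'evissage step. You assert that $i_*$ induces an equivalence of dg categories $\cohb{Z}\xrightarrow{\sim}\perf{X}_Z$, but this is false: d\'evissage is a statement about K-theory (more precisely G-theory) spectra, not about the underlying categories. The functor $i_*$ is neither fully faithful nor essentially surjective at the derived level. For full faithfulness, $\Map_X(i_*\Fc,i_*\Gc)\simeq \Map_Z(i^*i_*\Fc,\Gc)$ and $i^*i_*\Fc$ carries extra Koszul/Tor terms (already for $Z=Spec(k)\hookrightarrow \aff{1}{k}$ one has $\End_X(i_*k)\simeq k\oplus k[-1]$ while $\End_Z(k)\simeq k$). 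For essential surjectivity, a complex such as $\Oc_X/\mathcal{I}_Z^2$ is supported on $Z$ but does not lie in the essential image of $i_*$ from $Z$ itself. So the localization sequence you start from, $\cohb{Z}\to\perf{X}\to\perf{U}$, is not an exact sequence of dg categories as written.

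The rest of your argument (exactness of $\Mcor^{\vee}_X$, the identification $\Mcor^{\vee}_X(\perf{U})\simeq j_*j^*\bu_X$ via compatibility with pushforwards, and matching against the triangle $i_*i^!\bu_X\to\bu_X\to j_*j^*\bu_X$) is the same as the paper's and is fine. What the paper does differently, and what you need, is to run the localization sequence with $\cohb{X}_Z$ (complexes on $X$ supported on $Z$) in place of $\cohb{Z}$, obtaining $\Mcor^{\vee}_X(\cohb{X}_Z)\simeq i_*i^!\bu_X$, and then to prove separately that the canonical map $\Mcor^{\vee}_X(\cohb{Z})\to\Mcor^{\vee}_X(\cohb{X}_Z)$ is an equivalence. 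That comparison is checked against the compact generators $\Sigma^{\oo}_+Y\otimes\bu_X$: the relevant mapping spectra are identified, via $\perf{Y}\otimes\cohb{Z}\simeq\cohb{Y\times_XZ}$ and the theorem of the heart, with the G-theory of $Y\times_XZ$ and of the abelian category of coherent sheaves on $X$ supported on $Z$, at which point Quillen's d\'evissage applies. In short: d\'evissage must be invoked after applying the motivic realization, not before.
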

\begin{proof}
Consider the exact sequence of dg categories 
\begin{equation}
    \cohb{X}_Z \rightarrow \cohb{X}\xrightarrow{j^*} \cohb{U}
\end{equation}
where $\cohb{X}_Z$ denotes the subcategory of objects in $\cohb{X}$ whose support is in $Z$.

Recall that, if $X$ is a scheme, an $\Oc_X$-linear dg category is a sheaf of dg categories on the Zariski site of $X$. Then the above sequence of dg categories is exact by definition and if we apply $\Mcor^{\vee}_X$ to it we obtain a fiber cofiber sequence in $\textup{\textbf{SH}}_X$ (see \cite[Remark 2.2.2]{tv19b}).

The regularity hypothesis imposed on $X$ implies that $\cohb{X}\simeq \perf{X}$ and $\cohb{U}\simeq \perf{U}$. If we apply $\Mcor_X^{\vee}$ and use the previous proposition, we obtain
\begin{equation}
    \Mcor^{\vee}_X(\cohb{X}_Z)\rightarrow \bu_X \xrightarrow{\Mcor^{\vee}_X(j^*)} \Mcor^{\vee}_X(\perf{U}),
\end{equation}
which is a fiber-cofiber sequence in $\textup{\textbf{SH}}_X$ (since $\Mv_X$ sends exact triangles of dg categories to fiber-cofiber sequences). Moreover, $\Mcor_X^{\vee}$ is compatible with pushforwards and $\Mcor^{\vee}_X(j^*)\sim j^*$. As the spectrum $\bu$ of non-connective homotopy-invariant K-theory is compatible with pullbacks (see \cite{cd19}), the previous fiber-cofiber sequence is nothing but
\begin{equation}
    \Mcor^{\vee}_X(\cohb{X}_Z)\rightarrow \bu_X \rightarrow j_*j^*\bu_X,
\end{equation}
where the map on the right is induced by the unit of the adjunction $(j^*,j_*)$. In particular, we get a canonical equivalence
\begin{equation}
    \Mcor^{\vee}_X(\cohb{X}_Z)\simeq i_*i^!\bu_X
\end{equation}
and therefore we are left to show that  we have an equivalence \begin{equation}
    \Mcor_X^{\vee}(\cohb{Z})\simeq \Mcor_X^{\vee}(\cohb{X}_Z).
\end{equation} 

Here $\cohb{Z}$ is the sheaf of dg categories
\begin{equation}
    V=Spec(R)\mapsto \cohb{Z\times_XV},
\end{equation}
where $V$ is an affine open subscheme of $X$.

Notice that there is a canonical morphism \begin{equation*}
    \Mcor^{\vee}_X(i_*):\Mcor^{\vee}_X(\cohb{Z}\rightarrow \Mcor^{\vee}_X(\cohb{X}_Z)).
\end{equation*}
The collection of objects $\Sigma^{\oo}_+Y\otimes \bu_X$, where $Y\in \textup{\textbf{Sm}}_X$, forms a family of compact generators of $\md_{\bu_X}(\textup{\textbf{SH}}_X)$. As $\Mcor^{\vee}_X$ commutes with filtered colimits, it suffices to show that the morphism
\begin{equation}
    \begindc{\commdiag}[18]
    \obj(0,10)[1]{$\Map_{\bu_X}(\Sigma^{\oo}_+Y\otimes \bu_X,\Mcor^{\vee}_X(\cohb{Z}))$} 
    \obj(0,-10)[2]{$\Map_{\bu_X}(\Sigma^{\oo}_+Y\otimes \bu_X,\Mcor^{\vee}_X(\cohb{X}_Z))$}
    \mor{1}{2}{$$}
    \enddc
\end{equation}

is an equivalence of spectra. Notice that
\begin{equation*}
     \Map_{\bu_X}(\Sigma^{\oo}_+Y\otimes \bu_X,\Mcor^{\vee}_X(\cohb{Z}))\simeq  \Map_{\textup{\textbf{SH}}_X}(\Sigma^{\oo}_+Y,\Mcor^{\vee}_X(\cohb{Z}))
\end{equation*}
\begin{equation}
    \simeq \Map_{\textup{\textbf{SH}}^{nc}_X}(\textup{R}_{\textup{Perf}}\circ \Sigma^{\oo}_+Y,\rhom(\cohb{Z},1^{nc}_X))
\end{equation}
\begin{equation*}
\simeq \Map_{\textup{\textbf{SH}}^{nc}_X}(\perf{Y}\otimes \cohb{Z},1^{nc}_X).
\end{equation*}
By \cite{pr11}, $\perf{Y}\otimes \cohb{Z}\simeq \cohb{Y}\otimes \cohb{Z}\simeq \cohb{Y\times_X Z}$, and therefore the spectrum above coincides with $\textup{HK}(\cohb{Y\times_X Z})$, which by the homotopy-invariance of G-theory and by the theorem of the Heart (\cite[Corollary 6.4.1]{ba15}) coincides with the $G$-theory of $Y\times_X Z$. In the same way, we obtain that $\Map_{\textup{\textbf{SH}}_X}(\Sigma^{\oo}_+Y,\Mcor^{\vee}_X(\cohb{X}_Z))$ coincides with the $G$-theory spectrum of the abelian category $\textup{\textbf{Coh}}(Y)_{Y\times_XZ}$ (the heart of the dg category $\cohb{Y}_{Y\times_XZ}$). The claim now follows from Quillen's d\'evissage.
\end{proof}

\begin{rmk}
An anonymous referee has pointed out that in order to deal with $\Oc_X$-linear dg categories one can use the theory of $1$-affiness of D.~Gaitsgory (\cite{g15}). One can use this to give an alternative proof of the proposition above.
\end{rmk}

As a final observation, we remark that the assignemnt $Z\mapsto \Mcor^{\vee}_S(\cohb{Z})$ is insensible to (derived) thickenings.
\begin{lmm}{\textup{\cite[Proposition 3.24]{brtv}}}
Let $Z$ be a derived scheme of finite type over $S$ and let $t:\pi_0(Z)\rightarrow Z$ be the canonical closed embedding of the underlying scheme of $Z$. Then
\begin{equation}
    \Mcor^{\vee}_S(\cohb{Z})\simeq \Mcor^{\vee}_S(\cohb{\pi_0(Z)}).
\end{equation}
\end{lmm}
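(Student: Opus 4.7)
The plan is to reduce to the previous computation via the Yoneda-style testing on compact generators of $\md_{\bu_S}(\sh)$, as was done in the proof of Proposition \ref{motivic realization cohb}, and then conclude by an application of Barwick's theorem of the heart together with the observation that truncation does not alter the underlying classical scheme of the relevant derived fiber products.

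First, I would produce the comparison map. The closed embedding $t:\pi_0(Z)\hookrightarrow Z$ induces a pushforward dg functor $t_*:\cohb{\pi_0(Z)}\rightarrow \cohb{Z}$ (here $t$ is almost of finite type, so the pushforward preserves bounded coherent complexes). Applying $\Mv_S$ yields a morphism $\Mv_S(t_*):\Mv_S(\cohb{\pi_0(Z)})\rightarrow \Mv_S(\cohb{Z})$ in $\md_{\bu_S}(\sh)$, and my aim is to show it is an equivalence.

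Since $\Mv_S$ takes values in $\md_{\bu_S}(\sh)$ and commutes with filtered colimits, it suffices to check that the map becomes an equivalence after mapping in from each compact generator $\Sigma^{\oo}_+Y\otimes \bu_S$ with $Y\in \sm$. Unwinding the definitions exactly as in the proof of Proposition \ref{motivic realization cohb}, we obtain
\begin{equation*}
\Map_{\bu_S}\bigl(\Sigma^{\oo}_+Y\otimes\bu_S,\Mv_S(\cohb{Z})\bigr)\simeq \hk\bigl(\perf{Y}\otimes_S\cohb{Z}\bigr),
\end{equation*}
and by the equivalence $\perf{Y}\otimes_S\cohb{Z}\simeq \cohb{Y\times_S^h Z}$ of A.~Preygel (as used in \textit{loc.\ cit.}), this rewrites as $\hk(\cohb{Y\times_S^h Z})$; similarly for $\pi_0(Z)$ in place of $Z$. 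Thus I am reduced to proving that the natural map
\begin{equation*}
\hk\bigl(\cohb{Y\times_S^h\pi_0(Z)}\bigr)\rightarrow \hk\bigl(\cohb{Y\times_S^h Z}\bigr)
\end{equation*}
induced by $(\mathrm{id}_Y\times t)_*$ is an equivalence.

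The key point is now Barwick's theorem of the heart (\cite[Corollary 6.4.1]{ba15}), combined with $\aff{1}{}$-homotopy invariance of $G$-theory, just as in Proposition \ref{motivic realization cohb}: both dg categories $\cohb{Y\times_S^h Z}$ and $\cohb{Y\times_S^h\pi_0(Z)}$ carry bounded t-structures whose hearts are the Grothendieck abelian categories of coherent sheaves on the underlying classical schemes $\pi_0(Y\times_S^h Z)$ and $\pi_0(Y\times_S^h\pi_0(Z))$ respectively. Since $Y$ and $S$ are already underived, both truncations are canonically identified with the classical fiber product $Y\times_S\pi_0(Z)$, and under these identifications the pushforward induced by $\mathrm{id}_Y\times t$ corresponds to the identity on the hearts. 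Hence the map in question is the identity on $G(Y\times_S\pi_0(Z))$, which is trivially an equivalence. The main obstacle, and the only delicate point, is to verify carefully that the dg-functorial pushforward $t_*$ on $\cohb{\,\cdot\,}$ compatibly identifies with the identity on hearts after truncation; this follows because $t$ is a thickening in the derived sense, so $t_*$ is conservative and t-exact on bounded coherent complexes, and the induced functor on hearts is the canonical identification $\textup{Coh}(\pi_0(Z))\simeq \textup{Coh}(\pi_0(Z))$.
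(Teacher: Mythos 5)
Your proposal is correct and follows essentially the same route as the paper: identify $\Mcor^{\vee}_S(\cohb{-})$ on compact generators with $\hk(\cohb{Y\times_S^h -})$ as in the previous proposition, observe that $\pi_0(Y\times_S^h Z)=Y\times_S\pi_0(Z)=\pi_0(Y\times_S^h\pi_0(Z))$ so the hearts agree, and conclude by the theorem of the heart together with homotopy invariance of $G$-theory. Your version is somewhat more careful about exhibiting the comparison map $t_*$ and its $t$-exactness, but the substance is identical to the paper's argument.
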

\begin{proof}
By the proof of the previous proposition, $\Mcor^{\vee}_S(\cohb{Z})$ represents the spectra valued sheaf $Y\mapsto \textup{HK}(\cohb{Y\times_S Z})$. Similarly, $\Mcor^{\vee}_S(\cohb{\pi_0(Z)})$ represents the spectra-valued sheaf $Y\mapsto \textup{HK}(\cohb{Y\times_S \pi_0(Z)})$. Notice that $\pi_0(Y\times_S Z)=\pi_0(Y)\times_{\pi_0(S)}\pi_0(Z)=Y\times_S \pi_0(Z)$ and that the heart of $\cohb{Y\times_S Z}$ is equivalent to the heart of $\cohb{Y\times_S \pi_0(Z)}$. Then the theorem of the Heart and the computation above allows us to conclude.
\end{proof}
Now consider a twisted LG model $(X,s_X)$ over $(S,\Lc_S)$ and assume that $X$ is a regular scheme. As above, let $X_0\xrightarrow{\mathfrak{i}}X$ be the derived zero section of $s_X$ in $X$ and let $j:X_{\mathcal{U}}=X-X_0\rightarrow X$ be the corresponding open embedding. In this case, we have that $\sing{X,s_X}\simeq \sing{X_0}$. Consider the diagram in $\textup{\textbf{dgCat}}_X^{\textup{idm}}$ and its image in $\textup{\textbf{SH}}_X$
\begin{equation}
    \begindc{\commdiag}[13]
    \obj(-50,30)[1]{$\perf{X_0}$}
    \obj(0,30)[2]{$\cohb{X_0}$}
    \obj(50,30)[3]{$\sing{X_0}$}
    \obj(0,0)[4]{$\perf{X}$}
    \obj(0,-30)[5]{$\perf{X_{\mathcal{U}}}$}
    \mor{1}{2}{$$}[\atright,\injectionarrow]
    \mor{2}{3}{$$}
    \mor{2}{4}{$\mathfrak{i}_*$}
    \mor{4}{5}{$j^*$}
    \mor{1}{4}{$\mathfrak{i}_*$}[\atright,\solidarrow]
\enddc
\end{equation}
\begin{equation}
\begindc{\commdiag}[13]
    \obj(-70,30)[1a]{$\Mcor^{\vee}_X(\perf{X_0})$}
    \obj(0,30)[2a]{$\Mcor^{\vee}_X(\cohb{X_0})$}
    \obj(70,30)[3a]{$\Mcor^{\vee}_X(\sing{X_0})$}
    \obj(0,0)[4a]{$\Mcor^{\vee}_X(\perf{X})$}
    \obj(0,-30)[5a]{$\Mcor^{\vee}_X(\perf{X_{\mathcal{U}}}).$}
    \mor{1a}{2a}{$$}
    \mor{2a}{3a}{$$}
    \mor{2a}{4a}{$\Mcor^{\vee}_X(\mathfrak{i}_*)$}
    \mor{4a}{5a}{$j^*$}
    \mor{1a}{4a}{$\Mcor^{\vee}(\mathfrak{i}_*)$}[\atright,\solidarrow]
    \obj(-80,0)[6]{$\rightsquigarrow$}
    \obj(-80,-10)[7]{$\Mv_X$}
    \enddc
\end{equation}
By the previous results, the second diagram can be rewritten and completed as follows
\begin{equation}\label{fundamental diagram motivic realization sing}
    \begindc{\commdiag}[18]
    \obj(-60,30)[1a]{$\Mcor^{\vee}_X(\perf{X_0})$}
    \obj(0,30)[2a]{$i_*i^!\bu_X$}
    \obj(60,30)[3a]{$\Mcor^{\vee}_X(\sing{X_0})$}
    \obj(0,0)[4a]{$\bu_X$}
    \obj(0,-30)[5a]{$j_*j^*\bu_X$}
    \obj(-60,0)[6a]{$j_!j^*\bu_X$}
    \obj(60,0)[7a]{$i_*i^*\bu_X$}
    \mor{1a}{2a}{$$}
    \mor{2a}{3a}{$$}
    \mor{2a}{4a}{$\Mcor^{\vee}_X(\mathfrak{i}_*)$}
    \mor{4a}{5a}{$\text{counit }(j^*,j_*)$}
    \mor{1a}{4a}{$\Mcor^{\vee}(\mathfrak{i}_*)$}
    \mor{6a}{4a}{}
    \mor{4a}{7a}{}
    \mor{6a}{5a}{$\alpha$}
    \enddc
\end{equation}
where $i=\mathfrak{i}\circ t: \pi_0(X_0)\rightarrow X$ is the closed embedding of the underlying scheme of $X_0$ in $X$.
\begin{rmk}
The morphism $\bu_X\rightarrow i_*i^*\bu_X$ can be factored as
\begin{equation}
    \underbracket{\Mcor^{\vee}_X(\perf{X})}_{\bu_X\simeq }\xrightarrow{\Mcor^{\vee}_X(\mathfrak{i}^*)} \Mcor^{\vee}_X(\perf{X_0}) \xrightarrow{\Mcor^{\vee}_X(t^*)} \underbracket{\Mcor^{\vee}_X(\perf{\pi_0(X_0)})}_{\simeq i_*i^*\bu_X}.
\end{equation}
\end{rmk}
\begin{lmm}
The endomorphism 
\begin{equation}
\Mcor^{\vee}_X(\mathfrak{i}^*)\circ \Mcor^{\vee}_X(\mathfrak{i}_*):\Mcor^{\vee}_X(\perf{X_0})\rightarrow \Mcor^{\vee}_X(\perf{X_0})
\end{equation}
is homotopic to $1-m_{\Lc^{\vee}_{X_0}}$, where $m_{\Lc^{\vee}_{X_0}}$ is the autoequivalence induced by $-\otimes \Lc^{\vee}_{X_0}:\perf{X_0}\rightarrow \perf{X_0}$.
\end{lmm}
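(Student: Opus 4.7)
The plan is to exploit the fact that $\mathfrak{i}: X_0 \hookrightarrow X$ is a derived lci closed immersion cut out by a single section of a line bundle, so the self-intersection $\mathfrak{i}^*\mathfrak{i}_*\Oc_{X_0}$ is computable by an explicit Koszul-type resolution, and then to invoke Remark \ref{multiplication by perfect complex in HK} to identify the image under $\Mv_X$ with multiplication by a K-theory class.

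First, I would reduce the problem to a single object. Since $\mathfrak{i}$ is lci, the projection formula holds and gives an equivalence of endofunctors of $\perf{X_0}$,
\begin{equation*}
    \mathfrak{i}^*\mathfrak{i}_*(-)\;\simeq\;(-)\otimes_{\Oc_{X_0}}\mathfrak{i}^*\mathfrak{i}_*\Oc_{X_0}.
\end{equation*}
Thus it suffices to compute $\mathfrak{i}^*\mathfrak{i}_*\Oc_{X_0}$ and apply the Remark on multiplication by a perfect complex.

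Next I would compute $\mathfrak{i}^*\mathfrak{i}_*\Oc_{X_0}$ using Construction \ref{strict model for the derived zero locus}: the morphism $\mathfrak{i}_*\Oc_{X_0}$ admits, as an $\Oc_X$-module, the explicit two-term resolution
\begin{equation*}
    \Lc_X^{\vee}\xrightarrow{\,s_X\,}\Oc_X.
\end{equation*}
Pulling this complex back along $\mathfrak{i}$ and using that $\mathfrak{i}^*s_X$ is the zero section of $\Lc_{X_0}$ (since $s_X$ vanishes on its own derived zero locus), I get that $\mathfrak{i}^*\mathfrak{i}_*\Oc_{X_0}$ is computed by $\Lc_{X_0}^{\vee}\xrightarrow{\,0\,}\Oc_{X_0}$, i.e.\ it sits in a split exact triangle
\begin{equation*}
    \Lc_{X_0}^{\vee}[1]\;\longrightarrow\;\mathfrak{i}^*\mathfrak{i}_*\Oc_{X_0}\;\longrightarrow\;\Oc_{X_0}.
\end{equation*}

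Finally I apply Remark \ref{multiplication by perfect complex in HK} (which says that $\Mcor_X^{\vee}$ sends tensoring by a perfect complex to multiplication by its class, and that this is additive on exact triangles, with a shift contributing a sign). Combining it with the previous triangle yields
\begin{equation*}
    \Mcor^{\vee}_X(\mathfrak{i}^*\mathfrak{i}_*)\;\simeq\;m_{\mathfrak{i}^*\mathfrak{i}_*\Oc_{X_0}}\;\simeq\;m_{\Oc_{X_0}}+m_{\Lc_{X_0}^{\vee}[1]}\;\simeq\;1-m_{\Lc_{X_0}^{\vee}},
\end{equation*}
as required. The only subtle point, and the one I would take most care over, is making sure that the projection-formula identification is functorial enough to be recognised by the motivic realisation as an equivalence of dg endofunctors of $\perf{X_0}$ (rather than merely a pointwise equivalence). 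Once that is granted, the rest of the argument is purely a K-theoretic additivity computation.
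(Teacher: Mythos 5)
Your proof is correct and is essentially the paper's argument: both identify $\mathfrak{i}^*\mathfrak{i}_*$ with the endofunctor $\Ec\mapsto\Ec\oplus(\Ec\otimes_{\Oc_{X_0}}\Lc^{\vee}_{X_0}[1])$ via the split Koszul computation of the derived self-intersection, and then conclude by the additivity of $m_{(-)}$ on exact triangles from Remark \ref{multiplication by perfect complex in HK}. The only difference is that the paper obtains this identification (together with the functoriality you flag as the subtle point) from the $\boxtimes$-action of $\cohb{S_0}$ on $\cohbp{X_0}{X}$ established in Remark \ref{actions}, which is itself proved by exactly your projection-formula and base-change computation.
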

\begin{proof}
Indeed, by Remark \ref{actions}, the endomorphism $\mathfrak{i}^*\mathfrak{i}_*$ is equivalent to $-\boxtimes (\Lc^{\vee}_S\xrightarrow{0}\Oc_S)$, the identity is equivalent to $-\boxtimes \Oc_S$ and $-\boxtimes \Lc^{\vee}_S$ corresponds to $-\otimes \Lc^{\vee}_{X_0}:\perf{X_0}\rightarrow \perf{X_0}$. Then, considering the cofiber sequence of dg functors
\begin{equation}
    \begindc{\commdiag}[15]
    \obj(-30,15)[1]{$-\otimes_{\Oc_S}\Lc_S^{\vee}$}
    \obj(30,15)[2]{$-\otimes_{\Oc_S}\Oc_S \simeq id$}
    \obj(-30,-15)[3]{$0$}
    \obj(30,-15)[4]{$\mathfrak{i}^*\mathfrak{i}_*$}
    \mor{1}{2}{$0$}
    \mor{1}{3}{$$}
    \mor{2}{4}{$$}
    \mor{3}{4}{$$}
    \enddc
\end{equation}

we see that $\mathfrak{i}^*\mathfrak{i}_*$ is equivalent to the dg functor
\begin{equation}
    \perf{X_0}\rightarrow \perf{X_0}
\end{equation}
\begin{equation*}
    \Ec\mapsto \Ec \oplus \Ec\otimes_{\Oc_{X_0}}\Lc^{\vee}_{X_0}[1].
\end{equation*}
By Remark \ref{multiplication by perfect complex in HK} we conclude that
\begin{equation}
    \Mv_X(\mathfrak{i}^*\mathfrak{i}_*)\simeq m_{\Oc_{X_0}\oplus \Lc^{\vee}_{X_0}[1]}\simeq 1-m_{\Lc^{\vee}_{X_0}}.
\end{equation}
\end{proof}
\begin{cor}\label{fundamental fiber cofiber sing}
Let $(X,s_X)\in \tlgm{S}$ and assume that $X$ is regular. Then there is a fiber-cofiber sequence in $\md_{\bu_X}(\textup{\textbf{SH}}_X)$
\begin{equation}
\begindc{\commdiag}[15]
   \obj(0,20)[1]{$\Mcor^{\vee}_X(\sing{X,s_X})$}
   \obj(0,0)[2]{$cofiber(\Mcor^{\vee}_X(t^*)\circ (1-m_{\Lc^{\vee}_{X_0}}):\Mcor^{\vee}_X(\perf{X_0})\rightarrow i_*i^*\bu_X)$}
   \obj(0,-20)[3]{$cofiber(\alpha:j_!j^*\bu_X\rightarrow j_*j^*\bu_X)$.}
   \mor{1}{2}{$$}
   \mor{2}{3}{$$}
\enddc
\end{equation}
In particular, if we apply the $\oo$-functor $i^*$, we get the following fiber-cofiber sequence:
\begin{equation}
\begindc{\commdiag}[15]
   \obj(0,20)[1]{$i^*\Mcor^{\vee}_X(\sing{X,s_X})$}
   \obj(0,0)[2]{$i^*cofiber(\Mcor^{\vee}_X(\perf{X_0})\xrightarrow{\Mcor^{\vee}_X(t^*)\circ (1-m_{\Lc^{\vee}_{X_0}})} i_*i^*\bu_X)$}
   \obj(0,-20)[3]{$i^*j_*j^*\bu_X.$}
   \mor{1}{2}{$$}
   \mor{2}{3}{$$}
   
\enddc
\end{equation}
\end{cor}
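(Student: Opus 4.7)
The plan is to prove the claimed fiber–cofiber sequence by assembling diagram \textup{(\ref{fundamental diagram motivic realization sing})} into a $3\times 3$ square in the stable $\oo$-category $\md_{\bu_X}(\textup{\textbf{SH}}_X)$ and then to identify the resulting bottom-right term with $\textup{cofib}(\alpha)$ via a short octahedral computation.

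The starting point is the top row of \textup{(\ref{fundamental diagram motivic realization sing})}, which reads
\begin{equation*}
\Mcor^{\vee}_X(\perf{X_0}) \xrightarrow{\Mcor^{\vee}_X(\mathfrak{i}_*)} i_*i^!\bu_X \to \Mcor^{\vee}_X(\sing{X,s_X}),
\end{equation*}
and is a cofiber sequence because $\Mv_X$ sends exact sequences of dg categories to fiber–cofiber sequences and because $\sing{X,s_X}\simeq \sing{X_0}$ under our regularity assumption. I will set this as the first row of a $3\times 3$ square whose middle row is
\begin{equation*}
\Mcor^{\vee}_X(\perf{X_0}) \xrightarrow{f} i_*i^*\bu_X \to C',
\end{equation*}
where $f=\Mcor^{\vee}_X(t^*)\circ(1-m_{\Lc^{\vee}_{X_0}})$ and $C':=\textup{cofib}(f)$. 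The only non-trivial commutativity to verify is that of the top-left square, with vertical arrow $i_*i^!\bu_X \to i_*i^*\bu_X$ given by the canonical natural transformation obtained by composing the counit $i_*i^!\bu_X\to\bu_X$ of $(i_*,i^!)$ with the unit $\bu_X\to i_*i^*\bu_X$ of $(i^*,i_*)$. Unwinding, the composed map $\Mcor^{\vee}_X(\perf{X_0}) \to i_*i^!\bu_X \to \bu_X \to i_*i^*\bu_X$ coincides with $\Mcor^{\vee}_X(t^*)\circ\Mcor^{\vee}_X(\mathfrak{i}^*)\circ\Mcor^{\vee}_X(\mathfrak{i}_*)$, which the previous lemma identifies with $f$. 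Applying the $3\times 3$ lemma (with trivial left column $\Mcor^{\vee}_X(\perf{X_0})=\Mcor^{\vee}_X(\perf{X_0})\to 0$) yields a cofiber sequence
\begin{equation*}
\Mcor^{\vee}_X(\sing{X,s_X}) \to C' \to \textup{cofib}\bigl(i_*i^!\bu_X \to i_*i^*\bu_X\bigr).
\end{equation*}

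The remaining step is to identify the rightmost term with $\textup{cofib}(\alpha)$. Apply the octahedral axiom to the composition $i_*i^!\bu_X \to \bu_X \to i_*i^*\bu_X$: the first map has cofiber $j_*j^*\bu_X$ by the localization triangle $i_*i^!\bu_X\to\bu_X\to j_*j^*\bu_X$, and the second map has cofiber $j_!j^*\bu_X[1]$ by rotation of the triangle $j_!j^*\bu_X\to\bu_X\to i_*i^*\bu_X$. The octahedral therefore produces a cofiber sequence
\begin{equation*}
j_*j^*\bu_X \to \textup{cofib}\bigl(i_*i^!\bu_X \to i_*i^*\bu_X\bigr) \to j_!j^*\bu_X[1],
\end{equation*}
which rotates to $j_!j^*\bu_X \to j_*j^*\bu_X \to \textup{cofib}\bigl(i_*i^!\bu_X \to i_*i^*\bu_X\bigr)$. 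By the naturality of the localization triangles, the connecting map $j_!j^*\bu_X \to j_*j^*\bu_X$ must agree with the canonical natural transformation $\alpha$. Hence $\textup{cofib}(i_*i^!\bu_X\to i_*i^*\bu_X)\simeq \textup{cofib}(\alpha)$, and combining with the previous display yields the claimed fiber–cofiber sequence. The second assertion follows immediately by applying the exact $\oo$-functor $i^*$, which preserves fiber–cofiber sequences.

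The only subtle point, and likely the main obstacle to writing everything rigorously, is verifying that the map $j_!j^*\bu_X \to j_*j^*\bu_X$ produced by the octahedral argument genuinely coincides with the natural transformation $\alpha$, rather than merely some map with the same source and target; this should be handled by a careful naturality argument using the compatibility of the $6$-functor formalism with the two localization triangles.
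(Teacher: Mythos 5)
Your proof is correct and follows essentially the same route as the paper's: the $3\times 3$ square with trivial left column is exactly the octahedral axiom applied to the composite $\Mcor^{\vee}_X(\perf{X_0})\to i_*i^!\bu_X\to i_*i^*\bu_X$ from diagram (\ref{fundamental diagram motivic realization sing}), and your explicit octahedral identification of $\textup{cofib}(i_*i^!\bu_X\to i_*i^*\bu_X)$ with $\textup{cofib}(\alpha)$ supplies the content the paper delegates to a citation. The only point to make explicit in the final step is that passing from $i^*\textup{cofib}(\alpha)$ to $i^*j_*j^*\bu_X$ uses $i^*j_!\simeq 0$, which is immediate from your rotated triangle $j_!j^*\bu_X\to j_*j^*\bu_X\to \textup{cofib}(i_*i^!\bu_X\to i_*i^*\bu_X)$.
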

\begin{proof}
The second statement is an immediate consequence of the first and of the equivalence $i^*j_!\simeq 0$. The first fiber-cofiber sequence can be obtained by applying the octahedreon axiom to the triangle
\begin{equation*}
    \begindc{\commdiag}[15]
    \obj(-30,15)[1]{$\Mv_X(\perf{X_0})$}
    \obj(30,15)[2]{$i_*i^!\bu_X$}
    \obj(30,-15)[3]{$i_*i^*\bu_X$}
    \mor{1}{2}{$$}
    \mor{2}{3}{$$}
    \mor{1}{3}{$\Mv_X(i^*)\circ \Mv_X(\mathfrak{i}_*)$}[\atright,\solidarrow]
    \enddc
\end{equation*}
which appears in diagram (\ref{fundamental diagram motivic realization sing}) and by the fact that
\begin{equation*}
    cofib(\alpha)\simeq cofib(i_*i^!\bu_X\rightarrow i_*i^*\bu_X)
\end{equation*}
(see \cite[Lemma 4.4.2.8]{pi20}).
\end{proof}
Let $\Mv_{S,\Q}$ denote the composition
\begin{equation}
    \dgcatmo \rightarrow \md_{\bu_{S}}(\sh)^{\otimes}\xrightarrow{-\otimes \textup{H}\Q} \md_{\bu_{S,\Q}}(\sh)^{\otimes}.
\end{equation}
We shall now study the commutative algebra object 
\begin{equation}
\Mcor^{\vee}_{S,\Q}(\sing{S,0})\underbracket{\simeq}_{S \text{ regular}}\Mcor^{\vee}_{S,\Q}(\sing{S_0}).
\end{equation}
This is a particularly important object for our purposes as, for every $(X,s_X)\in \tlgm{S}$, the $\Q$-linear motivic realization of $\sing{X,s_X}$ lies in the category of $\Mcor^{\vee}_{S,\Q}(\sing{S_0})$-modules $\md_{\Mcor^{\vee}_{S,\Q}(\sing{S_0})}(\sh)$.
\begin{prop}\label{motivic realization sing(S,0) 1}
There is an equivalence in $\textup{CAlg}(\sh)$:
\begin{equation}
    \Mcor^{\vee}_{S,\Q}(\sing{S,0})\simeq cofiber(\bu_{S,\Q}\xrightarrow{1-m_{\Lc^{\vee}_S}}\bu_{S,\Q})
\end{equation}
Here 
\begin{equation}
    cofiber(\bu_{S,\Q}\xrightarrow{1-m_{\Lc^{\vee}_S}}\bu_{S,\Q}):=\bu_{S,\Q}\otimes_{\textup{H}\Q[t]}\textup{H}\Q \in \textup{CAlg}(\sh),
\end{equation}
where we consider the morphisms $\textup{H}\Q[t]\rightarrow \textup{H}\Q$ induced by $t\mapsto 0$ and $\textup{H}\Q[t]\rightarrow \bu_{S,\Q}$ induced by $t\mapsto 1-m_{\Lc_S^{\vee}}$.
\end{prop}
\begin{proof}
The underlying object of the commutative algebra
$\bu_{S,\Q}\otimes_{\textup{H}\Q[t]}\textup{H}\Q$ is $cofiber(\bu_{S,\Q}\xrightarrow{1-m_{\Lc^{\vee}_S}}\bu_{S,\Q})$. This follows easily from the fact that 
\begin{equation}
    \textup{H}\Q[t]\xrightarrow{t}\textup{H}\Q[t]
\end{equation}
is a free resolution of $\textup{H}\Q$.
Consider the diagram
\begin{equation}
    \begindc{\commdiag}[20]
    \obj(-40,10)[1]{$S_0$}
    \obj(0,10)[2]{$S$}
    \obj(50,10)[3]{$\emptyset$}
    \obj(-40,-10)[4]{$S$}
    \obj(0,-10)[5]{$\V:=\V(\Lc_S)$}
    \obj(50,-10)[6]{$\mathcal{U}=\V-S$,}
    \mor{1}{2}{$\mathfrak{i}$}
    \mor{3}{2}{$j$}
    \mor{4}{5}{$i_0$}
    \mor{6}{5}{$j_0$}
    \mor{1}{4}{$$}
    \mor{2}{5}{$i_0$}
    \mor{3}{6}{$$}
    \enddc
\end{equation}
where both squares are (homotopy) cartesian. Notice that in this case $\pi_0(S_0)=S$ and therefore $i=\mathfrak{i}\circ t=id$. Then, the fiber-cofiber sequence of the previous corollary gives us an equivalence
\begin{equation}
    \Mcor^{\vee}_{S,\Q}(\sing{S_0})\simeq cofiber(\Mcor^{\vee}_{S,\Q}(\perf{S_0}\xrightarrow{\Mcor^{\vee}_{S,\Q}(t^*)\circ (1-m_{\Lc^{\vee}_S})}\bu_{S,\Q})).
\end{equation}
Since the square
\begin{equation}
    \begindc{\commdiag}[18]
    \obj(-40,15)[1]{$\Mcor^{\vee}_{S,\Q}(\perf{S_0})$}
    \obj(40,15)[2]{$\Mcor^{\vee}_{S,\Q}(\perf{S_0})$}
    \obj(-40,-15)[3]{$\Mcor^{\vee}_{S,\Q}(\perf{S})$}
    \obj(40,-15)[4]{$\Mcor^{\vee}_{S,\Q}(\perf{S})$}
    \mor{1}{2}{$1-m_{\Lc^{\vee}_{S_0}}$}
    \mor{1}{3}{$\Mcor^{\vee}_S(t^*)$}
    \mor{2}{4}{$\Mcor^{\vee}_S(t^*)$}
    \mor{3}{4}{$1-m_{\Lc^{\vee}_{S}}$}
    \enddc
\end{equation}
is commutative up to coherent homotopy, to get the first desired equivalence
\begin{equation}
    \Mv_{S,\Q}(\sing{S,0})\simeq \Mv_{S,\Q}(\sing{S_0})
\end{equation}
\begin{equation*}
\simeq cofiber(\Mcor^{\vee}_{S,\Q}(\perf{S_0}\xrightarrow{\Mcor^{\vee}_{S,\Q}(t^*)\circ (1-m_{\Lc^{\vee}_{S}})}\bu_{S,\Q}))
\end{equation*}
it suffices to show that $\Mcor^{\vee}_{S,\Q}(t^*)$ is an equivalence. This is true as $\mathfrak{i}\circ t=id$ and $t\circ \mathfrak{i}$ is homotopic to the identity (see \cite[Remark 3.31]{brtv}).
\end{proof}
\begin{prop}\label{motivic realization sing(S,0) 2}
There is an equivalence in $\textup{CAlg}(\sh)$
\begin{equation}
     \Mcor^{\vee}_{S,\Q}(\sing{S,0}) \simeq i_0^*j_{0*}\bu_{\mathcal{U},\Q}.
\end{equation}
Here $i_0:S\rightarrow \V(\Lc_S)$ is the zero section, $j_0:\mathcal{U}=\V(\Lc_S)-S\rightarrow \V(\Lc_S)$ the open complementary.

\end{prop}
\begin{proof}
Let $\V:=\V(\Lc_S)$. Consider the equivalence that we get from the localization sequence of $(i_0,j_0)$
\begin{equation}
    i_0^*j_{0*}\bu_{\mathcal{U},\Q}\simeq cofiber(i^*_0i_{0*}i^!_0\bu_{\V,\Q}\simeq i_0^!\bu_{\V,\Q}\xrightarrow{c} \bu_{S,\Q}\simeq i^*_0\bu_{\V,\Q}),
\end{equation}
where the map $c$ is the one induced by the counit $i_{0*}i_0^!\bu_{\V,\Q}\rightarrow \bu_{\V,\Q}$. Notice that $i_0:S\rightarrow V$ is a closed embedding between regular schemes. In particular, absolute purity holds. It follows from \cite[Remark 13.5.5]{cd19} that the composition
\begin{equation}
    \bu_{\V,\Q}\rightarrow i_{0*}i^*_0\bu_{\V,\Q}\xrightarrow{\eta_{i_0}}\bu_{\V,\Q}
\end{equation}
corresponds to $1-m_{\Lc^{\vee}_S}$, as the conormal sheaf of $i_0:S\rightarrow \V$ is $\Lc_{S}^{\vee}$. If we apply $i^!$ we obtain
\begin{equation}
   i^!(1-m_{\Lc^{\vee}_S}):  i^!\bu_{\V,\Q}\xrightarrow{c}\bu_{S,\Q}\underbracket{\xrightarrow{i^!\eta_{i_0}'}}_{\simeq \text{ abs. pur.}} i^!\bu_{\V,\Q}, 
\end{equation}
which under the equivalence $\bu_{S,\Q}\simeq i^!\bu_{\V,\Q}$ corresponds to $1-m_{\Lc^{\vee}_S}$.
\end{proof}
\begin{construction}
Let $(X,s_X)\in \tlgm{S}$. Consider the morphism
\begin{equation}
    \bu_X\xrightarrow{1-m_{\Lc^{\vee}_X}} \bu_X
\end{equation} 
Since $j:X_{\mathcal{U}}\rightarrow X$ is the complementary open subscheme to the zero locus of the section $s_X$, it follows that $j^*\Lc^{\vee}_X\simeq \Oc_{X_{\mathcal{U}}}$. In particular, 
\begin{equation}
j^*\circ (1-m_{\Lc^{\vee}_X})\simeq 1-m_{\Lc^{\vee}_{X_{\mathcal{U}}}}\sim 0.
\end{equation}
Then we obtain a morphism
\begin{equation}
   sp^{\textup{mot}}_{X}: cofiber(\bu_X\xrightarrow{1-m_{\Lc^{\vee}_X}}\bu_X)\rightarrow j_*j^*\bu_X.
\end{equation}
\end{construction}
\begin{prop}
Let $(X,s_X)\in \tlgm{S}$ and assume that $X$ is regular and that $X\xrightarrow{s_X}\V(\Lc_S)\xleftarrow{\text{zero}}S$ is Tor-independent (i.e. $ X\times_{\V(\Lc_S)}S\simeq X\times^h_{\V(\Lc_S)}S$). Then
\begin{equation}
    i^*\Mcor^{\vee}_X(\sing{X,s_X})\simeq fiber(i^*sp^{\textup{mot}}_X).
\end{equation}
\begin{proof}
This follows immediately from the octahedron property applied to the triangle in the following diagram
\begin{equation}
    \begindc{\commdiag}[20]
    \obj(-30,15)[1]{$\bu_{X_0}$}
    \obj(0,15)[2]{$i^!\bu_X$}
    \obj(40,15)[3]{$i^*\Mcor^{\vee}_X(\sing{X_0})$}
    \obj(0,0)[4]{$\bu_{X_0}$}
    \obj(0,-15)[5]{$i^*j_*j^*\bu_X$}
    \mor{1}{2}{$$}
    \mor{2}{3}{$$}
    \mor{2}{4}{$$}
    \mor{4}{5}{$$}
    \mor{1}{4}{$1-m_{\Lc^{\vee}_{X_0}}$}[\atright,\solidarrow]
    \enddc
\end{equation}
and from the compatibility of $1-m_{\Lc^{\vee}_X}$ with pullbacks.
\end{proof}
\end{prop}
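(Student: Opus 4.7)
The plan is to apply the octahedron axiom to the triangle sketched in the statement, exploiting the Tor-independence hypothesis to simplify the formula from Corollary \ref{fundamental fiber cofiber sing}. First, I would observe that Tor-independence forces $X_0 \simeq X \times_{\V(\Lc_S)} S$ to coincide with its truncation, so $t : \pi_0(X_0) \to X_0$ is an equivalence; in particular $\mathfrak{i}$ and $i$ agree, $\Mv_X(t^*)$ is an equivalence, and $\Mv_X(\perf{X_0}) \simeq i_* \bu_{X_0} \simeq i_* i^* \bu_X$.

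Next, I would consider the composable pair (obtained after applying $i^*$ to the relevant arrows from diagram \eqref{fundamental diagram motivic realization sing})
\begin{equation*}
\bu_{X_0} \xrightarrow{i^* \Mv_X(\mathfrak{i}_*)} i^! \bu_X \longrightarrow \bu_{X_0},
\end{equation*}
where the second map arises by applying $i^*$ to the localization fiber sequence $i_* i^! \bu_X \to \bu_X \to j_* j^* \bu_X$. By the lemma preceding Corollary \ref{fundamental fiber cofiber sing}, this composition is $1 - m_{\Lc^\vee_{X_0}} = i^*(1 - m_{\Lc^\vee_X})$. The octahedron axiom then produces a fiber sequence
\begin{equation*}
i^* \Mv_X(\sing{X_0}) \longrightarrow i^* cofiber(\bu_X \xrightarrow{1 - m_{\Lc^\vee_X}} \bu_X) \longrightarrow i^* j_* j^* \bu_X,
\end{equation*}
where the three terms arise, respectively, as the cofiber of the first arrow (this is the top fiber sequence of \eqref{fundamental diagram motivic realization sing} after pullback), as the cofiber of the composition (using $i^* i_* \simeq \mathrm{id}$ and the compatibility of $m_{(-)}$ with pullbacks to pull the cofiber past $i^*$), and as the cofiber of the second arrow (equal to $i^* j_* j^* \bu_X$ by the localization fiber sequence).

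Finally, I would identify the second map of this fiber sequence with $i^* sp^{\mathrm{mot}}_X$. By the octahedron construction, it is the unique factorization of the natural map $\bu_{X_0} = i^* \bu_X \to i^* j_* j^* \bu_X$ through $i^* cofiber(\bu_X \xrightarrow{1 - m_{\Lc^\vee_X}} \bu_X)$; by the very construction of $sp^{\mathrm{mot}}_X$, so is $i^* sp^{\mathrm{mot}}_X$, and uniqueness forces the two to agree. Taking fibers then yields the claimed equivalence. The main obstacle I foresee is essentially bookkeeping: ensuring that all identifications above are coherent in $\md_{\bu_X}(\textup{\textbf{SH}}_X)$, in particular that the octahedron is assembled from the various squares of \eqref{fundamental diagram motivic realization sing} in a homotopy-coherent fashion compatible with the six-functor formalism and with the action described in Remark \ref{actions}.
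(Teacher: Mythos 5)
Your proposal is correct and takes essentially the same route as the paper: the octahedron axiom applied to the composable pair $\bu_{X_0}\rightarrow i^!\bu_X\rightarrow \bu_{X_0}$ whose composite is $1-m_{\Lc^{\vee}_{X_0}}$, followed by the compatibility of $1-m_{\Lc^{\vee}_X}$ with $i^*$ to identify the middle term and the map to $i^*j_*j^*\bu_X$ with $i^*sp^{\textup{mot}}_X$. Your additional remarks (that Tor-independence makes $t$ an equivalence, hence $\Mv_X(\perf{X_0})\simeq i_*i^*\bu_X$, and the universal-property identification of the second arrow) merely make explicit what the paper's terse proof leaves implicit.
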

\section{Monodromy-invariant vanishing cycles}\label{Monodromy-invariant vanishing cycles}
\subsection{The formalism of vanishing cycles}\label{the formalism of vanishing cycles}
We shall begin with a quick review of the formalism of vanishing cycles.
We hope that this will be useful to understand the analogy which led to the definition of monodromy-invariant vanishing cycles.
\begin{notation}\label{notation trait}
Throughout this section, $A$ will be an excellent henselian trait and $S$ will denote the associated affine scheme. Label $k$ its residue field and $K$ its fraction field. Let $\sigma$ be $Spec(k)$ and $\eta$ be $Spec(K)$. Fix algebraic closures $k^{alg}$ and $K^{alg}$ of $k$ and $K$ respectively. We will consider:
\begin{itemize}
\item The maximal separable extension $k^{sep}$ of $k$ inside $k^{alg}$. We will use the notation $\bar{\sigma}=Spec(k^{sep})$.
\item The maximal unramified extension $K^{unr}$ of $K$ in $K^{alg}$. We will use the notation $\eta^{unr}=Spec(K^{unr})$.
\item The maximal tamely ramified extension $K^t$ of $K$ inside $K^{alg}$. We will use the notation $\eta^t=Spec(K^t)$.
\item The maximal separable extension $K^{sep}$ of $K$ inside $K^{alg}$. We will use the notation $\bar{\eta}=Spec(K^{sep})$.
\end{itemize}
Moreover, we will fix an uniformizer $\pi$. 
\end{notation}
\begin{rmk}
It is well known that there is an equivalence between the category of separable extensions of $k$ and that of unramified extensions of $K$. In particular, $\textup{Gal}(k^{sep}/k)\simeq \textup{Gal}(K^{unr}/K)$. This, together with the fundamental theorem of Galois theory, implies that there is an exact sequence of groups
\begin{equation}
    1\rightarrow \textup{Gal}(K^{sep}/K^{unr})\rightarrow \textup{Gal}(K^{sep}/K)\rightarrow \textup{Gal}(k^{sep}/k)\rightarrow 1.
\end{equation}
The Galois group on the left is called the inertia group and it is usually denoted by $I$. The chain of extensions $K^{unr}\subseteq K^{t}\subseteq K^{sep}$ gives us the following decomposition of $I$:
\begin{equation}
    1\rightarrow \underbracket{\textup{Gal}(K^{sep}/K^{t})}_{=:I_w} \rightarrow I \rightarrow \underbracket{\textup{Gal}(K^{sep}/K)}_{=:I_t}\rightarrow 1.
\end{equation}
The Galois group $I_w$ is called wild inertia group, while $I_t$ is called tame inertia group. 
See \cite{se62} for more details.
\end{rmk}
\begin{rmk}
Let $A^{sh}$ be a strict henselianisation of $A$ and let $\bar{S}=Spec(A^{sh})$. We then have the following picture
\begin{equation}
    \begindc{\commdiag}[20]
    \obj(0,15)[1]{$\bar{\sigma}$}
    \obj(30,15)[2]{$\bar{S}$}
    \obj(60,15)[3]{$\eta^{unr}$}
    \obj(0,-5)[4]{$\sigma$}
    \obj(30,-5)[5]{$S$}
    \obj(60,-5)[6]{$\eta$}
    \obj(90,15)[7]{$\eta^t$}
    \obj(120,15)[8]{$\bar{\eta}$}
    \mor{1}{2}{$\bar{i}$}[\atright,\injectionarrow]
    \mor{3}{2}{$j_{unr}$}[\atleft,\injectionarrow]
    \mor{4}{5}{$i$}[\atright,\injectionarrow]
    \mor{6}{5}{$j$}[\atright,\injectionarrow]
    \mor{1}{4}{$v_{\sigma}$}
    \mor{2}{5}{$v$}
    \mor{3}{6}{$v_{\eta}$}
    \mor{7}{3}{$$}
    \mor{8}{7}{$$}
    \cmor((90,18)(90,21)(87,24)(60,24)(34,24)(31,21)(31,19)) \pdown(60,22){$j_t$}
    \cmor((120,18)(120,24)(117,27)(74,27)(32,27)(29,24)(29,19)) \pdown(75,32){$\bar{j}$}
    \enddc
\end{equation}
where both squares are cartesian. In particular, notice that $(\sigma,\eta)$ and $(\bar{\sigma},\eta^{unr})$ form closed-open coverings of $S$ and $\bar{S}$ respectively.
\end{rmk}
Let $p:X\rightarrow S$ be of finite type and let $\bar{X}=X\times_S \bar{S}$. Consider the following diagram, cartesian over the base (this is also called Grothendieck's trick in the literature)
\begin{equation}
    \begindc{\commdiag}[20]
    \obj(0,10)[1]{$X_{\bar{\sigma}}$}
    \obj(30,10)[2]{$\bar{X}$}
    \obj(60,10)[3]{$X_{\eta^{unr}}$}
    \obj(90,10)[4]{$X_{\eta^t}$}
    \obj(120,10)[5]{$X_{\bar{\eta}}$}
    \obj(0,-10)[6]{$\bar{\sigma}$}
    \obj(30,-10)[7]{$\bar{S}$}
    \obj(60,-10)[8]{$\eta^{unr}$}    \obj(90,-10)[9]{$\eta^t$}
    \obj(120,-10)[10]{$\bar{\eta}$.}
    \mor{1}{2}{$$}[\atright,\injectionarrow]
    \mor{3}{2}{$$}[\atright,\injectionarrow]
    \mor{4}{3}{$$}
    \mor{5}{4}{$$}
    \mor{6}{7}{$$}[\atright,\injectionarrow]
    \mor{8}{7}{$$}[\atright,\injectionarrow]
    \mor{9}{8}{$$}
    \mor{10}{9}{$$}
    \mor{1}{6}{$$}
    \mor{2}{7}{$$}
    \mor{3}{8}{$$}
    \mor{4}{9}{$$}
    \mor{5}{10}{$$}
    \enddc
\end{equation}

\begin{rmk}
The $\oo$-category $\shvl(\bar{X})$ is the recollement of $\shvl(X_{\bar{\sigma}})$ and $\shvl(X_{\eta^{unr}})$ in the sense of \cite[Appendix A.8]{ha}\footnote{Strictly speaking, one should consider the full subcategories of $\shvl(\bar{X})$ spanned by the images of $\bar{i}_*$ and $j_{\eta^{unr}}$ which are closed under equivalences.}. Indeed, $\shvl(\bar{X})$ is a stable $\oo$-category and therefore it admits finite limits. Moreover, both $\bar{i}_*:\shvl(X_{\bar{\sigma}})\rightarrow \shvl(\bar{X})$ and $j^{unr}_*: \shvl(X_{\eta^{unr}})\rightarrow \shvl(\bar{X})$ are fully faithful $\oo$-functors: these are immediate consequences of the proper and smooth base change theorems applied to the cartesian squares
\begin{equation}
    \begindc{\commdiag}[15]
    \obj(-15,15)[1]{$X_{\bar{\sigma}}$}
    \obj(15,15)[2]{$X_{\bar{\sigma}}$}
    \obj(-15,-15)[3]{$X_{\bar{\sigma}}$}
    \obj(15,-15)[4]{$\bar{X}$}
    \mor{1}{2}{$id$}
    \mor{1}{3}{$id$}
    \mor{2}{4}{$\bar{i}$}
    \mor{3}{4}{$\bar{i}$}
    \obj(45,15)[5]{$X_{\bar{\sigma}}$}
    \obj(75,15)[6]{$X_{\bar{\sigma}}$}
    \obj(45,-15)[7]{$X_{\bar{\sigma}}$}
    \obj(75,-15)[8]{$\bar{X}$.}
    \mor{5}{6}{$id$}
    \mor{5}{7}{$id$}
    \mor{6}{8}{$j_{\eta^{unr}}$}
    \mor{7}{8}{$j_{\eta^{unr}}$}
    \enddc
\end{equation}
It is known that the conditions of \cite[Definition A.8.1]{ha} are verified. In particular, using the equivalence of $\shvl(X_{\eta^{unr}})$ with the $\oo$-category of $\ell$-adic sheaves on the generic geometric fiber endowed with a continuous action of $I$  $\shvl(X_{\bar{\eta}})^I$, we have that $\shvl(\bar{X})$ is the recollement of $\shvl(X_{\bar{\sigma}})$ and $\shvl(X_{\bar{\eta}})^{I}$.
\end{rmk}
\begin{rmk}
Notice that we have the following diagram where each arrow is an equivalence:
\begin{equation}
    \begindc{\commdiag}[15]
    \obj(-30,20)[1]{$\shvl(X_{\eta^{unr}})$}
    \obj(0,-20)[2]{$\shvl(X_{\eta^t})^{I_t}$.}
    \obj(30,20)[3]{$\shvl(X_{\bar{\eta}})^{I}$}
    \mor{1}{2}{$j_{t}^*$}
    \mor{1}{3}{$\bar{j}^*$}
    \mor{2}{3}{$j_w^*$}
    \cmor((30,25)(30,30)(28,32)(0,32)(-28,32)(-30,30)(-30,25))
    \pdown(0,39){$\bar{j}_*(-)^{\textup{h}I}$}
    \mor(40,17)(13,-17){$j_{w*}(-)^{\textup{h}I_w}$}
    \mor(-13,-17)(-40,17){$j_{t*}(-)^{\textup{h}I_t}$}
    \enddc
\end{equation}
\end{rmk}
\begin{construction}
We will now sketch the construction of Deligne's topos in the $\oo$-categorical world. Notice that, by the recollement technique, the \'etale $\oo$-topos of $\bar{S}$ is the recollement of the \'etale $\oo$-topos of $\bar{\sigma}$ and that of $\eta^{unr}$. Moreover, notice that the \'etale $\oo$-topos of $\eta^{unr}$ is equivalent to the $\oo$-category of spaces with a continuous action of $I$.  For a scheme $Y$ over $\bar{\sigma}$, we consider the lax $\oo$-limit $\tilde{Y}_{et}\times_{\tilde{\bar{\sigma}}_{et}}\tilde{\bar{S}}_{et}$, which exists by \cite[pag. 614]{htt}. The decomposition of $\tilde{\bar{S}}_{et}$ gives us a digram
\begin{equation}
    \begindc{\commdiag}[20]
    \obj(-30,15)[1]{$\tilde{Y}_{et}$}
    \obj(0,15)[2]{$\tilde{Y}_{et}\times_{\tilde{\bar{\sigma}}_{et}}\tilde{\bar{S}}_{et}$}
    \obj(45,15)[3]{$\tilde{Y}_{et}\times_{\tilde{\bar{\sigma}}_{et}}\tilde{\eta^{unr}}_{et}$}
    \obj(-30,-15)[4]{$\tilde{\bar{\sigma}}_{et}$}
    \obj(0,-15)[5]{$\tilde{\bar{S}}_{et}$}
    \obj(45,-15)[6]{$\tilde{\eta^{unr}}_{et}$}
    \mor{1}{2}{$$}
    \mor{3}{2}{$$}
    \mor{1}{4}{$$}
    \mor{2}{5}{$$}
    \mor{3}{6}{$$}
    \mor{4}{5}{$$}
    \mor{6}{5}{$$}
    \enddc
\end{equation}
We will label the $\oo$-category of $\ell$-adic sheaves on $\tilde{Y}_{et}\times_{\tilde{\bar{\sigma}}_{et}}\tilde{\eta^{unr}}_{et}$ by $\shvl(Y)^I$, as it is the $\oo$-category of $\ell$-adic sheaves on $Y$ endowed with a continuous action of $I$. %Notice that it is equivalent to $\shvl(Y)^{I_t}$, the $\oo$-category of $\ell$-adic sheaves on $Y$ endowed with a continuous action of $I_t$. 
The $\oo$-category $\shvl(Y)^{I_t}$ of $\ell$-adic sheaves on $Y$ endowed with a continuous action of $I_t$ identifies with the full subcategory of $\shvl(Y)^I$ such that the induced action of $I_w$ is trivial.
\end{construction}
\begin{defn} Let $p:\bar{X}\rightarrow \bar{S}$ be an $\bar{S}$-scheme. 
Let $\Ec\in \shvl(\bar{X}_{\eta^{unr}})$. The $\ell$-adic sheaf of tame nearby cycles of $\Ec$ is defined as
\begin{equation}
\Psi_t(\Ec):=\bar{i}^*j_{t*}\Ec_{|X_{\eta_t}} \in \shvl(X_{\bar{\sigma}})^{I_t}.
\end{equation}
Analogously, the $\ell$-adic sheaf of nearby cycles of $\Ec$ is defined as
\begin{equation}
    \Psi(\Ec):=\bar{i}^*\bar{j}_*\Ec_{|X_{\bar{\eta}}}\in \shvl(X_{\bar{\sigma}})^I.
\end{equation}
\end{defn}
\begin{rmk}
With the same notation as above, the following equivalence holds in $\shvl(\bar{X}_{\bar{\sigma}})^{I_t}$:
\begin{equation}
    \Psi_t(\Ec)\simeq \Psi(\Ec)^{\textup{h}I_w}.
\end{equation}
\end{rmk}
\begin{defn}
Let $p:\bar{X}\rightarrow \bar{S}$ be an $\bar{S}$-scheme. Let $\Fc \in \shvl(\bar{X})$. The unit of the adjuction $(j_t^*,j_{t*})$ induces a morphism in $\shvl(\bar{X}_{\bar{\sigma}})^{I_t}$
\begin{equation}
    \bar{i}^*\Fc \rightarrow \Psi_t(\Fc_{\bar{X}_{\eta^{unr}}}),
\end{equation}
where we regard the object on the left endowed with the trivial $I_t$-action. The cofiber of this morphism is by definition the $\ell$-adic sheaf of tame vanishing cycles of $\Fc$, which we will denote by $\Phi_t(\Fc)$. In a similar way, we define the $\ell$-adic sheaf of vanishing cycles $\Phi(\Fc)$ as the cofiber of the morphism (called the specialization morphism in literature)
\begin{equation}
    \bar{i}^*\Fc \rightarrow \Psi(\Fc_{\bar{X}_{\bar{\eta}}})
\end{equation}
induced by the unit of the adjunction $(\bar{j}^*,\bar{j}_*)$.
\end{defn}
\subsection{Monodromy-invariant vanishing cycles}
\begin{context}
Assume that $S$ is strictly henselian, i.e. that $k$ is a separably closed field and that $p:X\rightarrow S$ is a proper morphism.
\end{context}
For our purposes, we are interested in the image of $\Phi(\Ql{,X})$ via the $\oo$-functor 
\begin{equation*}
    (-)^{\textup{h}I}:\shvl(X_{\sigma})^{I}\rightarrow \shvl(X_{\sigma}).
\end{equation*}
It is then important to remark that it is possible to determine $p_{\sigma*}\Phi(\Ql{,X})^{\textup{h}I}$ without ever mentioning this $\oo$-functor. Notice that $\Phi(\Ql{,X})^{\textup{h}I}$ is the cofiber of the image of the specialization morphism via $(-)^{\textup{h}I}$:
\begin{equation}\label{fiber cofiber sphI}
    \Ql{,X_0}^{\textup{h}I}\xrightarrow{(sp)^{\textup{h}I}}\Psi(\Ql{,X_{\bar{\eta}}})^{\textup{h}I}\rightarrow \Phi(\Ql{,X})^{\textup{h}I}
\end{equation}
is a fiber-cofiber sequence in $\shvl(X_{\sigma})$.
\begin{rmk}
There are equivalences in $\textup{CAlg}(\shvl(\sigma))$
\begin{itemize}
    \item $\bigl( p_{\sigma*}\Ql{,X_{\sigma}} \bigr)^{\textup{h}I}\simeq p_{\sigma*}\Ql{,X_{\sigma}}\otimes_{\Ql{,\sigma}}\Ql{,\sigma}^{\textup{h}I}$ (see \cite[Proposition 4.32]{brtv}),
    \item $\bigl( p_{\sigma*}i^*\bar{j}_*\Ql{,X_{\bar{\eta}}} \bigr)^{\textup{h}I}\simeq p_{\sigma*}i^*j_*\Ql{,X_{\eta}}$ (see \cite[Proposition 4.31]{brtv}),
    \item $\Ql{,\sigma}^{\textup{h}I}\simeq i_0^*j_{0*}\Ql{,\eta}$ (see \cite[Lemma 4.34]{brtv}).
\end{itemize}
\end{rmk}
Consider the object $p_{\sigma*}p_{\sigma}^*i_0^*j_{0*}\Ql{,\eta}$ in $\shvl(\sigma)$. The following chain of equivalences, together with the previous remark, identify it with $\bigl(p_{\sigma*}\Ql{,X_{\sigma}} \bigr)^{\textup{h}I}$:
\begin{equation}
    p_{\sigma*}p_{\sigma}^*i_0^*j_{0*}\Ql{,\eta}\simeq p_{\sigma*}\bigl(\Ql{,X_{\sigma}}\otimes_{\Ql{,X_{\sigma}}}p_{\sigma}^*i_0^*j_{0*}\Ql{,\eta}\bigr)\underbracket{\simeq}_{\text{proj.form.}}p_{\sigma*}\Ql{,X_{\sigma}}\otimes_{\Ql{,\sigma}}i_0^*j_{0*}\Ql{,\eta}.
\end{equation}
\begin{defn}\label{definition monodromy-invarian t vanishing cycles}
Consider the canonical morphism in $\shvl(X_{\sigma})$
\begin{equation}
    sp_p^{\textup{hm}}:p_{\sigma}^*i_0^*j_{0*}\Ql{,\eta}\simeq i^*p^*j_{0*}\Ql{,\eta}\rightarrow i^*j_*p_{\eta}^*\Ql{,\eta}\simeq i^*j_*\Ql{,X_{\eta}}
\end{equation}
induced by the base change natural transformation $p^*j_{0*}\rightarrow j_*p_{\eta}^*$. We will refer to it as the monodromy-invariant specialization morphism. We will refer to the cofiber of this morphism in $\shvl(X_{\sigma})$ as monodromy-invariant vanishing cycles, which we will denote $\Phi_{p}^{\textup{hm}}(\Ql{})$.
\end{defn}
In order to justify the choice of the name of this morphism, we shall prove the following:
\begin{lmm}\label{comparison monodromy invariant inertia invariant vanishing cycles}
The two arrows $p_{\sigma*}(sp)^{\textup{h}I}$ and $p_{\sigma*}sp_p^{\textup{hm}}$ are homotopic under the equivalences 
\begin{equation*}\bigl( p_{\sigma*}i^*\bar{j}_*\Ql{,X_{\bar{\eta}}} \bigr)^{\textup{h}I}\simeq p_{\sigma*}i^*j_*\Ql{,X_{\eta}},
\end{equation*}
\begin{equation*}\bigl( p_{\sigma*}\Ql{,X_{\sigma}} \bigr)^{\textup{h}I}\simeq p_{\sigma*}\Ql{,X_{\sigma}}\otimes_{\Ql{,\sigma}}\Ql{,\sigma}^{\textup{h}I}\simeq p_{\sigma*}p_{\sigma}^*i_0^*j_{0*}\Ql{,\eta}.
\end{equation*}
\end{lmm}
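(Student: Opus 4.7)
The plan is to unwind both morphisms as incarnations of a single base-change natural transformation applied to $\Ql{,\eta}$, and then verify that the identifications used on the two sides are mutually compatible.

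First, I would recall that the specialization morphism $sp\colon \Ql{,X_{\sigma}}\to \Psi(\Ql{,X_{\bar\eta}})=i^*\bar{j}_*\Ql{,X_{\bar\eta}}$ is obtained by applying $i^*$ to the unit of adjunction $\Ql{,X}\to \bar{j}_*\bar{j}^*\Ql{,X}$. Factor $\bar{j}$ as $j\circ \beta$, with $\beta\colon \bar\eta\to \eta$ coming from the separable closure; then $\bar{j}_*\Ql{,X_{\bar\eta}}\simeq j_*\beta_*\Ql{,X_{\bar\eta}}$, and since $I=\textup{Gal}(\bar\eta/\eta)$ acts on $\beta_*\Ql{,X_{\bar\eta}}$, one has $(\beta_*\Ql{,X_{\bar\eta}})^{\textup{h}I}\simeq \Ql{,X_{\eta}}$. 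This is essentially the content of the second equivalence in the Remark, originating from \cite[Proposition 4.31]{brtv}.

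Next, I would apply $p_{\sigma*}(-)^{\textup{h}I}$ to $sp$. On the source, the $I$-action is trivial on $\Ql{,X_{\sigma}}$, so using the projection formula (first bullet of the Remark, \cite[Proposition 4.32]{brtv}) together with the equivalence $\Ql{,\sigma}^{\textup{h}I}\simeq i_0^*j_{0*}\Ql{,\eta}=i^*j_*\Ql{,\eta}$ (third bullet, \cite[Lemma 4.34]{brtv}), the source is identified with
\begin{equation*}
p_{\sigma*}\Ql{,X_{\sigma}}\otimes_{\Ql{,\sigma}}\Ql{,\sigma}^{\textup{h}I}\simeq p_{\sigma*}\Ql{,X_{\sigma}}\otimes_{\Ql{,\sigma}}i^*j_*\Ql{,\eta}\simeq p_{\sigma*}p_{\sigma}^* i^* j_*\Ql{,\eta}.
\end{equation*}
On the target, Step 1 identifies $(p_{\sigma*}i^*\bar{j}_*\Ql{,X_{\bar\eta}})^{\textup{h}I}$ with $p_{\sigma*}i^*j_*\Ql{,X_{\eta}}$, as asserted.

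By its very definition, $p_{\sigma*}(sp_p^{\textup{hm}})$ is obtained by applying $p_{\sigma*}i^*$ to the Beck--Chevalley base-change natural transformation $p^*j_{0*}\to j_*p_{\eta}^*$, evaluated at $\Ql{,\eta}$. The remaining task is to check that, under the identifications above, $p_{\sigma*}(sp)^{\textup{h}I}$ coincides with this base-change map. This reduces to the assertion that the unit $\Ql{,X}\to \bar{j}_*\Ql{,X_{\bar\eta}}$, once one takes $\textup{h}I$-fixed points and uses the factorization $\bar{j}=j\circ\beta$, recovers the unit $\Ql{,X}\to j_*\Ql{,X_{\eta}}$; pulling back along $p$ and applying $i^*$ produces exactly the base-change morphism in the definition of $sp_p^{\textup{hm}}$.

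The main obstacle is the coherence check in this last step: one has to verify that the three equivalences cited in the Remark, which are proved independently in \cite{brtv}, fit together compatibly with the base-change natural transformation appearing in the definition of $sp_p^{\textup{hm}}$. Concretely, one must trace through the proofs of \cite[Propositions 4.31 and 4.32]{brtv} and \cite[Lemma 4.34]{brtv} and check that all three equivalences arise from the same unit/base-change machinery, so that the squares relating $p_{\sigma*}(sp)^{\textup{h}I}$ and $p_{\sigma*}(sp_p^{\textup{hm}})$ commute at the level of $\oo$-categories. Given this bookkeeping, naturality does the rest.
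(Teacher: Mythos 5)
There is a genuine gap, and you have in fact located it yourself: the ``coherence check in this last step'' that you defer to bookkeeping and naturality is the entire content of the lemma, and naturality alone will not close it. The source of both maps is $p_{\sigma*}\Ql{,X_{\sigma}}\otimes_{\Ql{,\sigma}}\Ql{,\sigma}^{\textup{h}I}$, which is not just $p_{\sigma*}\Ql{,X_{\sigma}}$: the factor $\Ql{,\sigma}^{\textup{h}I}$ is a nontrivial two-stage object, and a morphism out of the tensor product is not pinned down by saying that ``the unit $\Ql{,X}\to\bar{j}_*\Ql{,X_{\bar\eta}}$ recovers the unit $\Ql{,X}\to j_*\Ql{,X_{\eta}}$ after taking homotopy fixed points.'' Moreover the three equivalences you import from \cite{brtv} are a priori unrelated to the Beck--Chevalley transformation $p^*j_{0*}\to j_*p_{\eta}^*$ defining $sp_p^{\textup{hm}}$, so the commutativity of the comparison square is exactly what has to be proved, not what can be assumed.

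The missing idea is multiplicative. Both $p_{\sigma*}(sp)^{\textup{h}I}$ and $p_{\sigma*}sp_p^{\textup{hm}}$ are morphisms of commutative algebras, and the tensor product is the \emph{coproduct} in $\textup{CAlg}(\shvl(\sigma))$; hence two algebra maps out of $p_{\sigma*}\Ql{,X_{\sigma}}\otimes_{\Ql{,\sigma}}\Ql{,\sigma}^{\textup{h}I}$ are homotopic as soon as their precompositions with the two canonical maps from the factors $p_{\sigma*}\Ql{,X_{\sigma}}$ and $\Ql{,\sigma}^{\textup{h}I}$ agree. For $p_{\sigma*}(sp)^{\textup{h}I}$ these two restrictions are identified (essentially by the proof of the main theorem of \cite{brtv}) with the maps induced by the units $1\to j_*j^*$ and $1\to p_{\eta*}p_{\eta}^*$; one then checks, via two explicit commutative diagrams unwinding the base-change transformation (adjoint to $j_{0*}\to j_{0*}p_{\eta*}p_{\eta}^*$), that the same two units govern the restrictions of $p_{\sigma*}sp_p^{\textup{hm}}$. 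Without reducing to the two algebra generators in this way, your final step has no mechanism for concluding, so as written the argument does not establish the homotopy.
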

\begin{proof}
We shall view both arrows as maps $p_{\sigma*}\Ql{,X_{\sigma}}\otimes_{\Ql{,\sigma}}\Ql{,\sigma}^{\textup{h}I}\rightarrow p_{\sigma*}i^*j_*\Ql{,X_{\eta}}$.

Since the tensor product defines a cocartesian symmetric monoidal structure on $\textup{CAlg}(\shvl(\sigma))$ (see \cite{ha}), it suffices to show that the maps of commutative algebras obtained from $p_{\sigma*}(sp)^{\textup{h}I}$ and $p_{\sigma*}sp_p^{\textup{hm}}$ by precomposition with $p_{\sigma*}\Ql{,X_{\sigma}}\rightarrow p_{\sigma*}\Ql{,X_{\sigma}}\otimes_{\Ql{,\sigma}}\Ql{,\sigma}^{\textup{h}I}$ and $\Ql{,\sigma}^{\textup{h}I}\rightarrow p_{\sigma*}\Ql{,X_{\sigma}}\otimes_{\Ql{,\sigma}}\Ql{,\sigma}^{\textup{h}I}$ respectively are equivalent. It is essentially the proof of the main theorem in \cite{brtv} that the precompositions of $p_{\sigma*}(sp)^{\textup{h}I}$ with these two maps are induced by the canonical morphisms $1\rightarrow j_*j^*$ and $1\rightarrow p_{\eta*}p_{\eta}^*$. We are left to show that the same holds true if we consider $sp_p^{\textup{hm}}$ instead of $(sp)^{\textup{h}I}$. Recall that $sp_p^{\textup{hm}}$ is induced by the base change morphism $p^*j_{0*}\rightarrow j_*p_{\eta}^*$, i.e. the morphism which corresponds under adjunction to $j_{0*}\rightarrow j_{0*}p_{\eta*}p_{\eta}^*$, induced by the unit of the adjunction $(p_{\eta}^*,p_{\eta*})$. From this we obtain the following diagram
\begin{equation}
    \begindc{\commdiag}[14]
    \obj(-60,20)[1]{$i_0^*j_{0*}\Ql{,\eta}$}
    \obj(40,20)[2]{$i_0^*j_{0*}p_{\eta*}p_{\eta}^*\Ql{,\eta}\simeq p_{\sigma*}i^*j_*p_{\eta}^*\Ql{,\eta}$}
    \obj(-60,0)[3]{$p_{\sigma*}p_{\sigma}^*i_0^*j_{0*}\Ql{,\eta}$}
    \obj(40,0)[4]{$p_{\sigma*}p_{\sigma}^*p_{\sigma*}i^*j_*p_{\eta}^*\Ql{,\eta}$}
    \obj(40,-20)[6]{$p_{\sigma*}i^*j_*p_{\eta}^*\Ql{,\eta}$.}
    \mor{1}{2}{$1\rightarrow p_{\eta*}p_{\eta}^*$}
    \mor{1}{3}{$1\rightarrow p_{\sigma*}p_{\sigma}^*$}
    \mor{2}{4}{$$}
    \mor{3}{4}{$$}
    \obj(-120,0)[5]{$p_{\sigma*}p_{\sigma}^*\Ql{,\sigma}$}
    \mor{5}{3}{$1\rightarrow j_{0}^*j_{0*}$}
    \mor{3}{6}{$p_{\sigma*}sp_p^{\textup{hm}}$}[\atright,\solidarrow]
    \mor{4}{6}{$$}
    \cmor((65,16)(68,15)(70,14)(75,0)(70,-14)(68,-15)(65,-16)) \pleft(80,0){$id$}
    \cmor((-120,-5)(-70,-16)(-50,-19)(-35,-20)(-20,-20)(0,-20)(15,-20)) \pright(0,-20){$$}
    \enddc
\end{equation}
In particular the composition $i_0^*j_{0*}\Ql{,\eta}\rightarrow p_{\sigma*}p_{\sigma}^*i_0^*j_{0*}\Ql{,\eta} \xrightarrow{p_{\sigma*}sp_p^{\textup{hm}}} p_{\sigma*}i^*j_*\Ql{,X_{\eta}}$ is homotopic to the map induced by $1\rightarrow p_{\eta*}p_{\eta}^*$. 

The composition $p_{\sigma*}p_{\sigma}^*\Ql{,\sigma}\rightarrow p_{\sigma*}p_{\sigma}^*i_0^*j_{0*}\Ql{,\eta} \rightarrow p_{\sigma*}i^*j_*\Ql{,X_{\eta}}$ is homotopic to the map $p_{\sigma*}\Ql{,X_{\sigma}}\rightarrow p_{\sigma*}i^*j_{*}\Ql{,X_{\eta}}$ induced by $1\rightarrow j_*j^*$ by the following commutative triangle
\begin{equation}
    \begindc{\commdiag}[18]
    \obj(-25,0)[1]{$p^*\Ql{,S}$}
    \obj(25,15)[2]{$p^*j_{0*}j_0^*\Ql{,S}$}
    \obj(25,-15)[3]{$j_*p_{\eta}^*j_0^*\Ql{,S}\simeq j_*j^*p^*\Ql{,S}$.}
    \mor{1}{2}{$1\rightarrow j_{0*}j_0^*$}
    \mor{2}{3}{$$}
    \mor{1}{3}{$1\rightarrow j_*j^*$}[\atright,\solidarrow]
    \enddc
\end{equation}
\end{proof}

\begin{rmk}
Notice that what we said above is actually true for any $\ell$-adic sheaf in the image of $p^*:\shvl(S)\rightarrow \shvl(X)$.
\end{rmk}
The advantage of this reformulation is that it allows to define inertia invariant vanishing cycles without ever mention the inertia group. We will adopt it for the situation we are interested in.

Assume that $S$ is a noetherian regular scheme.
\begin{defn}\label{mododromy-invariant vanishing cycles def}
Let $(X,s_X)$ be a twisted LG model over $(S,\Lc_S)$. Consider the diagram obtained from the zero section of the vector bundle associated to $\Lc_X$:
\begin{equation}
  \begindc{\commdiag}[18]
    \obj(0,10)[1]{$\pi_0(X_0)$}
    \obj(30,10)[2]{$X$}
    \obj(0,-10)[3]{$X$}
    \obj(30,-10)[4]{$\V(\Lc_X)$}
    \obj(80,10)[5]{$X_{\mathcal{U}}$}
    \obj(80,-10)[6]{$\mathcal{U}_X:=\V(\Lc_X)-X.$}
    \mor{1}{2}{$i$}
    \mor{1}{3}{$s_0$}
    \mor{2}{4}{$s_X$}
    \mor{3}{4}{$i_0$}[\atright,\solidarrow]
    \mor{5}{2}{$j$}[\atright,\solidarrow]
    \mor{5}{6}{$s_{\mathcal{U}}$}
    \mor{6}{4}{$j_0$}
    \enddc
\end{equation}
Define the monodromy-invariant specialization morphism associated to $(X,s_X)$ as the map in $\shvl(\pi_0(X_0))$
\begin{equation}\label{monodromy-invariant specialization morphism}
   sp_{(X,s_X)}^{\textup{mi}}: s_0^*i_0^*j_{0*}\Ql{,\mathcal{U}_X}\simeq i^*s_{X}^*j_{0*}\Ql{,\mathcal{U}_X}\rightarrow i^*j_*\Ql{,X_{\mathcal{U}}}\simeq i^*j_*s_{\mathcal{U}}^*\Ql{,\mathcal{U}_X}
\end{equation}
induced by the natural transformation $s_X^*j_{0*}\rightarrow j_*s_{\mathcal{U}}^*$. We will refer to the cofiber of $sp_{(X,s_X)}^{\textup{mi}}$ as monodromy invariant vanishing cycles of $(X,s_X)$, that we will denote $\Phi^{\textup{mi}}_{(X,s_X)}(\Ql{})$.
\end{defn}
\begin{prop}\label{cofiber first chern class}
Let $(X,s_X)$ be as above and assume $X$ regular. There is an equivalence
\begin{equation}
    i_0^*j_{0*}\Ql{,\mathcal{U}_X}\simeq cofib \bigl ( c_1(\Lc_X):\Ql{,X}(-1)[-2]\rightarrow \Ql{,X} \bigr ).
\end{equation}
\end{prop}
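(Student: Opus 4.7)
The plan is to obtain the equivalence from the standard localization triangle associated to the closed–open decomposition $(X \xrightarrow{i_0} \mathbb{V}(\mathcal{L}_X), \mathcal{U}_X \xrightarrow{j_0} \mathbb{V}(\mathcal{L}_X))$, combined with absolute purity for the zero section, and to identify the resulting connecting map with the first Chern class of $\mathcal{L}_X$.

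First I would recall the localization fiber sequence in $\shvl(\mathbb{V}(\mathcal{L}_X))$
\begin{equation*}
i_{0*}i_0^! \mathbb{Q}_{\ell,\mathbb{V}(\mathcal{L}_X)} \longrightarrow \mathbb{Q}_{\ell,\mathbb{V}(\mathcal{L}_X)} \longrightarrow j_{0*}j_0^*\mathbb{Q}_{\ell,\mathbb{V}(\mathcal{L}_X)}
\end{equation*}
and apply the $\oo$-functor $i_0^*$. Since $i_0$ is a closed immersion, $i_0^* i_{0*} \simeq \textup{id}$, and we obtain a fiber sequence in $\shvl(X)$
\begin{equation*}
i_0^! \mathbb{Q}_{\ell,\mathbb{V}(\mathcal{L}_X)} \longrightarrow \mathbb{Q}_{\ell,X} \longrightarrow i_0^*j_{0*}\mathbb{Q}_{\ell,\mathcal{U}_X}.
\end{equation*}

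Next, I would use absolute purity: since $X$ is regular and $i_0$ is the zero section of the line bundle $\V(\mathcal{L}_X) \to X$, it is a regular closed immersion of codimension $1$ between regular schemes. By absolute purity (as invoked in the proof of Lemma \ref{motivic realization sing(S,0)}, relying on \cite[Remark 13.5.5]{cd19}), there is a canonical equivalence $i_0^! \mathbb{Q}_{\ell,\mathbb{V}(\mathcal{L}_X)} \simeq \mathbb{Q}_{\ell,X}(-1)[-2]$. Substituting this into the fiber sequence yields
\begin{equation*}
\mathbb{Q}_{\ell,X}(-1)[-2] \longrightarrow \mathbb{Q}_{\ell,X} \longrightarrow i_0^*j_{0*}\mathbb{Q}_{\ell,\mathcal{U}_X},
\end{equation*}
so that $i_0^*j_{0*}\mathbb{Q}_{\ell,\mathcal{U}_X}$ is the cofiber of the left-hand arrow.

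The only remaining point is to identify this arrow with the first Chern class $c_1(\mathcal{L}_X)$. The map $\mathbb{Q}_{\ell,X}(-1)[-2] \to \mathbb{Q}_{\ell,X}$ obtained above is, by construction, the $i_0^*$-image of the counit $i_{0*}i_0^! \mathbb{Q}_{\ell,\mathbb{V}(\mathcal{L}_X)} \to \mathbb{Q}_{\ell,\mathbb{V}(\mathcal{L}_X)}$, transported across the absolute purity isomorphism. By the definition (or characterization) of the refined Gysin map, this map is multiplication by the Euler class of the normal bundle $N_{i_0}$, which for the zero section of a line bundle is $\mathcal{L}_X$ itself, so the Euler class is $c_1(\mathcal{L}_X)$. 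This is precisely the statement of \cite[Remark 13.5.5]{cd19} that was used earlier in the motivic setting for $1-m_{\Lc^\vee_S}$; the analogous identification in the $\ell$-adic context identifies the map with $c_1(\mathcal{L}_X)$. The main (and only real) subtlety is the bookkeeping of signs and twists to ensure one really gets $c_1(\mathcal{L}_X)$ rather than $c_1(\mathcal{L}_X^\vee)$, which one handles by consulting the precise conventions in \cite{cd19}. This will complete the proof.
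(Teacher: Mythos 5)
Your argument is correct and is essentially the paper's own proof: the localization triangle for the pair $(i_0,j_0)$, the identity $i_0^*i_{0*}\simeq \mathrm{id}$, absolute purity for the regular codimension-one closed immersion $i_0$, and the identification of the resulting map with the first Chern class of the (co)normal sheaf of the zero section, which is $\Lc_X$ up to dualization. The twist/dual bookkeeping you flag at the end is harmless here, since the paper later shows (Proposition \ref{compatibility chern characters}) that $c_1(\Lc_X)$ and $c_1(\Lc_X^{\vee})$ have equivalent cofibers.
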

\begin{proof}
Consider the diagram
\begin{equation*}
    X\xrightarrow{i_0}\V_X:=\V(\Lc_X)\xleftarrow{j_0}\mathcal{U}.
\end{equation*}
The $\ell$-adic sheaf $i_0^*j_{0*}\Ql{,\mathcal{U}}$ is the cofiber of the morphism $i_0^*\bigl( i_{0*}i_0^{!}\Ql{,\V_X}\xrightarrow{\text{counit}} \Ql{,\V_X} \bigr)$ in $\shvl(X)$. Recall that $i_0^*i_{0*}\simeq id$. Therefore, we can consider the following triangle
\begin{equation}
    \begindc{\commdiag}[18]
    \obj(-20,15)[1]{$i_0^!\Ql{,\V_X}$}
    \obj(20,15)[2]{$\Ql{,X}$}
    \obj(0,-15)[3]{$\Ql{,X}(-1)[-2]$.}
    \mor{1}{2}{$$}
    \mor{3}{1}{$\text{abs. pur.}$}
    \obj(-7,0)[4]{$\simeq$}
    \mor{3}{2}{$$}
    \enddc
\end{equation}
The absolute purity isomorphism is given by the class $cl(X)\in \textup{H}^2_X(\V_X,\Ql{}(1))$, whose image in $\textup{H}^2(X,\Ql{}(1))$ is $c_1(\mathcal{N}_{X|\V_X}^{\vee})$, the first Chern class of the conormal bundle (see \cite{fuj02}). The map $\Ql{,X}\rightarrow \Ql{,X}(1)[2]$ corresponding to this class is the image of the right vertical arrow in the diagram above via the $\oo$-functor $-(1)[2]$. It suffices to notice that $\mathcal{N}_{X|\V_X}^{\vee}\simeq \Lc_X$ to conclude.
\end{proof}
\section{The comparison theorem}
\subsection{\texorpdfstring{$2$}{2}-periodic \texorpdfstring{$\ell$}{l}-adic sheaves}
We shall now approach the comparison between monodromy-invariant vanishing cycles and the $\ell$-adic realization of the dg category of singularity $\sing{X_0}$. In order to do so, we need to work with the category of $\Ql{,S}(\beta)$-modules in $\shvl(S)$. Recall that there is an adjucntion of $\oo$-functors
\begin{equation}
    \shvl(S)\rightleftarrows \md_{\Ql{,S}(\beta)}(\shvl(S))
\end{equation}
given by $-\otimes_{\Ql{,S}}\Ql{,S}(\beta)$ and by the forgetful functor.

Notice that $\md_{\Ql{,\bullet}}(\shvl(\bullet))$ defines a fibered category over the category of schemes, which satisfies Grothendieck's six functors formalism.
\begin{defn}\label{Phimi}
With the same notation as above let 
\begin{equation}\label{definition spmi beta}
    s_0^*i_0^*j_{0*}\Ql{,\mathcal{U}_X}(\beta)\simeq i^*s_X^*j_{0*}\Ql{,\mathcal{U}_X}(\beta)\rightarrow i^*j_*s_{\mathcal{U}}^*\Ql{,\mathcal{U}_X}(\beta)
\end{equation}
be the arrow in $\md_{\Ql{,X}(\beta)}(\shvl(X))$ induced by the base change morphism $s_X^*j_{0*}\rightarrow j_*s_{\mathcal{U}}^*$. Denote by $\Phi^{\textup{mi}}_{(X,s_X)}(\Ql{}(\beta))$ its cofiber.
\end{defn}

\begin{prop}\label{compatibility Phimi with beta}
The maps $(\ref{definition spmi beta})$ and $sp^{\textup{mi}}_{(X,s_X)}\otimes_{\Ql{,X}}\Ql{,X}(\beta)$ (see Definition \ref{monodromy-invariant specialization morphism}) are homotopic. In particular,
there is an equivalence 
\begin{equation}
    \Phi^{\textup{mi}}_{(X,s_X)}(\Ql{})\otimes_{\Ql{,X}}\Ql{,X}(\beta)\simeq \Phi_{(X,s_X)}^{\textup{mi}}(\Ql{,X}(\beta)).
\end{equation}
in $\md_{\Ql{,X}(\beta)}\bigl( \shvl(X) \bigr)$.
\end{prop}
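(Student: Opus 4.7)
My plan is to deduce the proposition from the compatibility of the base change natural transformation with the symmetric monoidal structure on $\md_{\Ql{,\bullet}(\beta)}(\shvl(\bullet))$, inherited from the six-functors formalism.

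First, I would unpack the situation. The arrow in $(\ref{definition spmi beta})$ is induced by the base change natural transformation $s_X^* j_{0*} \to j_* s_{\mathcal{U}}^*$ applied to $\Ql{,\mathcal{U}_X}(\beta)$, and $sp^{\textup{mi}}_{(X,s_X)}$ is the same base change transformation applied to $\Ql{,\mathcal{U}_X}$. So the statement is essentially that the base change map intertwines with the lax monoidal structure of the six-functors formalism, once we tensor everywhere with the appropriate Tate package. Concretely, since $\Ql{}(\beta) = Sym(\Ql{}(1)[2])[\nu^{-1}]$ is built out of Tate twists, and Tate twists commute with $f^*$ and $f^!$ in the six functors formalism, the tensor $- \otimes_{\Ql{}} \Ql{}(\beta)$ is compatible with all $*$-pullbacks strictly. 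For $*$-pushforwards, one uses the projection formula in the form
\begin{equation*}
    f_* \bigl( \mathcal{F} \otimes f^* \mathcal{A} \bigr) \simeq f_* \mathcal{F} \otimes \mathcal{A},
\end{equation*}
which applies in particular to $\mathcal{A} = \Ql{}(\beta)$ since $\Ql{}(\beta)$ is pulled back from the base.

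Next, I would show that if we apply $- \otimes_{\Ql{,X}} \Ql{,X}(\beta)$ to the source and target of $sp^{\textup{mi}}_{(X,s_X)}$, we recover the source and target of $(\ref{definition spmi beta})$. For the source, $s_0^* i_0^* j_{0*} \Ql{,\mathcal{U}_X} \otimes_{\Ql{,X}} \Ql{,X}(\beta)$: pulling the tensor product inside $s_0^* i_0^*$ is automatic (being left adjoints that commute with colimits and with Tate twists), while $j_{0*} \Ql{,\mathcal{U}_X} \otimes \Ql{,\V_X}(\beta) \simeq j_{0*}(\Ql{,\mathcal{U}_X} \otimes j_0^* \Ql{,\V_X}(\beta)) \simeq j_{0*} \Ql{,\mathcal{U}_X}(\beta)$ by the projection formula. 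The target is handled identically.

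Then the homotopy between $sp^{\textup{mi}}_{(X,s_X)} \otimes_{\Ql{,X}} \Ql{,X}(\beta)$ and the map in $(\ref{definition spmi beta})$ comes from the naturality of the base change transformation with respect to the projection formula isomorphisms. More precisely, since the base change natural transformation $s_X^* j_{0*} \to j_* s_{\mathcal{U}}^*$ is itself a transformation of lax monoidal $\oo$-functors (in an appropriate sense inherited from the six-functors formalism, cf. \cite{cd19}), it is compatible with Tate twists and with the projection formula. Hence the two maps are canonically homotopic.

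Finally, the equivalence $\Phi^{\textup{mi}}_{(X,s_X)}(\Ql{}) \otimes_{\Ql{,X}} \Ql{,X}(\beta) \simeq \Phi^{\textup{mi}}_{(X,s_X)}(\Ql{,X}(\beta))$ follows formally, since $- \otimes_{\Ql{,X}} \Ql{,X}(\beta) : \shvl(X) \to \md_{\Ql{,X}(\beta)}(\shvl(X))$ is a symmetric monoidal colimit-preserving $\oo$-functor, so it preserves cofibers of morphisms; combined with the identification of the morphism just established, this gives the claim. The main technical point to verify is the compatibility of the base change transformation with the six-functors/Tate package; this is documented in \cite{cd19} for the formalism $\md_{\Ql{,\bullet}(\beta)}(\shvl(\bullet))$, so the core of the argument is a careful but routine unwinding rather than a genuine obstacle.
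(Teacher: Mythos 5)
Your overall strategy is the same as the paper's: identify source and target after tensoring with $\Ql{}(\beta)$ by commuting $-\otimes_{\Ql{}}\Ql{}(\beta)$ past the pullbacks and pushforwards, then argue that the base change map is compatible with these identifications, and conclude the statement about cofibers by exactness of $-\otimes_{\Ql{,X}}\Ql{,X}(\beta)$. The one place where your justification is too quick is the step
\begin{equation*}
    j_{0*}\Ql{,\mathcal{U}_X}\otimes_{\Ql{,\V_X}}\Ql{,\V_X}(\beta)\simeq j_{0*}\bigl(\Ql{,\mathcal{U}_X}\otimes j_0^*\Ql{,\V_X}(\beta)\bigr),
\end{equation*}
which you attribute to ``the projection formula.'' For a $*$-pushforward along an open immersion (where $j_{0*}\neq j_{0!}$), the projection formula is only automatic against dualizable coefficients, and $\Ql{}(\beta)$ is an infinite direct sum of Tate twists, hence not dualizable; the fact that it is pulled back from the base does not by itself give the formula. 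The paper closes exactly this gap: it writes $\Ql{,\V_X}(\beta)\simeq\bigoplus_{i\in\Z}\Ql{,\V_X}(i)[2i]$ as a filtered colimit of finite sums of invertible objects and uses that the $*$-pushforward commutes with filtered colimits in $\shvl$. You should add this point (or an equivalent compactness argument) to make your second step rigorous.

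For the compatibility of the base change map itself, you appeal to it being ``a transformation of lax monoidal $\oo$-functors'' with a reference to the literature; the paper instead proves this by hand, unwinding $s_X^*j_{0*}\rightarrow j_*s_{\mathcal{U}}^*$ as the map induced by the unit $1\rightarrow s_{\mathcal{U}*}s_{\mathcal{U}}^*$, and observing that for the closed immersion $s_{\mathcal{U}}$ one has $s_{\mathcal{U}*}=s_{\mathcal{U}!}$, so the projection formula against $\Ql{}(\beta)$ holds unconditionally there; the remaining commutation with $j_{0*}$ is the filtered-colimit argument above. Your more abstract formulation is plausible and would work if the lax monoidal compatibility were actually established in the cited source for this exact transformation, but as written it is an assertion where the paper supplies an argument. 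With these two points filled in, your proof is essentially the paper's.
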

\begin{proof}This is analogous to \cite[Proposition 4.28]{brtv}. As a first step, notice that both $*$-pullbacks and $*$-pushforwards commute with Tate and usual shifts. Since $-\otimes_{\Ql{}}\Ql{}(\beta)$ commutes with $*$-pullbacks, we have
\begin{equation}
    i^*s_X^*j_{0*}\Ql{,\mathcal{U}_X}\otimes_{\Ql{,X}}\Ql{,X}(\beta)\simeq i^*s_X^*(j_{0*}\Ql{,\mathcal{U}_X}\otimes_{\Ql{,\V_X}}\Ql{,\V_X}(\beta)),
\end{equation}
\begin{equation}
     i^*j_{*}\Ql{,X_{\mathcal{U}}}\otimes_{\Ql{,\pi_0(X)}}\Ql{,\pi_0(X)}(\beta)\simeq i^*(j_{*}\Ql{,X_{\mathcal{U}}}\otimes_{\Ql{,X}}\Ql{,X}(\beta)).
\end{equation}
Using the equivalence $\Ql{,\V_X}(\beta)\simeq \bigoplus_{i\in \Z}\Ql{,\V_X}(i)[2i]$, we see that
\begin{equation}
    j_{0*}\Ql{,\mathcal{U}_X}\otimes_{\Ql{,\V_X}}\Ql{,\V_X}(\beta)\simeq \bigoplus_{i\in \Z}(j_{0*}\Ql{,\mathcal{U}_X}(i)[2i]).
\end{equation}
We shall show that the canonical map 
\begin{equation}
    \bigoplus_{i\in \Z}(j_{0*}\Ql{,\mathcal{U}_X}(i)[2i])\rightarrow j_{0*}(\bigoplus_{i\in \Z}\Ql{,\mathcal{U_X}}(i)[2i])
\end{equation}
is an equivalence. This follows immediately from the fact that the $*$-pushforward commutes with filtered colimits and from the equivalence $\bigoplus_{i\in \Z} \Ql{,\mathcal{U}_X}(i)[2i]\simeq colim_{i\geq 0}\bigoplus_{k=-i}^{i}\Ql{,\mathcal{U}_X}(k)[2k]$. This shows that
\begin{equation}
i^*s_X^*j_{0*}\Ql{,\mathcal{U}_X}\otimes_{\Ql{,X}}\Ql{,X}(\beta)\simeq i^*s_X^*j_{0*}\Ql{,\mathcal{U}_X}(\beta).
\end{equation}
The same argument applies to show that 
\begin{equation}
i^*j_{*}\Ql{,X_{\mathcal{U}}}\otimes_{\Ql{,\pi_0(X)}}\Ql{,\pi_0(X)}(\beta)\simeq i^*j_{*}\Ql{,X_{\mathcal{U}}}(\beta).
\end{equation}
To show that the two maps are homotopic, it suffices to show that they are so before applying $i^*$. Notice that $(\Ql{,\mathcal{U}_X}\rightarrow s_{\mathcal{U}*}s_{\mathcal{U}}^*\Ql{,\mathcal{U}_X})\otimes_{\Ql{,\mathcal{U}_X}}\Ql{,\mathcal{U}_X}(\beta)$ is homotopic to $\Ql{,\mathcal{U}_X}(\beta)\rightarrow s_{\mathcal{U}*}s_{\mathcal{U}}^*\Ql{,\mathcal{U}_X}(\beta)$, as $-\otimes_{\Ql{,\mathcal{U}_X}}\Ql{,\mathcal{U}_X}(\beta)$ is compatible with the $*$-pullback and with the $!$-pushforward, which coincides with the $*$-pushforward for $s_{\mathcal{U}}$ as it is a closed morphism. For what we have said above,
\begin{equation}
    j_{0*}(\Ql{,\mathcal{U}_X}\rightarrow s_{\mathcal{U}*}s_{\mathcal{U}}^*\Ql{,\mathcal{U}_X})\otimes_{\Ql{,\mathcal{U}_X}}\Ql{,\mathcal{U}_X}(\beta)\simeq j_{0*}\Ql{,\mathcal{U}_X}(\beta)\rightarrow j_{0*}s_{\mathcal{U}*}s_{\mathcal{U}}^*\Ql{,\mathcal{U}_X}(\beta)
\end{equation}
and $-\otimes_{\Ql{}}\Ql{}(\beta)$ is compatible with $*$-pullbacks. The last assertion follows as $-\otimes_{\Ql{}}\Ql{}(\beta)$ is exact.
\end{proof}
\begin{cor}
Let $(X,s_X)\in \tlgm{S}$ and assume that $X$ is a regular scheme. The following equivalence holds in $\md_{\Ql{,X}(\beta)}(\shvl(X))$
\begin{equation}
    i_{0}^*j_{0*}\Ql{,\mathcal{U}_X}(\beta)\simeq cofiber \bigl(c_1(\Lc_X)\otimes_{\Ql{,X}}\Ql{,X}(\beta):\Ql{,X}(\beta)\rightarrow \Ql{,X}(\beta) \bigr).
\end{equation}
\end{cor}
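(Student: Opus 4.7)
The plan is to obtain this corollary by applying $-\otimes_{\Ql{,X}}\Ql{,X}(\beta)$ to the equivalence provided by Proposition \ref{cofiber first chern class} and identifying both resulting sides using compatibilities already recorded above.

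Concretely, I would start from
\[
i_0^*j_{0*}\Ql{,\mathcal{U}_X}\simeq cofib\bigl(c_1(\Lc_X):\Ql{,X}(-1)[-2]\rightarrow \Ql{,X}\bigr),
\]
and tensor both sides with $\Ql{,X}(\beta)$. Since $-\otimes_{\Ql{,X}}\Ql{,X}(\beta)$ is exact, it commutes with the formation of the cofiber. For the left-hand side, I would re-run the argument already used in the proof of Proposition \ref{compatibility Phimi with beta}: combining the compatibility of $-\otimes\Ql{}(\beta)$ with $*$-pullbacks, the decomposition $\Ql{,\V_X}(\beta)\simeq \bigoplus_{i\in\Z}\Ql{,\V_X}(i)[2i]$, and the fact that $j_{0*}$ commutes with the filtered colimit $colim_{n\geq 0}\bigoplus_{k=-n}^{n}\Ql{,\V_X}(k)[2k]$. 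This yields
\[
i_0^*j_{0*}\Ql{,\mathcal{U}_X}\otimes_{\Ql{,X}}\Ql{,X}(\beta)\simeq i_0^*j_{0*}\Ql{,\mathcal{U}_X}(\beta),
\]
which matches the left-hand side of the claimed formula.

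For the right-hand side, the shift $(-1)[-2]$ only reindexes the direct sum $\Ql{,X}(\beta)\simeq \bigoplus_{i\in\Z}\Ql{,X}(i)[2i]$, hence
\[
\Ql{,X}(-1)[-2]\otimes_{\Ql{,X}}\Ql{,X}(\beta)\simeq \Ql{,X}(\beta).
\]
Under this identification, the tensored cofiber becomes
\[
cofib\bigl(c_1(\Lc_X)\otimes_{\Ql{,X}}\Ql{,X}(\beta):\Ql{,X}(\beta)\rightarrow \Ql{,X}(\beta)\bigr),
\]
giving the desired equivalence.

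The main obstacle is a bookkeeping step rather than a substantive one: after trading $\Ql{,X}(-1)[-2]\otimes\Ql{,X}(\beta)$ for $\Ql{,X}(\beta)$, one must verify that the arrow induced by $c_1(\Lc_X)$ is indeed $c_1(\Lc_X)\otimes_{\Ql{,X}}\Ql{,X}(\beta)$ (that is, the componentwise sum of the shifted copies $c_1(\Lc_X)(i)[2i]$). This follows from the naturality of the absolute purity isomorphism used in Proposition \ref{cofiber first chern class} together with the symmetric monoidal structure of $\Ql{,X}(\beta)$, so no essentially new ingredient is required.
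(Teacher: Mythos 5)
Your proposal is correct and follows essentially the same route as the paper: the paper's proof likewise combines Proposition \ref{compatibility Phimi with beta} (to commute $-\otimes_{\Ql{,X}}\Ql{,X}(\beta)$ past $i_0^*j_{0*}$), Proposition \ref{cofiber first chern class}, and algebraic Bott periodicity $\Ql{,X}(\beta)(-1)[-2]\simeq \Ql{,X}(\beta)$. Your extra remarks on exactness of the tensor functor and the identification of the induced arrow are just the details the paper leaves implicit.
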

\begin{proof}
This follows immediately from the previous Proposition, Proposition \ref{cofiber first chern class} and from algebraic Bott periodicity $\Ql{,X}(\beta)(-1)[-2]\simeq \Ql{,X}(\beta)$.
\end{proof}
\begin{rmk}
Notice that $i_0^*j_{0*}$ and $i^*j_*$ are lax monoidal functors. In particular, $s_0^*i_0^*j_{0*}\Ql{,\mathcal{U}_X}(\beta)$ and $i^*j_*\Ql{,X_{\mathcal{U}}}(\beta)$ have commutative algebra structures. The map $sp_{(X,s_X)}^{\textup{mi}}\otimes_{\Ql{,X_0}}\Ql{,X_0}(\beta)$ is a map of commutative algebras as $s_X^*j_{0*}\Ql{,X_{\mathcal{U}_X}}(\beta)\rightarrow j_*s_{\mathcal{U}}^*\Ql{,X_{\mathcal{U}_X}}(\beta)$ is so. In particular, the $\ell$-adic sheaf $\Phi^{\textup{mi}}_{(X,s_X)}(\Ql{}(\beta))$ lives in $\md_{i^*s_X^*j_{0*}\Ql{,\mathcal{U}_X}}(\beta)(\shvl(X_0))$.
\end{rmk}

\subsection{The main theorem}
\begin{prop}\label{compatibility chern characters}
Let $X$ be a regular scheme and let $\Lc \in Pic(X)$. Then
\begin{equation}
    cofib(1-\Rl_X(m_{\Lc}):\Rl_X(\bu_X)\rightarrow \Rl_X(\bu_X))
\end{equation}
\begin{equation*}
\simeq cofib(c_1(\Lc)\otimes_{\Ql{,X}}\Ql{,X}(\beta):\Ql{,X}(\beta)\rightarrow \Ql{,X}(\beta)),
\end{equation*}
where $c_1(\Lc)$ is the first Chern class of $\Lc$.

Moreover, 
\begin{equation}
    cofib(c_1(\Lc)\otimes_{\Ql{,X}}\Ql{,X}(\beta))\simeq  cofib(c_1(\Lc^{\vee})\otimes_{\Ql{,X}}\Ql{,X}(\beta)).
\end{equation}
\end{prop}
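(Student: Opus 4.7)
The strategy is to reduce the proposition to an algebraic manipulation in the endomorphism algebra of $\Ql{,X}(\beta)$. By Riou's theorem (Theorem~\ref{thm riou}), there is an equivalence $\Rl_X(\bu_X) \simeq \Ql{,X}(\beta)$ of commutative algebras. Under this equivalence, I would identify $\Rl_X(m_{\Lc})$ with multiplication by the $\ell$-adic Chern character $ch(\Lc) = \exp(c_1(\Lc))$, where $c_1(\Lc) : \Ql{,X} \to \Ql{,X}(1)[2]$ is regarded as a degree-zero endomorphism of $\Ql{,X}(\beta)$ via Bott periodicity $\Ql{,X}(\beta)(1)[2] \simeq \Ql{,X}(\beta)$. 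This identification uses Remark~\ref{multiplication by perfect complex in HK}, which gives $\Mv_X(m_{\Lc})$ as multiplication by $[\Lc] \in \hk_0(X)$, together with the fact that the composition of rationalization and $\ell$-adic realization sends this K-theoretic class, under Riou's equivalence $\MB{X}(\beta) \simeq \bu_{X,\Q}$, to its $\ell$-adic Chern character $\exp(c_1(\Lc))$.

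Given this identification, the first equivalence follows from the algebraic identity
\begin{equation*}
1 - \exp(c_1(\Lc)) = -c_1(\Lc) \cdot u, \qquad u := \sum_{n \geq 0} \frac{c_1(\Lc)^n}{(n+1)!}.
\end{equation*}
Using the decomposition $\Ql{,X}(\beta) \simeq \bigoplus_{i \in \Z} \Ql{,X}(i)[2i]$ and the fact that $c_1(\Lc)$ strictly shifts this $\Z$-grading (sending $\Ql{,X}(i)[2i]$ into $\Ql{,X}(i+1)[2(i+1)]$), each matrix coefficient of $u$ receives only one contributing summand, so $u$ is a well-defined lower-triangular endomorphism with identity diagonal, and it admits an inverse by inductive solution of the triangular matrix equation $uv = id = vu$. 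Since $1 - \exp(c_1(\Lc)) = (-u) \circ c_1(\Lc)$ with $-u$ an equivalence, I obtain an equivalence of cofibers $cofib(c_1(\Lc) \otimes \Ql{,X}(\beta)) \simeq cofib(1 - \Rl_X(m_{\Lc}))$. The second equivalence is then immediate from the standard identity $c_1(\Lc^{\vee}) = -c_1(\Lc)$ in $H^2(X, \Ql(1))$, since multiplication by $-1$ is an equivalence.

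The main obstacle is the precise identification of $\Rl_X(m_{\Lc})$ with multiplication by $\exp(c_1(\Lc))$: while morally clear and closely analogous to the argument in \cite[Proposition 4.28]{brtv}, it requires carefully tracing the compatibility of the $\ell$-adic realization with the Chern character via Riou's theorem and the motivic cycle class map, so that the K-theoretic multiplication on $\bu_X$ and the $\ell$-adic exponential of $c_1$ become identified as endomorphisms of $\Ql{,X}(\beta)$.
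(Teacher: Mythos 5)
Your overall strategy coincides with the paper's: identify $\Rl_X(m_{\Lc})$ with multiplication by the Chern character $\exp(c_1(\Lc))$, factor $1-\exp(c_1(\Lc))=-c_1(\Lc)\cdot u$, and show $u$ is invertible. The gap is in your justification of the invertibility of $u$. The triangularity argument does not work as stated: an endomorphism of the direct sum $\Ql{,X}(\beta)\simeq\bigoplus_{i\in\Z}\Ql{,X}(i)[2i]$ is necessarily column-finite, because each summand $\Ql{,X}(i)[2i]$ is a compact (constructible) object of $\shvl(X)$, so a map out of it into the coproduct has only finitely many nonzero components. Hence if $c_1(\Lc)$ were not nilpotent, neither your $u=\sum_{n\geq 0}c_1(\Lc)^n/(n+1)!$ nor $\exp(c_1(\Lc))$ itself would define an endomorphism of $\Ql{,X}(\beta)$, and the "inverse" produced by solving the triangular system would likewise have infinitely many nonzero components out of each summand and thus not land in the direct sum. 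The step you are missing is precisely the substantive input of the paper's proof: $c_1(\Lc)$ is nilpotent for $X$ noetherian of finite Krull dimension (\cite[Proposition 12.2.9]{cd19}, \cite[Proposition 3.8]{de08}). Once nilpotence is in hand, $\exp(c_1(\Lc))$ is a finite sum, $u=1+N$ with $N$ nilpotent is invertible for elementary reasons, and the grading plays no role.

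For the final equivalence, you use the additive relation $c_1(\Lc^{\vee})=-c_1(\Lc)$, which is correct for the additive $\ell$-adic first Chern class you are working with throughout, so your conclusion is fine. Note, however, that the paper instead treats $c_1$ via the multiplicative formal group law of the $K$-theory orientation, obtaining $c_1(\Lc^{\vee})=-c_1(\Lc)\bigl(1+\beta^{-1}c_1(\Lc^{\vee})\bigr)$ with the second factor invertible --- again by nilpotence. The two normalizations of $c_1$ differ by a unit, so both give the same cofiber, but in either version the nilpotence of $c_1$ is the fact doing the work, and your proposal never establishes it.
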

\begin{proof}
We start by noticing that the composition
\begin{equation}
    \textup{HK}(X)\otimes_{\Z} \Q \simeq H^{2*,*}_{\mathscr{M}}(X,\Z)\otimes_{\Z}\Q \simeq H^{2*,*}_{\mathscr{M}}(X,\Q),
\end{equation}
where $H^{2*,*}_{\mathscr{M}}$ denotes motivic cohomology, coincides with the Chern character (see \cite[\S 11.3.6]{cd19}) which is compatible with the $\ell$-adic Chern character (see \cite[\S 3.6 - \S 3.7]{brtv} and \cite{cd19}). Now, according to \cite[Proposition 12.2.9]{cd19} and \cite[Proposition 3.8]{de08}, $c_1(\Lc)$ is nilpotent and therefore the Chern character of $\Lc$ can be written as $1+c_1(\Lc)+\frac{1}{2}c_1(\Lc)^2+\dots+\frac{1}{m!}c_1(\Lc)^{m}$ for some $m\geq 1$. Then 
\begin{equation*}
    1-\Rl_X(m_{\Lc})=-c_1(\Lc)(1+\frac{1}{2}c_1(\Lc)+\dots+\frac{1}{m!}c_1(\Lc)^{m-1}).
\end{equation*}
As $c_1(\Lc)$ is nilpotent, so is $\frac{1}{2}c_1(\Lc)+\dots+\frac{1}{m!}c_1(\Lc)^{m-1}$, whence $(1+\frac{1}{2}c_1(\Lc)+\dots+\frac{1}{m!}c_1(\Lc)^{m-1})$ is invertible. This shows that
\begin{equation*}
    cofiber(1-\Rl_X(m_{\Lc}))\simeq cofiber(-c_1(\Lc)\otimes_{\Ql{,X}}\Ql{,X}(\beta))
\end{equation*}
\begin{equation*}
\simeq cofiber(c_1(\Lc)\otimes_{\Ql{,X}}\Ql{,X}(\beta)).
\end{equation*}
The last equivalence follows from the fact that, by \cite[Proposition 12.2.9 and Remark 13.2.2]{cd19}, we have 
\begin{equation}
    0=c_1(\Lc \otimes \Lc^{\vee})\simeq F(c_1(\Lc),c_1(\Lc^{\vee}))=c_1(\Lc)+c_1(\Lc^{\vee})+\beta^{-1}\cdot c_1(\Lc)\cdot c_1(\Lc^{\vee}).
\end{equation}
In particular, 
\begin{equation}
    c_1(\Lc^{\vee})\simeq -c_1(\Lc)(1+\beta^{-1}\cdot c_1(\Lc^{\vee}))
\end{equation}
and, as we have remarked above, $-(1+\beta^{-1}\cdot c_1(\Lc^{\vee}))$ is invertible as $c_1(\Lc^{\vee})$ is nilpotent.
\end{proof}
We are finally ready to state our main theorem:
\begin{thm}\label{main theorem}
Let $(X,s_X)$ be a twisted LG model over $(S,\Lc_S)$. Assume that $X$ is regular and that $0,s_X:X\rightarrow \V(\Lc_X)$ are Tor-independent. The following equivalence holds in $\md_{i^*s_X^*j_{0*}\Ql{,\mathcal{U}_X}(\beta)}(\shvl(X_0))$:
\begin{equation}
    i^*\Rlv_{X}(\sing{X,s_X})\simeq \Phi^{\textup{mi}}_{(X,s_X)}(\Ql{}(\beta))[-1],
\end{equation}
where the $i^*s_X^*j_{0*}\Ql{,\mathcal{U}_X}(\beta)$-module structure on the l.h.s. is the one induced by the equivalence of commutative algebras
\begin{equation}
   i^*\Rlv_X(\sing{X,0})\simeq   i^*s_X^*j_{0*}\Ql{,\mathcal{U}_X}(\beta).
\end{equation}
\end{thm}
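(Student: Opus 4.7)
The plan is to apply the $\ell$-adic realization $\Rl_X$ to the motivic fiber sequence established in the proposition immediately preceding the theorem, and then to identify each piece with the corresponding monodromy-invariant vanishing-cycles datum after tensoring with $\Ql{,X}(\beta)$. That proposition gives, under the Tor-independence hypothesis, the equivalence
\begin{equation*}
i^*\Mv_X(\sing{X,s_X}) \simeq \textup{fib}(i^* sp^{\textup{mot}}_X),
\end{equation*}
where $sp^{\textup{mot}}_X \colon \textup{cofib}(\bu_X \xrightarrow{1-m_{\Lc^{\vee}_X}} \bu_X) \to j_*j^*\bu_X$ is the motivic specialization map. Since $\Rl_X$ is exact, commutes with $i^*$ and with $j_*j^*$ (by compatibility with the six-functor formalism), and since $\textup{fib}(f) \simeq \textup{cofib}(f)[-1]$ in any stable $\infty$-category, the theorem reduces to identifying $i^*\Rl_X(sp^{\textup{mot}}_X)$ with the $(\beta)$-coefficients version of $i^* sp^{\textup{mi}}_{(X,s_X)}$.

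I would handle source and target of the realized map separately. By Theorem \ref{thm riou}, $\Rl_X(\bu_X) \simeq \Ql{,X}(\beta)$, so
\begin{equation*}
\Rl_X\bigl(\textup{cofib}(\bu_X \xrightarrow{1-m_{\Lc^{\vee}_X}} \bu_X)\bigr) \simeq \textup{cofib}\bigl(\Ql{,X}(\beta) \xrightarrow{1-\Rl_X(m_{\Lc^{\vee}_X})} \Ql{,X}(\beta)\bigr).
\end{equation*}
By Proposition \ref{compatibility chern characters} this cofiber is equivalent to $\textup{cofib}(c_1(\Lc_X)\otimes \Ql{,X}(\beta))$, and combining Proposition \ref{cofiber first chern class} with its $(\beta)$-enhancement (Proposition \ref{compatibility Phimi with beta}) identifies the latter with $i_0^*j_{0*}\Ql{,\mathcal{U}_X}(\beta)$. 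Pulling back along $i$ and using the canonical identification $s_0 = i$ (on the truncation both projections from $\pi_0(X_0) = X \times_{\V(\Lc_X)} X$ coincide with the inclusion of the vanishing locus) yields $s_0^*i_0^*j_{0*}\Ql{,\mathcal{U}_X}(\beta) \simeq i^*s_X^*j_{0*}\Ql{,\mathcal{U}_X}(\beta)$, which is the source of $sp^{\textup{mi}}_{(X,s_X)}(\beta)$. For the target, $\Rl_X(j_*j^*\bu_X) \simeq j_*\Ql{,X_{\mathcal{U}}}(\beta)$, whence $i^*\Rl_X(j_*j^*\bu_X) \simeq i^*j_*\Ql{,X_{\mathcal{U}}}(\beta)$ agrees with the target of $sp^{\textup{mi}}_{(X,s_X)}(\beta)$ by Proposition \ref{compatibility Phimi with beta}.

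The main obstacle is verifying that the two maps themselves agree under these identifications. Both arise from the same underlying null-homotopy over the open complement $X_{\mathcal{U}}$: motivically, $j^*(1-m_{\Lc^{\vee}_X}) \sim 0$ because $j^*\Lc^{\vee}_X \simeq \Oc_{X_{\mathcal{U}}}$ is trivial; $\ell$-adically, $j^*c_1(\Lc_X) = c_1(j^*\Lc_X) = 0$ for the same reason. The translation from one null-homotopy to the other is precisely the content of Proposition \ref{compatibility chern characters}, via the factorisation $1-\Rl_X(m_{\Lc}) = -c_1(\Lc)\cdot u$ with $u$ invertible. Tracing definitions through the six-functor formalism, the induced map on cofibers coming from the motivic side is the realization of the base-change natural transformation $s_X^*j_{0*} \Rightarrow j_*s_{\mathcal{U}}^*$ applied to $\Ql{,\mathcal{U}_X}(\beta)$, which by Definition \ref{mododromy-invariant vanishing cycles def} is exactly $sp^{\textup{mi}}_{(X,s_X)}(\beta)$. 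The Tor-independence hypothesis ensures that the ambient pullback squares are non-derived fiber products, so the base-change identities apply without correction.

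Combining the above yields
\begin{equation*}
i^*\Rlv_X(\sing{X,s_X}) \simeq \textup{fib}\bigl(i^*sp^{\textup{mi}}_{(X,s_X)}(\beta)\bigr) \simeq \textup{cofib}\bigl(i^*sp^{\textup{mi}}_{(X,s_X)}(\beta)\bigr)[-1] \simeq \Phi^{\textup{mi}}_{(X,s_X)}(\Ql{}(\beta))[-1].
\end{equation*}
The stated $i^*s_X^*j_{0*}\Ql{,\mathcal{U}_X}(\beta)$-module structure is obtained by specialising the same chain of identifications to $s_X = 0$, which furnishes the commutative-algebra equivalence $i^*\Rlv_X(\sing{X,0}) \simeq i^*s_X^*j_{0*}\Ql{,\mathcal{U}_X}(\beta)$ and makes the whole identification $\Ql{,X}(\beta)$-linear.
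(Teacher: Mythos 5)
Your overall strategy is the same as the paper's: start from the motivic fiber sequence $i^*\Mv_X(\sing{X,s_X})\simeq \textup{fib}(i^*sp^{\textup{mot}}_X)$, apply $\Rl_X$, identify the middle term with $\textup{cofib}\bigl(c_1(\Lc_{X_0})\otimes_{\Ql{,X_0}}\Ql{,X_0}(\beta)\bigr)$ via Propositions \ref{compatibility chern characters}, \ref{cofiber first chern class} and \ref{compatibility Phimi with beta}, use $i=s_0$ to match sources, and conclude by rotating the triangle. The identifications of the source and target \emph{objects} are carried out correctly.

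The gap is at the step you yourself call the main obstacle. The assertion that ``the induced map on cofibers coming from the motivic side is the realization of the base-change natural transformation $s_X^*j_{0*}\Rightarrow j_*s_{\mathcal{U}}^*$'' is exactly what must be proven, and it does not follow from Proposition \ref{compatibility chern characters}: that proposition identifies the two cofibers as objects but says nothing about the maps out of them. The realized motivic specialization map is, by construction, induced by the unit $\Ql{,X}(\beta)\rightarrow j_*j^*\Ql{,X}(\beta)$ of $(j^*,j_*)$, whereas $sp^{\textup{mi}}_{(X,s_X)}$ is induced by the base-change transformation $s_X^*j_{0*}\rightarrow j_*s_{\mathcal{U}}^*$; a priori these are different maps out of the same cofiber, and ``tracing definitions through the six-functor formalism'' is precisely the content that is missing. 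The paper closes this by noting that both maps out of the cofiber are determined, via the universal property of the cofiber, by maps $\tilde f,\tilde g:\Ql{,X_0}(\beta)\rightarrow i^*j_*\Ql{,X_{\mathcal{U}}}(\beta)$ vanishing on the first term of the triangle, and then proving $\tilde f\simeq\tilde g$ by an explicit diagram: one unwinds $\tilde g$ as the unit of the adjunction attached to $j_0\circ s_{\mathcal{U}}\simeq s_X\circ j$ and uses that $s_X$ is a closed immersion, so the counit $s_X^*s_{X*}\rightarrow 1$ is an equivalence, to recover the unit of $(j^*,j_*)$, i.e.\ $\tilde f$. Without this argument (or an equivalent one) the two fiber sequences have not been identified and the final chain of equivalences is unjustified.
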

\begin{proof}
We start by noticing that $i=s_0:X_0\rightarrow X$. Indeed, $i_0 \circ s_0=s_X\circ i$ and both $i_0$ and $s_X$ are sections of the canonical morphism $\V_X=\V(\Lc_X)\rightarrow X$. In particular
\begin{equation}
     i^*\Rlv_X(\sing{X,0})\simeq  s_0^*\Rlv_X(\sing{X,0})\underbracket{\simeq}_{\textup{Proposition }\ref{motivic realization sing(S,0) 2}}
      i^*s_X^*j_{0*}\Ql{,\mathcal{U}_X}(\beta).
\end{equation}
Since $X$ is supposed to be regular, $\sing{X,s_X}\simeq \sing{X_0}$. By Propositions \ref{fundamental fiber cofiber sing} and \ref{compatibility chern characters} we have a fiber-cofiber sequence
\begin{equation}
\begindc{\commdiag}[18]
    \obj(0,15)[1]{$i^*\Rl_X(\sing{X_0})$}
    \obj(0,0)[2]{$cofiber(c_1(\Lc_{X_0})\otimes_{\Ql{,X_0}}\Ql{,X_0}(\beta):\Ql{,X_0}(\beta)\rightarrow \Ql{,X_0}(\beta))$}
    \obj(0,-15)[3]{$ i^*j_*\Ql{,X_{\mathcal{U}}}(\beta).$}
    \mor{1}{2}{$$}
    \mor{2}{3}{$f$}
\enddc
\end{equation}
in $\md_{i^*s_X^*j_{0*}\Ql{,\mathcal{U}_X}(\beta)}(\shvl(X_0))$.
On the other hand, by Proposition \ref{compatibility Phimi with beta}, we have another fiber-cofiber sequence %in $\md_{i^*s_X^*j_{0*}\Ql{,\mathcal{U}_X}(\beta)}(\shvl(X_0))$
\begin{equation}
\begindc{\commdiag}[18]
    \obj(0,15)[1]{$\Phi^{\textup{mi}}_{(X,s_X)}(\Ql{}(\beta))[-1]$}
    \obj(0,0)[2]{$cofiber(c_1(\Lc_{X_0})\otimes_{\Ql{,X_0}}\Ql{,X_0}(\beta):\Ql{,X_0}(\beta)\rightarrow \Ql{,X_0}(\beta))$}
    \obj(0,-15)[3]{$i^*j_*\Ql{,X_{\mathcal{U}}}(\beta).$}
    \mor{1}{2}{$$}
    \mor{2}{3}{$g$}
\enddc
\end{equation}
Both $f$ and $g$ in these two fiber-cofiber sequences are defined by the universal property of the object in the middle. Therefore, if we consider the diagram
\begin{equation}
    \begindc{\commdiag}[18]
    \obj(-80,20)[1]{$\Ql{,X_0}(\beta)$}
    \obj(-5,20)[2]{$\Ql{,X_0}(\beta)$}
    \obj(55,20)[3]{$cofiber(c_1(\Lc_{X_0}^{\vee})\otimes_{\Ql{,X_0}}\Ql{,X_0}(\beta))$}
    \obj(-5,-20)[4]{$i^*j_*\Ql{,X_{\mathcal{U}}}(\beta)$,}
    \mor{1}{2}{$c_1(\Lc_{X_0}^{\vee})\otimes_{\Ql{,X_0}}\Ql{,X_0}(\beta)$}
    \mor{2}{3}{$$}
    \mor{1}{4}{$\sim 0$}
    \mor(-3,17)(-3,-17){$\tilde{f}$}[\atright,\solidarrow]
    \mor(3,17)(3,-17){$\tilde{g}$}
    \mor(76,17)(4,-16){$f$}[\atright,\solidarrow]
    \mor(84,17)(12,-16){$g$}
    \enddc
\end{equation}

where the middle vertical arrows induce the morphism $f$ and $g$ respectively, it suffices to show $\tilde{f}\sim \tilde{g}$. By definition, $\tilde{f}$ is induced by the counit $\Ql{,X}(\beta)\rightarrow j_*j^*\Ql{,X}(\beta)$, while $\tilde{g}$ is induced by the base change morphism $b.c.:s_X^*j_{0*}\Ql{,\mathcal{U}_X}(\beta)\rightarrow j_*s_{\mathcal{U}}^*\Ql{,\mathcal{U}_X}(\beta)$:
\begin{equation}
    \tilde{g}:i^*s_X^*\Ql{,\V_X}(\beta)\rightarrow i^*s_X^*j_{0*}j_0^*\Ql{,\V_X}(\beta)\rightarrow i^*j_*s_{\mathcal{U}}^*j_{0}^*\Ql{,\V_X}(\beta).
\end{equation}
It also suffices to show that the two arrows are homotopic before applying $i^*$. Consider the diagram
\begin{equation}
    \begindc{\commdiag}[18]
    \obj(-60,10)[1]{$s_X^*\Ql{,\V_X}(\beta)$}
    \obj(0,10)[2]{$s_X^*j_{0*}j_0^*\Ql{,\V_X}(\beta)$}
    \obj(70,10)[3]{$j_*s_{\mathcal{U}}^*j_0^*\Ql{,\V_X}(\beta)\simeq j_*j^*s_X^*\Ql{,\V_X}(\beta)$}
    \obj(0,-10)[4]{$s_X^*s_{X*}j_*s_{\mathcal{U}}^*j_0^*\Ql{,\V_X}(\beta)$}
    \obj(0,-26)[5]{$s_X^*s_{X*}j_*j^*s_X^*\Ql{,\V_X}(\beta),$}
    \obj(0,-18)[]{$\simeq$}
    \mor{1}{2}{$$}
    \mor{2}{3}{$b.c.$}
    \mor{2}{4}{$$}
    \mor(25,-18)(80,7){$s_X^*s_{X*}\rightarrow 1$}[\atright,\solidarrow]
    \mor(-60,7)(-25,-18){$$}
    \enddc
\end{equation}

If we analyse the commutative triangle on the left, we see that the objlique arrow corresponds to the unit of the adjunction $((s_X\circ j)^*,(s_X\circ j)_*)$. Indeed, by definition, the vertical arrow is 
\begin{equation}
    s_X^*j_{0*}(j_0^*\Ql{,\V_X}(\beta)\xrightarrow{\text{unit }(s_{\mathcal{U}}^*,s_{\mathcal{U}*})} s_{\mathcal{U}*}s_{\mathcal{U}}^*j_0^*\Ql{,\V_X}(\beta))
\end{equation}
and the horizontal arrow is the one induced by the unit of $(j_0^*,j_{0*})$. The composition is exactly the unit of the adjunction $((j_0\circ s_{\mathcal{U}})^*,(j_0\circ s_{\mathcal{U}})_*)$. Hence, the claim is proved as $j_0\circ s_{\mathcal{U}}\simeq s_X\circ j$. Now notice that $s_{X*}$ is conservative ($s_X$ is a closed morphism), i.e. the counit $s_X^*s_{X*}\rightarrow 1$ is a natural equivalence. If we compose the oblique arrow with it, we obtain the unit of $(j^*,j_*)$ evaluated in $s_X^*\Ql{,\V_X}(\beta)$. The statement about the module structures is clear.
\end{proof}
\begin{rmk}
Notice that our main theorem provides a generalization of the formula proved in \cite{brtv}: assume that we are given a proper flat morphism $p:X\rightarrow S$ from a regular scheme to an excellent strictly henselian trait\footnote{We shall recycle the notation that we have introduced in $\S$ \ref{the formalism of vanishing cycles}}. Let $s_X:X\rightarrow \aff{1}{X}$ be the pullback of the section $S\rightarrow \aff{1}{S}$ given by the uniformizer $\pi$. Then we can consider the diagram
\begin{equation}
    \begindc{\commdiag}[18]
    \obj(-40,20)[1]{$\sigma$}
    \obj(0,20)[2]{$S$}
    \obj(40,20)[3]{$\eta$}
    \obj(-40,0)[4]{$X_0$}
    \obj(0,0)[5]{$X$}
    \obj(40,0)[6]{$X_{\eta}$}
    \obj(-40,-20)[7]{$X$}
    \obj(0,-20)[8]{$\aff{1}{X}$}
    \obj(40,-20)[9]{$\gm{X}$}
    \mor{1}{2}{$i_{\sigma}$}
    \mor{3}{2}{$j_{\eta}$}[\atright,\solidarrow]
    \mor{4}{1}{$p_{\sigma}$}
    \mor{5}{2}{$p$}
    \mor{6}{3}{$p_{\eta}$}[\atright,\solidarrow]
    \mor{4}{5}{$i$}
    \mor{6}{5}{$j$}[\atright,\solidarrow]
    \mor{4}{7}{$s_0$}[\atright,\solidarrow]
    \mor{5}{8}{$s_X$}
    \mor{6}{9}{$s_{\gm{X}}$}
    \mor{7}{8}{$i_0$}[\atright,\solidarrow]
    \mor{9}{8}{$j_0$}
    \enddc
\end{equation}
where all squares are cartesian. The theorem we have just proved tells us that
\begin{equation*}
    i^*\Rlv_X(\sing{X,s_X})\simeq \Phi_{(X,s_X)}^{\textup{mi}}(\Ql{}(\beta))[-1].
\end{equation*}
By the regularity assumption on $X$, $\sing{X,s_X}\simeq \sing{X_0}$. If we apply $p_{\sigma*}$ to the formula above we find:
\begin{equation}
    p_{\sigma*}i^*\Rlv_X(\sing{X_0})\simeq i_{\sigma}^*\Rlv_S(\sing{X_0})\simeq
\end{equation}
\begin{equation*}
    p_{\sigma*}\Phi_{(X,s_X)}^{\textup{mi}}(\Ql{}(\beta))[-1]\underbracket{\simeq}_{\text{Lemma }\ref{comparison monodromy invariant inertia invariant vanishing cycles}} \bigl( p_{\sigma*}\Phi_p(\Ql{,X}(\beta))[-1] \bigr)^{\textup{h}I},
\end{equation*}
which is exactly the content of \cite[Theorem 4.39]{brtv}. Also notice that in this case 
\begin{equation}
i^*s_X^*j_{0*}\Ql{,\gm{X}}(\beta)\simeq \Ql{,X_{0}}(\beta)\oplus \Ql{,X_{0}}(\beta)[1],
\end{equation}
as the first Chern class of the trivial line bundle is zero. This recovers the equivalence 
\begin{equation}
    (\Ql{,\sigma}(\beta))^{\textup{h}I}\simeq \Ql{,\sigma}(\beta)\oplus \Ql{,\sigma}(\beta)[1]
\end{equation}
proved in \textit{loc.cit.}
\end{rmk}
\section{The \texorpdfstring{$\ell$}{l}-adic realization of the dg category of singularities of a twisted LG model of rank \texorpdfstring{$r$}{r}}\label{the l-adic realization of the dg category of singularities of a twisted LG model of rank r}

In this section we will explain how, by means of a theorem due to D.~Orlov and J.~Burke - M.~Walker, the main theorem we proved in the previous section allows us to compute the $\ell$-adic realization of the dg category of singularities of the zero locus of a global section of any vector bundle on a regular scheme. This is the reason that led us to consider twisted LG model, as defined in $\S 3.1$, as the author was initially interested in computing the $\ell$-adic realization of the zero locus of a multifunction $X\rightarrow \aff{n}{S}$ (with $X$ regular).
\subsection{Reduction of codimension}
In this section we will provide an $\oo$-functorial lax monoidal enhancement of the "reduction of codimension" equivalence proved by D.~Orlov (\cite[Theorem 2.1]{orl06}) and by J.~Burke - M.~Walker (\cite[Theorem A.4]{bw15}).
\begin{context}
Let $S$ be a regular noetherian scheme of finite Krull dimension.
\end{context}
\begin{defn}
Notice that all we said in section \S \ref{the category of twisted LG models} can be generalised \ital{mutatis mutandis} to the situation where line bundles $\Lc_S$ are replaced by vector bundles of a fixed rank $r$ $\Ec_S$. In particular, for a fixed vector bundle $\Ec_S$ on $S$, we can define a symmetric monoidal (ordinary) category $\tlgmr{S}^{\boxplus}$, analogous to the one we defined in section \S \ref{the category of twisted LG models}.
\end{defn}
\begin{notation}
Let $\Ec_S$ be a vector bundle of rank $r$ over $S$. We will denote $\mathbb{P}(\Ec_S)=Proj_S(Sym_{\Oc_S}(\Ec_S^{\vee}))$ and  $\pi_S:\mathbb{P}(\Ec_S)\rightarrow S$ the associate projective bundle and projection. Moreover, we will denote by $\Oc(1)$ the twisting sheaf on $\mathbb{P}(\Ec_S)$.
\end{notation}

\begin{construction}
We will construct an (ordinary) symmetric monoidal functor
\begin{equation}\label{from twisted lg models of rank r to twisted lg models}
\Xi^{\boxplus}:\tlgmr{S}^{\boxplus}\rightarrow \tlgmproj{\Ec_S^{\vee}}^{\boxplus}
\end{equation}
following the lead of \cite{bw15} and \cite{orl06}.
Let $(X,s)\in \tlgmr{S}$. Consider $W_s \in \Gamma(\mathbb{P}(\Ec_X^{\vee}),\Oc(1))$ defined as the morphism
\begin{equation}
    W_s:\Oc_{\mathbb{P}(\Ec_X^{\vee})}\simeq \widetilde{Sym_{\Oc_X}(\Ec_X)}\rightarrow \widetilde{Sym_{\Oc_X}(\Ec_X)(1)}\simeq \Oc(1)
\end{equation}
induced by the morphism of modules
\begin{equation}
    Sym_{\Oc_X}(\Ec_X)\rightarrow Sym_{\Oc_X}(\Ec_X)(1)
\end{equation}
induced by $s:\Oc_X\rightarrow \Ec_X^{\vee}$. Here $\widetilde{(-)}$ is the functor that associates a quasi-coherent module on $\mathbb{P}(\Ec_X^{\vee})$ to a graded $Sym_{\Oc_X}(\Ec_X)$-module.
The assignment 
\begin{equation*}
    (X,s)\mapsto (\mathbb{P}(\Ec_X^{\vee}),W_{s}),
\end{equation*}
together with the obvious law for morphisms defines a functor
\begin{equation}
\Xi:\tlgmr{S}\rightarrow \tlgmproj{\Ec_S^{\vee}}.
\end{equation}
It is immediate to observe that $(S,\underline{0})\mapsto (\mathbb{P}(\Ec_S^{\vee}),0)$, i.e. the functor is compatible with the units of the two symmetric monoidal structures. It remains to show that
\begin{equation*}
    \Xi((X,s)\boxplus (Y,t))\simeq \Xi((X,s))\boxplus \Xi((Y,t)).
\end{equation*}
On the left hand side we have
\begin{equation*}
    (\mathbb{P}(\Ec_{X\times_S Y}^{\vee}),W_{s\boxplus t})),
\end{equation*}
while on the right hand side we have
\begin{equation*}
    (\mathbb{P}(\Ec_X^{\vee})\times_{\mathbb{P}(\Ec_S^{\vee})}\mathbb{P}(\Ec_Y^{\vee}),W_{s}\boxplus W_{t}).
\end{equation*}

Since $\mathbb{P}(\Ec_X^{\vee})\times_{\mathbb{P}(\Ec_S^{\vee})}\mathbb{P}(\Ec_Y^{\vee})\simeq (X\times_S \mathbb{P}(\Ec_S^{\vee}))\times_{\mathbb{P}(\Ec_S^{\vee})}(Y\times_S \mathbb{P}(\Ec_S^{\vee}))\simeq (X\times_S Y)\times_S \mathbb{P}(\Ec_S^{\vee})\simeq \mathbb{P}(\Ec_{X\times_S Y}^{\vee})$, it suffices to show that $W_{s\boxplus t}=W_{s}\boxplus W_{t}$. It is enough to do it Zariski-locally ($\Oc(1)$ is a sheaf). Locally, $\mathbb{P}(\Ec_{X\times_S Y}^{\vee})$ is isomorphic to a projective space. We may consider the covering of $\mathbb{P}(\Ec_{X\times_S Y}^{\vee})$ consisting of open affine subschemes $Spec\bigl (B\otimes_A C[\frac{T_1}{T_j},\dots,\frac{T_n}{T_j}] \bigr )$, where $Spec(A)$ (resp. $Spec(B)$, resp. $Spec(C)$) is an open affine subscheme of $S$ (resp. $X$, resp. $Y$). The restriction to this open subset of $W_{s\boxplus t}$ is of the form
\begin{equation*}
    (f_1\otimes1+1\otimes g_1)\cdot \frac{T_1}{T_j}+\dots +(f_n\otimes 1+1\otimes g_n)\cdot \frac{T_n}{T_j},
\end{equation*}
while that of $W_{s}\boxplus W_{t}$ is
\begin{equation*}
    \bigl(f_1\cdot \frac{T_1}{T_j}\otimes 1 + 1\otimes g_1\cdot \frac{T_1}{T_j}\bigr)+\dots+\bigl(f_n\cdot \frac{T_n}{T_j}\otimes1+1\otimes g_n\cdot \frac{T_n}{T_j}\bigr).
\end{equation*}
The claim follows and thus we obtain the desired symmetric monoidal functor.
\end{construction}

If we compose the symmetric monoidal functor (\ref{from twisted lg models of rank r to twisted lg models}) with the lax monoidal $\oo$-functor defined in Section \ref{The dg category of singularities of a twisted LG model} we get
\begin{equation}\label{sing projectivization}
    \sing{\mathbb{P}(\Ec_{\bullet}^{\vee}),W_{\bullet}}^{\otimes} : \tlgmr{S}^{\textup{op},\boxplus}
    %\rightarrow \tlgmproj{\Ec_S^{\vee}}^{\textup{op},\boxplus}
    \rightarrow \md_{\sing{\mathbb{P}(\Ec_S^{\vee}),0}}\bigl(\dgcatm\bigr)^{\otimes}
\end{equation}
which, at the level of objects, corresponds to the assignment 
\begin{equation*}
    (X,s)\mapsto \sing{\mathbb{P}(\Ec_X^{\vee}),W_s}.
\end{equation*}

We define the dg category of singularities of a twisted LG model $(X,s)$ of rank $r$ over $(S,\Ec_S)$ in the following way: consider the derived zero locus of $s$, defined as the homotopy pullback of $s$ along the zero section 
\begin{equation}
    \begindc{\commdiag}[12]
    \obj(-20,20)[1]{$X_0$}
    \obj(20,20)[2]{$X$}
    \obj(-20,-20)[3]{$S$}
    \obj(20,-20)[4]{$\V(\Ec_S).$}
    \mor{1}{2}{$i$}
    \mor{1}{3}{$$}
    \mor{2}{4}{$s$}
    \mor{3}{4}{$0$}
    \enddc
\end{equation}

As $0:S\rightarrow \V(\Ec_S)$ is a closed lci morphism (locally, is of the form $U\rightarrow \aff{r}{S}$), so is $i:X_0\rightarrow X$. Thus the pushforward induces a dg functor
\begin{equation}
    i_*:\sing{X_0}\rightarrow \sing{X}.
\end{equation}
\begin{defn}
We define
\begin{equation}
    \sing{X,s}:=Ker\bigl(i_*:\sing{X_0}\rightarrow \sing{X} \bigr).
\end{equation}
\end{defn}
Similarly to what we did in Section \ref{The dg category of singularities of a twisted LG model}, we can define a lax monoidal $\oo$-functor 
\begin{equation}\label{lax monoidal sing twisted lg models rank r}
    \sing{\bullet,\bullet}^{\otimes}:\tlgmr{S}^{\textup{op},\boxplus}\rightarrow \md_{\sing{S,0}}\bigl( \dgcatm \bigr)^{\otimes}
\end{equation}
At the level of objects, it is defined by $(X,s)\mapsto \sing{X,s}$.
\begin{rmk}
Let $Spec(B)$ be an affine scheme and let $E_B$ be a projective B-module of constant rank $r$ and $s\in E_B$. The derived zero locus of $s$ is the spectrum of $B\otimes^{\mathbb{L}}_{Sym_B(E_B^{\vee})}B$, where the two augmentations $Sym_B(E_B^{\vee})\rightarrow B$ are determined by $s$ and $0$. The dg algebra associated to the simplicial ring $B\otimes^{\mathbb{L}}_{Sym_B(E_B^{\vee})}B$ via the Dold-Kan correspondence is the Koszul dg algebra associated to $(B,E_B^{\vee},s)$ 
\begin{equation}
K(B,E_B^{\vee},s):= 0\rightarrow \bigwedge^{r}E_B^{\vee}\rightarrow \bigwedge^{r-1}E_B^{\vee}\rightarrow \dots \bigwedge^{2}E_B^{\vee}\rightarrow E_B^{\vee}\xrightarrow{s} B\rightarrow 0 
\end{equation}
concentrated in degrees $[-r,0]$. This is locally true (see for example \cite[Remark 1.22]{pi19} or \cite{kr19}) and the global statement follows from the existence of a morphism of dg algebras
\begin{equation}
    K(B,E_B^{\vee},s)\rightarrow N(B\otimes^{\mathbb{L}}_{Sym_B(E_B^{\vee})}B),
\end{equation}
where $N$ is the normalized Moore complex functor.
\end{rmk}
\begin{construction}
Similarly to what we said in Construction \ref{cohs section line bundle}, there is an equivalence between $\qcoh{B\otimes^{\mathbb{L}}_{Sym_B(E_B^{\vee})}B}$ and the category of cofibrant $K(B,E_B^{\vee},s)$-dg modules, denoted by $\widehat{K(B,E_B^{\vee},s)}$. Under this equivalence, $\cohb{B\otimes^{\mathbb{L}}_{Sym_B(E_B^{\vee})}B}$ corresponds to the full subcategory of $\widehat{K(B,E_B^{\vee},s)}$ spanned by $K(B,E_B^{\vee},s)$-dg modules that are cohomologically bounded and whose cohomology is coherent over $coker(E_B^{\vee}\xrightarrow{s}B)$. 

Similarly, $\perf{B\otimes^{\mathbb{L}}_{Sym_B(E_B^{\vee})}B}$ corresponds to the full subcategory of $\widehat{K(B,E_B^{\vee},s)}$ spanned by homotopically finitely presented dg modules. We consider the full subcategory $\cohs{B,E_B^{\vee},s}$ spanned by $K(B,E_B^{\vee},s)$-dg modules that are cohomologically bounded, whose cohomology is coherent over $coker(E_B^{\vee}\xrightarrow{s}B)$ and whose underlying complex of $B$-modules is strictly perfect.
\end{construction}
\begin{lmm}\textup{\cite[Lemma 2.33]{brtv}}\\
Let $\cohs{B,E_B^{\vee},s}^{\textup{acy}}$ be the full sub-category of $\cohs{B,E_B^{\vee},s}$ spanned by acyclic dg modules. Then the cofibrant replacement induces an equivalence of dg categories
\begin{equation}
    \cohs{B,E_B^{\vee},s}[\textup{q.iso}^{-1}]\simeq \cohs{B,E_B^{\vee},s}/\cohs{B,E_B^{\vee},s}^{\textup{acy}}
    \end{equation}
\begin{equation*}
\simeq \cohbp{X_0}{X}.
\end{equation*}
If we label $\textup{Perf}^s(B,E_B^{\vee},s)$ the full subcategory of $\cohs{B,E_B^{\vee},s}$ spanned by perfect $K(B,E_B^{\vee},s)$-dg modules, there are equivalances of dg categories
\begin{equation}
   \cohs{B,E_B^{\vee},s}/\textup{Perf}^s(B,E_B^{\vee},s)\simeq \cohbp{X_0}{X}/\perf{X_0}\simeq \sing{X,s}.
   \end{equation}
\end{lmm}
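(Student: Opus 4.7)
The plan is to follow mutatis mutandis the proof of \cite[Lemma 2.33]{brtv}, with the necessary adaptations to the higher-rank setting: the Koszul algebra $K(B,E_B^{\vee},s)$ is now concentrated in cohomological degrees $[-r,0]$ rather than $[-1,0]$, and a module structure over it encodes $r$ higher differentials satisfying layered Koszul relations. The first equivalence $\cohs{B,E_B^{\vee},s}[\textup{q.iso}^{-1}] \simeq \cohs{B,E_B^{\vee},s}/\cohs{B,E_B^{\vee},s}^{\textup{acy}}$ is formal and rank-independent: it follows from Drinfeld's realization of dg Verdier quotients (\cite{dri}) combined with the fact that the acyclic objects form a thick subcategory of the pretriangulated dg category $\cohs{B,E_B^{\vee},s}$.

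For the second equivalence I would construct a dg functor
\[
\Phi: \cohs{B,E_B^{\vee},s} \longrightarrow \cohbp{X_0}{X}
\]
obtained by composing the inclusion $\cohs{B,E_B^{\vee},s} \hookrightarrow \widehat{K(B,E_B^{\vee},s)}$ with the equivalence $\widehat{K(B,E_B^{\vee},s)} \simeq \qcoh{X_0}$ recalled just before the lemma statement. Since the forgetful functor $\widehat{K(B,E_B^{\vee},s)} \to \widehat{B}$ corresponds to $i_*$, the strict $B$-perfectness of any $(E,d,h_1,\ldots,h_r) \in \cohs{B,E_B^{\vee},s}$ forces $\Phi$ to land in $\cohbp{X_0}{X}$; cohomological boundedness and coherence over $coker(E_B^{\vee} \to B)$ hold by construction. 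Acyclic objects evidently map to zero, so $\Phi$ descends to a dg functor $\bar{\Phi}$ on the quotient.

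The main obstacle I foresee is essential surjectivity of $\bar{\Phi}$. Given $M \in \cohbp{X_0}{X}$, the strategy is to pick a strictly perfect $B$-complex $(E,d)$ quasi-isomorphic to $i_*M$ and then inductively lift the higher differentials $h_1,\ldots,h_r$ along this quasi-isomorphism so as to endow $(E,d)$ with a genuine $K(B,E_B^{\vee},s)$-module structure. In the rank-one case of \cite{brtv} there is only a single such datum and the obstruction is a single homotopy class; in our setting each successive $h_k$ must be compatible with all the preceding $h_j$, so the relevant obstructions live in higher $\textup{Ext}$-type groups. They will vanish because $(E,d)$ is termwise $B$-projective and the Koszul algebra is semi-free over $B$, so the standard transfer-of-structure machinery applies inductively on $k$. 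Full faithfulness of $\bar{\Phi}$ then follows from the general formula for mapping complexes in the dg quotient computed against objects admitting strictly perfect resolutions.

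For the second assertion I would observe that, under $\bar{\Phi}$, the full subcategory $\textup{Perf}^s(B,E_B^{\vee},s) \subset \cohs{B,E_B^{\vee},s}$ corresponds precisely to $\perf{X_0} \subset \cohbp{X_0}{X}$, since homotopically finitely presented $K(B,E_B^{\vee},s)$-modules are exactly the perfect complexes on $X_0$. Passing to dg quotients in $\dgcatm$ on both sides and invoking compatibility of $\bar{\Phi}$ with this operation then yields the desired identification with $\sing{X,s}$.
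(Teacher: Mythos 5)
Your proposal is correct and follows the same route as the paper, which simply states that the proof of \cite[Lemma 2.33]{brtv} applies \emph{mutatis mutandis}; your write-up is essentially an unpacking of what that adaptation involves (the Drinfeld-quotient formality of the first equivalence, the strictification/transfer-of-structure argument for essential surjectivity using termwise $B$-projectivity and the semi-freeness of the Koszul algebra over $B$, and the identification of $\textup{Perf}^s(B,E_B^{\vee},s)$ with the homotopically finitely presented, i.e.\ perfect, $K(B,E_B^{\vee},s)$-modules). The details you flag as potential obstacles are exactly the ones handled in the rank-one case in \emph{loc.\ cit.}, and they carry over as you describe.
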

\begin{proof}
The proof of \cite[Lemma 2.33]{brtv} applies, \ital{mutatis mutandis}.
\end{proof}
In the same way as in Construction \ref{functorial properties cohs line bundle} and in the following discussion, we can construct a lax monoidal $\oo$-functor
\begin{equation}
    \cohbp{\bullet}{\bullet}^{\otimes}:\tlgmr{S}^{op,\boxplus}\rightarrow \dgcatmo
\end{equation}
Moreover, using the same arguments as in the discussion following Remark 1.27 in \cite{pi19}, we get the lax monoidal $\oo$-functor (\ref{lax monoidal sing twisted lg models rank r}).

At this point, the reader might complain that there are too many $\textup{\textbf{Sing}}$'s involved, and this may cause confusion. As a partial justification to the choice of notation we made, let us show that these different $\oo$-functors are closely related.
\begin{defn}
Let $(X,s)\in \tlgmr{S}$. We define the projective bundle over $X_0$ associated to $\Ec_{X_0}=i^*\Ec_X$ as the derived pullback
\begin{equation}
\begindc{\commdiag}[18]
    \obj(-20,10)[1]{$\mathbb{P}(\Ec_{X_0}^{\vee})$}
    \obj(20,10)[2]{$\mathbb{P}(\Ec_X^{\vee})$}
    \obj(-20,-10)[3]{$X_0$}
    \obj(20,-10)[4]{$X.$}
    \mor{1}{2}{$$}
    \mor{1}{3}{$p_0$}[\atright,\solidarrow]
    \mor{2}{4}{$p$}
    \mor{3}{4}{$i$}
    \enddc
\end{equation}
\end{defn}

Recall that, given a global section of a line bundle on a scheme, we define its (derived) zero locus as in Definition \ref{derived zero locus global section line bundle}. In particular, for $(X,s)\in \tlgmr{S}$ we have a global section $W_s$ of the line bundle $\Oc(1)$ on $\mathbb{P}(\Ec_X^{\vee})$ and thus we have a (derived) pullback square
\begin{equation}
    \begindc{\commdiag}[18]
    \obj(-20,10)[1]{$V(W_{s})$}
    \obj(20,10)[2]{$\mathbb{P}(\Ec_X^{\vee})$}
    \obj(-20,-10)[3]{$\mathbb{P}(\Ec_X^{\vee})$}
    \obj(20,-10)[4]{$\V(\Oc(1)).$}
    \mor{1}{2}{$k$}
    \mor{1}{3}{$k$}
    \mor{2}{4}{$W_s$}
    \mor{3}{4}{$0$}
    \enddc
\end{equation}

Let $(X,s)\in \tlgmr{S}$ and consider the following diagram, where both squares are homotopy cartesian:
\begin{equation}\label{diagram for defining upsilon}
    \begindc{\commdiag}[18]
    \obj(0,20)[1]{$\mathbb{P}(\Ec_X^{\vee})$}
    \obj(30,20)[2]{$\V(\Oc(1))$}
    \obj(-30,0)[3]{$\mathbb{P}(\Ec_{X_0}^{\vee})$}
    \obj(0,0)[4]{$V(W_{s})$}
    \obj(30,0)[5]{$\mathbb{P}(\Ec_X^{\vee})$}
    \obj(-30,-20)[6]{$X_0$}
    \obj(30,-20)[7]{$X.$}
    \mor{1}{2}{$W_{s}$}
    \mor{4}{1}{$k$}
    \mor{5}{2}{$0$}[\atright,\solidarrow]
    \mor{4}{5}{$k$}
    \mor{3}{4}{$j$}
    \mor{3}{6}{$p_0$}
    \mor{5}{7}{$p$}
    \mor{6}{7}{$i$}
    \enddc
\end{equation}
By \cite[Chapter 4, Lemma 3.1.3, Lemma 5.1.4]{gr17}, $j_*p_0^*$ preserves complexes with coherent bounded cohomology. The derived proper base change equivalence $k_*j_*p_0^*\simeq p^*i_*$ implies that we have a dg functor
\begin{equation}
    \Upsilon_{(X,s)}=j_*p_0^*:\cohbp{X_0}{X}\rightarrow \cohbp{V(W_s)}{\mathbb{P}(\Ec_X^{\vee})}.
\end{equation}
and it is functorial in $(X,s)$. We will need to enhance the assignment $(X,s)\mapsto \Upsilon_{(X,s)}$ with a lax monoidal structure. We will use strict models.

Assume that $S=Spec(A)$ is affine and let $E$ be a projective $A$ module of rank $r$. Since $E$ is an $A$-module of finite type, there exists a surjection
\begin{equation}
  A^{n}\rightarrow E.
\end{equation}
Denote by $t_i$ ($i=1,\dots, n$) the images in $E$ of the elements $(1,0,\dots,0)$, \dots, $(0,\dots,0,1)\in A^n$.

We will define a lax monoidal $\oo$-natural transformation
\begin{equation}
  \Upsilon^{\otimes}: \cohbp{V(\bullet)}{\bullet}^{\otimes}\rightarrow \cohbp{V(W_{\bullet})}{\mathbb{P}(E_{\bullet}^{\vee})}^{\otimes},
\end{equation}
where
\begin{equation}
\cohbp{V(\bullet)}{\bullet}^{\otimes}, \cohbp{V(W_{\bullet})}{\mathbb{P}(E_{\bullet}^{\vee})}^{\otimes}: \textup{LG}_{(S,E)}^{\boxplus,\textup{aff,op}}\rightarrow \dgcatmo.
\end{equation}

For every $(Spec(B),s)\in \textup{LG}_{(S,E)}^{\boxplus,\textup{aff,op}}$,
\begin{equation*}
    \mathbb{P}(E_B^{\vee})=Proj(Sym_B(E_B))
\end{equation*} 
has a Zariski affine covering $\{ D_+(t_i) \}_{i=1}^n$, where
\begin{equation}
   D_+(t_i)=Spec(Sym_B(E_B^{\vee})_{(t_i)}).
\end{equation}
Here $Sym_B(E_B^{\vee})_{(t_i)}$ denotes the ring of degree $0$ elements of the graded ring $Sym_B(E_B^{\vee})[t_i^{-1}]$. Since
\begin{equation}
\cohbp{V(W_{s})}{\mathbb{P}(E_{B}^{\vee})}^{\otimes}\simeq \varprojlim \cohbp{V(W_{s|\mathcal{U}^{\bullet}})}{\mathcal{U}^{\bullet}}^{\otimes},
\end{equation}
where $\mathcal{U}^{\bullet}$ is the Cech hypercover of $\{ D_+(t_i) \}_{i=1}^n$, it will suffice to define an homotopy coherent diagram of lax monoidal $\oo$-natural transformations
\begin{equation}\label{homotopy coherent diagram lax monoidal oo natural transformations upsilon 1}
\cohbp{V(\bullet)}{\bullet}^{\otimes}\rightarrow \cohbp{V(W_{\bullet|\mathcal{U}^{\bullet}}}{\mathcal{U}^{\bullet}}^{\otimes}.
\end{equation}

Recall that $\cohs{B,E_{B}^{\vee},s_B}$ is the $A$-dg category of $K(B,E_B^{\vee},s_B)$-dg modules whose underlying $B$-dg module is strictly perfect. In other words, it is the dg category of $B$-dg modules $(M,d)$ endowed with a $B$-linear map
\begin{equation}
  h: M\rightarrow M\otimes_B E_B[-1]
\end{equation}
such that
\begin{equation}
  h^2=0, \hspace{0.5cm} [d,h]=id_M\otimes s_B:M\rightarrow M\otimes_B E_B.
\end{equation}
\begin{construction}
For every $(B,s_B) \in  \textup{LG}_{(S,E)}^{\boxplus,\textup{aff,op}}$ and every $i=1,\dots,n$, define a pseudo-functor
\begin{equation}
  \Upsilon_i:\cohs{B,E_B^{\vee},s_B}\rightarrow \cohs{Sym_B(E_B)_{(t_i)},t_i^{-1}\cdot Sym_B(E_B)_{(t_i)},W_{s_{B|D_+(t_i)}}}
\end{equation}
as follows. For $(M,d,h)\in \cohs{B,E_B^{\vee},s_B}$, let
\begin{equation}
  \Upsilon_i(M,d,h)=(M\otimes_B Sym_B(E_B),d\otimes id, \chi_{i,h}).
\end{equation}
The map $\chi_{i,h}:M\otimes_B Sym_B(E_B)\rightarrow M\otimes_B t_i\cdot Sym_B(E_B)$ is defined as the restriction to $D_+(t_i)$ of the composition
\begin{equation}
\begindc{\commdiag}[18]
  \obj(-40,10)[1]{$M\otimes_B Sym_B(E_B)$}
  \obj(40,10)[2]{$M\otimes_B E_B \otimes_B Sym_B(E_B)[-1]$}
  \mor{1}{2}{$h\otimes id$}
  \obj(0,-10)[3]{$M\otimes_B Sym_B(E_B)(1)[-1],$}
  \cmor((40,8)(40,2)(38,0)(19,0)(2,0)(0,-2)(0,-8)) \pdown(0,0){$$}
\enddc
\end{equation}

where $Sym_B(E_B)(1)$ is the $Sym_B(E_B)$ module $Sym_B(E_B)$ with the grading shifted by $1$ and the second morphism is induced by multiplication. Notice that $ Sym_B(E_B)(1)\otimes_{Sym_B(E_B)}\otimes Sym_B(E_B)_{(t_i)}=t_i\cdot Sym_B(E_B)_{(t_i)}$. 
The desired properties that 
\begin{equation}
  \chi_{i,h}^2=0,\hspace{0.5cm} [d\otimes id, \chi_{i,h}]=W_{s_{B|D_+(t_i)}}\otimes id
\end{equation}
are local (on $Spec(B)$) and if $E_B=B^r$, $s_B=(s_{B,1},\dots,s_{B,n})$, then $\Upsilon_i$ admits the explicit description
\begin{equation}
  \Upsilon_i(M,d,h=\{h_j\}_{j=1,\dots,r})=(M\otimes_B B[x_1,\dots,x_r],d\otimes id, x_1\cdot h_1+\dots+x_r\cdot h_r)
\end{equation}
and $W_{s_{B|D_+(t_i)}}=x_1\cdot s_{B,1}+\dots +x_r\cdot s_{B,r}$.

It follows immediately from the definitions of the pseudo functorial structures of $(B,s_B)\mapsto \cohs{B,E_B^{\vee},s_B}$ and $(B,s_B)\mapsto \cohs{Sym_B(E_B)_{(t_i)},t_i^{-1}\cdot Sym_B(E_B)_{(t_i)},W_{s_{B|D_+(t_i)}}}$ that $  \Upsilon_i$ is a pseudo natural transformation. 
\end{construction}

\begin{rmk}
 Let $X=Spec(B)$ and $X_0=Spec(K(B,E_B^{\vee},s_B))$. Notice that $\Upsilon_i$ models the dg functor
 \begin{equation}
    j_*p_0^*:\cohbp{X_0}{X}\rightarrow \cohbp{V(W_{s_B})\cap D_+(t_i)}{D_+(t_i)},
 \end{equation}
 where 
 \begin{equation*}
     p_0: D_+(t_i)\times_{\mathbb{P}(E_X^{\vee})}\mathbb{P}(E_{X_0}^{\vee})\rightarrow X_0
 \end{equation*}
and
\begin{equation*}
   j: D_+(t_i)\times_{\mathbb{P}(E_X^{\vee})}\mathbb{P}(E_{X_0}^{\vee})\rightarrow V(W_{s_B})\cap D_+(t_i).
\end{equation*}
\end{rmk}

\begin{construction}
 We will now endow $\Upsilon_i$ with a pseudo lax monoidal structure. If $(B,s_B)$ and $(C,s_C)$ are affine twisted LG models of rank $r$ over $(A,E)$, then
 \begin{equation}
   \mu: \cohs{B,E_B^{\vee},s_B}\otimes \cohs{C,E_C^{\vee},sC}\rightarrow \cohs{B\otimes_AC,E_{B\otimes_AC}^{\vee},s_{B}\boxplus s_C}
 \end{equation}
 is defined on objects by
 \begin{equation}
   ((M,d_M,h_M), (N,d_N,h_N))\mapsto (M\otimes_AN, d_{M\otimes_AN}, h_M\boxplus h_N),
 \end{equation}
where $h_M\boxplus h_N$ is the map
\begin{equation}
\begindc{\commdiag}[18]
  \obj(-40,10)[1]{$M\otimes_AN$}
  \obj(40,10)[2]{$(M\otimes_AN) \otimes_{B\otimes_AC} E_{B\otimes_AC}^{\oplus 2}[-1]$}
  \obj(0,-10)[3]{$(M\otimes_AN) \otimes_{B\otimes_AC} E_{B\otimes_AC}[-1].$}
  \mor{1}{2}{$\begin{bmatrix} h_M\otimes C \\ B\otimes h_N \end{bmatrix}$}
  \cmor((40,8)(40,2)(38,0)(19,0)(2,0)(0,-2)(0,-8)) \pdown(50,0){$\begin{bmatrix} 1 & 1 \end{bmatrix}$}
\enddc
\end{equation}
It is defined on morphisms in the obvious way.

The dg functor
\begin{equation*}
  \cohs{Sym_B(E_B)_{(t_i)},t_i^{-1}\cdot Sym_B(E_B)_{(t_i)},W_{s_{B|D_+(t_i)}}}
\end{equation*} 
\begin{equation*}
    \otimes
 \end{equation*}
 \begin{equation*}
    \cohs{Sym_C(E_C)_{(t_i)},t_i^{-1}\cdot Sym_C(E_C)_{(t_i)},W_{s_{C|D_+(t_i)}}}
 \end{equation*}
 \begin{equation}
 \mu \downarrow
 \end{equation}
\begin{equation*}
 \cohs{Sym_{B\otimes_AC}(E_{B\otimes_AC})_{(t_i)},t_i^{-1}\cdot Sym_{B\otimes_AC}(E_{B\otimes_AC})_{(t_i)},W_{s_B \boxplus s_{C|D_+(t_i)}}}
\end{equation*}
is defined similarly.

We need to verify that the two compositions
\begin{equation}
  \begindc{\commdiag}[18]
    \obj(0,20)[1]{$ \cohs{B,E_B^{\vee},s_B}\otimes \cohs{C,E_C^{\vee},sC}$}
    \obj(0,-20)[4]{$\cohs{Sym_{B\otimes_AC}(E_{B\otimes_AC})_{(t_i)},t_i^{-1}\cdot Sym_{B\otimes_AC}(E_{B\otimes_AC})_{(t_i)},W_{(s_B\boxplus s_C)_{|D_+(t_i)}}}$}
    \mor(-5,15)(-5,-15){$\mu \circ (\Upsilon_{(B,s_B)}\otimes \Upsilon_{(C,s_C)})$}[\atright,\solidarrow]
    \mor(5,15)(5,-15){$\Upsilon_{(B\otimes_AC,s_B\boxplus s_C)}\circ \mu$}
  \enddc
\end{equation}
are isomorphic up to natural isomorphism. The composition $\Upsilon_{(B\otimes_AC,s_B\boxplus s_C)}\circ \mu$ is 
\begin{equation}
  ((M,d_M,h_M),(N,d_N,h_N))\mapsto 
\end{equation}
\begin{equation*}
(M\otimes_A N\otimes_{B\otimes_AC} Sym_{B\otimes_AC}(E_{B\otimes_AC})_{(t_i)}, d \otimes id, \chi_{i,h_M\boxplus h_N}),
\end{equation*}
while the composition $\mu \circ (\Upsilon_{(B,s_B)}\otimes \Upsilon_{(C,s_C)})$ is
\begin{equation}
  ((M,d_M,h_M),(N,d_N,h_N))\mapsto 
\end{equation}
\begin{equation*}
((M\otimes_B Sym_{B}(E_{B})_{(t_i)})\otimes_{Sym_A(E)_{(t_i)}} (N\otimes_C Sym_{C}(E_{C})_{(t_i)}), d\otimes id, \chi_{i,h_M}\boxplus \chi_{i,h_N}).
\end{equation*}
It is clear that there is a canonical isomorphism of $B\otimes_AC$-dg modules
\begin{equation}
   (M\otimes_A N\otimes_{B\otimes_AC} Sym_{B\otimes_AC}(E_{B\otimes_AC})_{(t_i)}, d\otimes id)\simeq
\end{equation}
\begin{equation*}
 ((M\otimes_B Sym_{B}(E_{B})_{(t_i)})\otimes_{Sym_A(E)_{(t_i)}} (N\otimes_C Sym_{C}(E_{C})_{(t_i)}),d\otimes id).
\end{equation*}
Then we only need to show that
\begin{equation}
\chi_{i,h_M\boxplus h_N}=\chi_{i,h_M}\boxplus \chi_{i,h_N}
\end{equation}
under this natural isomorphism. This follows from the definitions.

The units of the lax monoidal pseudo functors $(B,s_B)\mapsto \cohs{B,E_B^{\vee},s_B}$ and $(B,s_B)\mapsto \cohs{Sym_B(E_B)_{(t_i)}, t_i^{-1}\cdot Sym_B(E_B)_{(t_i)}, W_{s_B}}$ are compatible: the diagram
\begin{equation}
  \begindc{\commdiag}[13]
    \obj(-40,0)[1]{$\underline{A}$}
    \obj(40,20)[2]{$\cohs{A,E^{\vee},0}$}
    \obj(40,-20)[3]{$\cohs{Sym_A(E)_{(t_i)},t_i^{-1}Sym_A(E)_{(t_i)},0}$}
    \mor(-38,2)(35,15){$$}
    \mor{2}{3}{$$}
    \mor(-38,-2)(35,-15){$$} 
  \enddc
\end{equation}

commutes. Here $\underline{A}$ is the dg category with one object $*$ and $End(*)=A$. The diagonal arrows are determined by
\begin{equation}
  *\mapsto A, \hspace{0.5cm} *\mapsto Sym_A(E)_{(t_i)}
\end{equation}
where $A$ (resp. $Sym_A(E)_{(t_i)}$) is concentrated in degree $0$ and the map $A\rightarrow  E[-1]$ (resp. $Sym_A(E)_{(t_i)}\rightarrow t_iSym_A(E)_{t_i}$) is zero.
\end{construction}
The previous constructions provide us with pseudo lax monoidal natural transformations
\begin{equation}
  \Upsilon_i^{\otimes} : \cohs{\bullet, E_{\bullet}^{\vee},s_{\bullet}}^{\otimes}\rightarrow \cohs{Sym_{\bullet}(E_{\bullet})_{(t_i)},t_i^{-1}Sym_{\bullet}(E_{\bullet}),W_{s_{\bullet}}}^{\otimes}.
\end{equation}
It is obvious that the $\Upsilon_i^{\otimes}$ preserve quasi-isomorphisms and that they are compatible with one another. In other words, they can be used to define the homotopy coherent diagram of lax monoidal $\oo$-natural transformations $(\ref{homotopy coherent diagram lax monoidal oo natural transformations upsilon 1})$.

Therefore, we get a lax monoidal $\oo$-natural transformation 
\begin{equation}
\Upsilon^{\otimes}: \cohbp{V(\bullet)}{\bullet}^{\otimes}\rightarrow \cohbp{V(W_{\bullet})}{\mathbb{P}(E_{\bullet}^{\vee})}^{\otimes}
\end{equation}
between the $\oo$-functors 
\begin{equation}
  \cohbp{V(\bullet)}{\bullet}^{\otimes}, \cohbp{V(W_{\bullet})}{\mathbb{P}(E_{\bullet}^{\vee})}^{\otimes} : \textup{LG}_{(A,E)}^{\textup{aff}, \boxtimes, \textup{op}} \rightarrow \dgcatmo.
\end{equation}

By Kan extension and descent, we extend $\Upsilon^{\otimes}$ to a lax monoidal $\oo$-natural transformation between 
\begin{equation}
  \cohbp{V(\bullet)}{\bullet}^{\otimes}, \cohbp{V(W_{\bullet})}{\mathbb{P}(E_{\bullet}^{\vee})}^{\otimes} : \textup{LG}_{(A,E)}^{\boxtimes, \textup{op}} \rightarrow \dgcatmo,
\end{equation}
i.e. we extend to all twisted LG models of rank $r$ over $(A,E)$.

Finally, if $S$ is not affine, we define the lax monoidal $\oo$-natural tranformation
\begin{equation}\label{jp0}
  \Upsilon^{\otimes}:\cohbp{V(\bullet)}{\bullet}\rightarrow \cohbp{V(W_{\bullet})}{\mathbb{P}(\Ec_{\bullet}^{\vee})}
\end{equation}
of lax monoidal $\oo$-functors $\textup{LG}_{(S,\Ec)}^{\boxplus,\textup{op}}\rightarrow \dgcatmo$ as
\begin{equation}  
 \varprojlim_{Spec(A)\rightarrow S} \bigl ( \Upsilon^{\otimes}_{(A,\Ec_A)}:\cohbp{V(\bullet)}{\bullet}^{\otimes}\rightarrow \cohbp{V(W_{\bullet})}{\mathbb{P}(E_{\bullet}^{\vee})}^{\otimes}\bigr ),
\end{equation}
where the limit is taken over Zariski open subschemes $Spec(A)\rightarrow S$.
Here $ \Upsilon^{\otimes}_{(A,\Ec_A)}$ is the lax monoidal $\oo$-natural transformation we defined above. Once again, we have used that
\begin{equation}
  \dgcatmo= \varprojlim_{Spec(A)\rightarrow S}\textup{\textbf{dgCat}}^{\textup{idm},\otimes}_A
\end{equation}
and that
\begin{equation}
  \textup{LG}_{(S,\Ec)}^{\boxplus}\simeq \varprojlim_{Spec(A)\rightarrow S} \textup{LG}_{(A,\Ec_A)}^{\boxplus}
\end{equation}
(this is an analogue of Lemma \ref{zariski descent tlgmm} for twisted LG models of rank $r$, which can be proved \textit{mutatis mutandis}).

\begin{rmk}
  Let $(X,s_X)$ be a twisted LG model of rank $r$ over $(S,\Ec)$. Then
  \begin{equation}
    \Upsilon_{(X,s_X)}=j_*p_0^*:\cohbp{X_0}{X}\rightarrow \cohbp{V(W_{s_X})}{\mathbb{P}(\Ec_X^{\vee})},
  \end{equation}
  where we have used the notation of diagram (\ref{diagram for defining upsilon})
\end{rmk}
\begin{lmm}
$j:\mathbb{P}(\Ec_{X_0}^{\vee})\rightarrow V(W_{s})$ is an lci morphism of derived schemes.
\end{lmm}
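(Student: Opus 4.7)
The plan is to realize $j$, Zariski-locally on $V(W_s)$, as the derived zero locus of a section of a vector bundle of rank $r-1$ on $V(W_s)$. Since the zero section of a vector bundle is an lci morphism, and since being lci is stable under derived pullback and is Zariski-local on the source, this suffices.

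The key input is the tautological short exact sequence on $\mathbb{P}(\Ec_X)$
\begin{equation*}
    0 \rightarrow \Kc \rightarrow p^*\Ec_X^{\vee} \rightarrow \Oc(1) \rightarrow 0,
\end{equation*}
in which $\Kc$ is locally free of rank $r-1$. By the very construction of $W_s$ in section \ref{the category of twisted LG models}, $W_s$ is the image of $p^*s \in \Gamma(\mathbb{P}(\Ec_X), p^*\Ec_X^{\vee})$ under the tautological quotient. On the other hand, the formation of projective bundles commutes with derived base change, so the composition $kj:\mathbb{P}(\Ec_{X_0}) \rightarrow \mathbb{P}(\Ec_X)$ coincides with the derived zero locus of $p^*s$ viewed as a section of $p^*\Ec_X^{\vee}$.

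Now I would work Zariski-locally on $\mathbb{P}(\Ec_X)$: on a chart $D_+(T_j)$ obtained after a local trivialization $\Ec_X \simeq \Oc_X^r$, the tautological sequence splits and $p^*s$ decomposes as a pair $(\widetilde{s}, W_s)$ with $\widetilde{s}\in \Gamma(\Kc)$. The derived zero locus of this pair factors as an iterated derived intersection: first form $V(W_s)$, then take the derived zero locus of the restriction $\widetilde{s}_{|V(W_s)}\in \Gamma(V(W_s), \Kc_{|V(W_s)})$. This exhibits $j_{|V(W_s)\cap D_+(T_j)}$ as the derived zero locus of a section of a rank $r-1$ vector bundle, and therefore as an lci morphism. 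The main technical step is justifying the two-step factorization, which in coordinates with $s = (s_1,\ldots,s_r)$ and $W_s = s_j + \sum_{i\neq j}s_i t_i$ on $D_+(T_j)$ reduces to the elementary Koszul identity $K(W_s, s_1, \ldots, \widehat{s_j}, \ldots, s_r) \simeq K(s_1, \ldots, s_r)$, arising from the invertible change of generators $W_s \leftrightarrow s_j$. Gluing these local lci descriptions via Zariski-locality of the lci condition then yields the lemma.
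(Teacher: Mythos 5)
Your argument is correct, but it takes a genuinely different route from the one in the paper. The paper works directly with the relative cotangent complex: it applies the transitivity fiber--cofiber sequence to $\mathbb{P}(\Ec_{X_0})\rightarrow V(W_s)\rightarrow \mathbb{P}(\Ec_X)$, identifies $\Lb_{\mathbb{P}(\Ec_{X_0})/\mathbb{P}(\Ec_X)}\simeq \Oc^{r}[1]$ and $j^*\Lb_{V(W_s)/\mathbb{P}(\Ec_X)}\simeq \Oc(-1)[1]$ via the lci descriptions of the two ambient zero loci, recognizes the connecting map as $(T_1,\dots,T_r)$, and concludes that $\Lb_{\mathbb{P}(\Ec_{X_0})/V(W_s)}\simeq \Oc^{r-1}[1]$, which together with finite presentation gives lci. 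You instead exhibit $j$, Zariski--locally, as the derived zero locus of a section of the rank $r-1$ tautological subbundle $\Kc=\ker(p^*\Ec_X^{\vee}\rightarrow \Oc(1))$ restricted to $V(W_s)$, using the local splitting of the Euler sequence and the unipotent change of Koszul generators $(s_1,\dots,s_r)\leftrightarrow(s_1,\dots,W_s,\dots,s_r)$ on the chart $D_+(T_j)$; lci then follows formally because zero sections of vector bundles are lci and the property is stable under derived pullback and local on the source. Both proofs require a local trivialization of $\Ec_X$. Your version is more structural: it identifies the geometric mechanism behind the Orlov/Burke--Walker reduction of codimension (the residual intersection inside $V(W_s)$ is cut out by a section of $\Kc$) and in particular recovers the paper's computation of the cotangent complex as a corollary, since the derived zero locus of a section of a rank $r-1$ bundle has relative cotangent complex a shifted rank $r-1$ locally free sheaf. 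The paper's version is more economical given the tools it has already set up, as it never needs to produce the residual section or verify the two--step Koszul factorization.
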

\begin{proof}
Since the property of being lci is local, we can assume that $\Ec_X\simeq \Oc_X^r$ and $s_X=\underline{f}\in \Oc_X^r(X)$.
Since $j$ is clearly of finite presentation, we will only need to show that the relative cotangent complex is of Tor amplitude $[-1,0]$. Recall that we have the fundamental fiber-cofiber sequence
\begin{equation}
    j^*\Lb_{V(W_{\underline{f}})/\proj{n-1}{X}}\rightarrow \Lb_{\proj{r-1}{X_0}/\proj{r-1}{X}}\rightarrow \Lb_{\proj{r-1}{X_0}/V(W_{\underline{f}})}
\end{equation}
at our disposal. Since $\proj{r-1}{X_0}\simeq S\times^h_{\aff{r}{S}}\proj{r-1}{X}$, $\Lb_{\proj{r-1}{X_0}/\proj{r-1}{X}}$ is equivalent to the pullback of $\Lb_{S/\aff{r}{S}}\simeq \Oc_S^{r}[1]$ along $\proj{r-1}{X_{0}}\rightarrow S$, i.e. $\Lb_{\proj{r-1}{X_0}/\proj{r-1}{X}}\simeq \Oc_{\proj{r-1}{X_0}}^r[1]$. On the other hand, $\Lb_{V(W_{\underline{f}})/\proj{r-1}{X}}\simeq k^*\Lb_{\proj{r-1}{X}/V(\Oc(-1))}\simeq k^*(\Oc(-1)[1])$.
Therefore, $j^*\Lb_{V(W_{\underline{f}})/\proj{r-1}{X}}\simeq \Oc_{\proj{r-1}{X_0}}\otimes_{\Oc_{\proj{r-1}{X}}}\Oc(-1)$. We just need to identify the morphism $j^*\Lb_{V(W_{\underline{f}})/\proj{r-1}{X}}\simeq \Oc_{\proj{r-1}{X_0}}\otimes_{\Oc_{\proj{r-1}{X}}}\Oc(-1)[1]\rightarrow \Oc_{\proj{r-1}{X_0}}^{r}[1]\simeq \Lb_{\proj{r-1}{X_0}}$. It coincides with the morphism $(T_1,\dots,T_r)$. In particular, the cofiber of this morphism, i.e. $\Lb_{\proj{r-1}{X_0}V(W_{\underline{f}})}$, is equivalent to $\Oc_{\proj{r-1}{X_0}}^{r-1}[1]$.
\end{proof}
\begin{construction}
Since $j$ is an lci morphism, the dg functor $(\ref{jp0})$ preserves perfect complexes. Therefore, for every $(X,s_X)\in \tlgmr{S}$, we have an induced dg functor
\begin{equation}
    \Upsilon_{(X,s_X)}:=j_*p_0^*:\sing{X,s_X}\rightarrow \sing{\mathbb{P}(\Ec_X^{\vee}),W_{s_X}}
\end{equation}
Starting from $(\ref{jp0})$, by the usual standard arguments we thus obtain a lax monoidal $\oo$-natural transformation
\begin{equation}\label{reduction of codimension natural transformation}
    \Upsilon^{\otimes}:\sing{\bullet,\bullet}^{\otimes}\rightarrow \sing{\mathbb{P}(\Ec_{\bullet}^{\vee}),W_{\bullet}}^{\otimes}
\end{equation}
of functors
\begin{equation*}
\tlgmr{S}^{\textup{op},\boxplus}\rightarrow \md_{\sing{S,0}}(\dgcatm)^{\otimes},
\end{equation*}
where $\sing{\mathbb{P}(\Ec_{\bullet}^{\vee}),W_{\bullet}}^{\otimes}$ denotes the composition of ($\ref{sing projectivization}$) with
\begin{equation*}
    \md_{\sing{\mathbb{P}(\Ec_S),0}}(\dgcatm)^{\otimes}\rightarrow \md_{\sing{S,0}}(\dgcatm)^{\otimes}.
\end{equation*}
\end{construction}
\begin{thm}{\textup{(\cite{orl06}, \cite{bw15})}}\label{reduction of codimension theorem}
If $s_X$ is a regular section, the lax monoidal $\oo$-natural transformation $(\ref{reduction of codimension natural transformation})$ induces an equivalence of dg categories
\begin{equation}
   \Upsilon_{(X,s_X)}: \sing{X,s_X}\simeq \sing{\mathbb{P}(\Ec_X^{\vee}),W_{s_X}}.
\end{equation}
\end{thm}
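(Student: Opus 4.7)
The plan is to reduce to a local affine setting where one can invoke the explicit theorems of Orlov (for $r=1$) and Burke--Walker (for general $r$). Since both source and target of $\Upsilon$ satisfy Zariski descent (Lemma~\ref{zariski descent tlgmm} and its extension to vector bundles of rank $r$), and $\Upsilon^{\otimes}$ is compatible with restriction to Zariski opens by construction, we may assume $X=Spec(B)$ is affine, $\Ec_X\simeq \Oc_X^r$ is trivial, and $s_X$ corresponds to a regular sequence $\underline{f}=(f_1,\dots,f_r)$ in $B$. In this setting $\mathbb{P}(\Ec_X)=\proj{r-1}{B}$ and $W_{s_X}=\sum_i f_iT_i\in \Gamma(\proj{r-1}{B},\Oc(1))$, and the statement becomes an equivalence between singularity categories on both sides of the ``reduction of codimension.''

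In this affine situation the strict model $\cohs{B,\Oc_B^r,\underline{f}}$ presents $\cohbp{X_0}{X}$, and a strict model built out of the Zariski cover $\{D_+(T_i)\}$ presents $\cohbp{V(W_{\underline{f}})}{\proj{r-1}{B}}$. At the level of these models, $\widetilde{\Upsilon}_{(X,\underline{f})}$ has the explicit description
\begin{equation*}
(E,d,h_1,\dots,h_r)\mapsto (E[T_1,\dots,T_r],d,h_1T_1+\dots+h_rT_r),
\end{equation*}
sending a Koszul dg module to the corresponding global matrix factorization of $W_{\underline{f}}$. A direct check shows this preserves perfectness and quasi-isomorphisms, so it descends to the singularity quotients and models $\Upsilon_{(X,s_X)}$.

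To construct a quasi-inverse I would follow the Burke--Walker strategy: using Beilinson's resolution of the diagonal of $\proj{r-1}{B}$, every coherent matrix factorization of $W_{\underline{f}}$ admits a finite filtration whose subquotients are twists $\Oc(i)$ tensored with $B$-modules. Taking graded global sections and applying an appropriate truncation yields a functor $\Psi$ from matrix factorizations back to $K(B,\underline{f})$-dg modules. One then argues that the unit and counit of $(\Upsilon,\Psi)$ become equivalences modulo perfect complexes. On the generators this reduces to a cohomology computation: the classical vanishing of $R\Gamma(\proj{r-1}{B},\Oc(i))$ outside degrees $0$ and $r-1$, combined with regularity of $\underline{f}$, identifies the correction terms with perfect Koszul complexes, which die in $\sing{X,s_X}$.

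The principal obstacle is precisely showing that the counit $\Upsilon\Psi\to \mathrm{id}$ is an equivalence of dg functors into $\sing{\mathbb{P}(\Ec_X),W_{s_X}}$: the naive construction produces ``tails'' in high Beilinson-degree that must be absorbed by passage to the singularity quotient, and this absorption step is the one that genuinely requires $s_X$ to be regular (so that $X_0\simeq\pi_0(X_0)$ and $B/\underline{f}$ has finite projective dimension over $B/(f_1,\dots,f_{r-1})$ at each inductive step). For $r=1$ this is Orlov's original theorem \cite{orl06}, relating $\sing{X_0}$ to graded matrix factorizations on the total space of $\Lc_X$. For general $r$, Burke--Walker \cite{bw15} carry out an induction on $r$ by successive codimension-one reductions, which is essentially an algebraic avatar of Kn\"orrer periodicity; the monoidal compatibility with $\sing{S,0}$-actions follows a posteriori from the functoriality already built into~$\Upsilon^{\otimes}$.
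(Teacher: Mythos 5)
Your proposal and the paper part ways at the very first step: the paper does \emph{not} reprove the Orlov/Burke--Walker theorem. Its entire proof is the observation that $\sing{X,s_X}$ and $\sing{\mathbb{P}(\Ec_X),W_{s_X}}$ are (idempotent-complete) pretriangulated dg categories, so the exact dg functor $\Upsilon_{(X,s_X)}=j_*p_0^*$ is a Morita equivalence as soon as the induced functor of homotopy (triangulated) categories is an equivalence --- and that induced functor coincides by construction with the one of \cite[Theorem A.4]{bw15}, which is stated globally for a regular section of a vector bundle on a Noetherian scheme and is an equivalence there. You instead set out to give an independent proof of the cited theorem (Beilinson resolution of the diagonal, explicit quasi-inverse $\Psi$, induction on $r$ via Kn\"orrer-type reductions). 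That is a legitimate alternative route in principle, and your explicit formula $(E,d,h_1,\dots,h_r)\mapsto(E[T_1,\dots,T_r],d,\sum h_iT_i)$ does match the model of $\widetilde{\Upsilon}$ used in the paper; but as written it is a strategy outline rather than a proof, since the two hard steps --- the actual construction of $\Psi$ and the verification that the unit and counit become equivalences in the singularity quotients --- are only gestured at (``one then argues'').

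Beyond incompleteness, there is one step that genuinely needs justification and would likely fail as stated: the reduction to the affine, trivialized case. Lemma~\ref{zariski descent tlgmm} asserts Zariski descent for the \emph{categories of twisted LG models}, not for the functor $\sing{\bullet,\bullet}$. The relative singularity category is a dg quotient (a colimit), and there is no formal reason for it to commute with the limit over a Zariski cover; descent for singularity categories is a delicate point (it is exactly why Burke--Walker prove their theorem globally rather than by gluing affine patches). If you want to localize, you must either prove that $\sing{X,s_X}\to\varprojlim_i\sing{U_i,s_{U_i}}$ is an equivalence (or at least that equivalences can be detected locally for functors of this shape), or avoid the reduction altogether by citing the global statement --- which is what the paper does. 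A second, smaller point: Orlov and Burke--Walker produce \emph{triangulated} equivalences, so even on your route you eventually need the lifting principle the paper uses (an exact dg functor between pretriangulated dg categories inducing an equivalence on $H^0$ is a Morita equivalence); your proposal never addresses how the triangulated-level statement is promoted to the dg level.
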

\begin{proof}
This is an immediate consequence of \cite[Theorem A.4]{bw15}. Indeed, as the dg categories are triangulated, it suffices to show that the statement is true on the induced functor of triangulated categories. This coincides by construction with that of \textit{loc. cit.}
\end{proof}
\begin{rmk}
We actually believe that the statement above remains true even if $s_X$ is not assumed to be regular, as long as one considers the derived zero loci instead of the classical ones.
\end{rmk}

\subsection{The \texorpdfstring{$\ell$}{l}-adic realization of the dg category of singularities of a twisted LG model of rank \texorpdfstring{$r$}{r}}

It is now easy to obtain the following computation:
\begin{thm}\label{formula for (X,s)}
Let $(X,s_X)$ be a twisted LG model of rank $r$ over $(S,\Ec_S)$. Assume that $X$ is a regular scheme and that $s_X$ is a regular global section of $\Ec_X$. The following equivalence holds in $\md_{\Rlv_X(\sing{X,0})}(\shvl(X))$
\begin{equation}
    \Rlv_X(\sing{X,s_X})\simeq p_*i_*\Phi^{\textup{mi}}_{(\mathbb{P}(\Ec_X^{\vee}),W_{s_X})}(\Ql{}(\beta))[-1],
\end{equation}
where $i:V(W_{s_X})\rightarrow \mathbb{P}(\Ec_X^{\vee})$ is the closed embedding of the zero locus of $W_{s_X}$ and $p:\mathbb{P}(\Ec_X^{\vee})\rightarrow X$ is the canonical projection.
\end{thm}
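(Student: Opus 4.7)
The plan is to reduce the rank-$r$ computation to the rank-$1$ case by means of the Orlov--Burke--Walker reduction of codimension (Theorem \ref{reduction of codimension theorem}), and then apply the main theorem (\ref{main theorem}) to the rank-$1$ twisted LG model $(\mathbb{P}(\Ec_X), W_{s_X})$. The three ingredients are: the $\sing{X, 0}$-linear equivalence furnished by $\Upsilon_{(X, s_X)}$; the compatibility of $\Rlv$ with proper pushforward (a consequence of the construction of $\Mv$ and $\Rl$ through the six-functor formalism); and the formula for monodromy-invariant vanishing cycles of rank-$1$ models established in Theorem \ref{main theorem}.

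Working over the base $X$ (rather than $S$), I would view $(X, s_X)$ as an object of $\textup{LG}_{(X, \Ec_X)}$ and apply the lax monoidal $\oo$-natural transformation (\ref{reduction of codimension natural transformation}) to obtain a $\sing{X, 0}$-linear equivalence $\sing{X, s_X} \simeq \sing{\mathbb{P}(\Ec_X), W_{s_X}}$; the hypothesis that $s_X$ is a regular section is precisely what Theorem \ref{reduction of codimension theorem} demands. Applying the lax monoidal $\oo$-functor $\Rlv_X$ yields
\begin{equation*}
    \Rlv_X(\sing{X, s_X}) \simeq \Rlv_X(\sing{\mathbb{P}(\Ec_X), W_{s_X}})
\end{equation*}
in $\md_{\Rlv_X(\sing{X, 0})}(\shvl(X))$. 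Since $p : \mathbb{P}(\Ec_X) \to X$ is projective and $\sing{\mathbb{P}(\Ec_X), W_{s_X}}$ is naturally a $\mathbb{P}(\Ec_X)$-linear dg category, the base-change formula $\perf{Y'} \otimes_X T \simeq \perf{Y' \times_X \mathbb{P}(\Ec_X)} \otimes_{\mathbb{P}(\Ec_X)} T$ applied to the defining $\oo$-functor $Y' \mapsto \hk(\perf{Y'} \otimes_{(-)} T)$ of $\Mv$ gives $\Mv_X(\sing{\mathbb{P}(\Ec_X), W_{s_X}}) \simeq p_* \Mv_{\mathbb{P}(\Ec_X)}(\sing{\mathbb{P}(\Ec_X), W_{s_X}})$, and analogously for $\Rlv$. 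Because $\sing{\mathbb{P}(\Ec_X), W_{s_X}}$ is supported on $V(W_{s_X})$, so is its realization, whence $\Rlv_{\mathbb{P}(\Ec_X)}(\sing{\mathbb{P}(\Ec_X), W_{s_X}}) \simeq i_* i^* \Rlv_{\mathbb{P}(\Ec_X)}(\sing{\mathbb{P}(\Ec_X), W_{s_X}})$. Finally, applying Theorem \ref{main theorem} to the rank-$1$ twisted LG model $(\mathbb{P}(\Ec_X), W_{s_X})$ identifies $i^* \Rlv_{\mathbb{P}(\Ec_X)}(\sing{\mathbb{P}(\Ec_X), W_{s_X}})$ with $\Phi^{\textup{mi}}_{(\mathbb{P}(\Ec_X), W_{s_X})}(\Ql{}(\beta))[-1]$, producing the claimed formula.

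To invoke Theorem \ref{main theorem} one must verify its hypotheses for $(\mathbb{P}(\Ec_X), W_{s_X})$: the regularity of $\mathbb{P}(\Ec_X)$ is automatic, since $p$ is smooth with regular target; and the Tor-independence of $0$ and $W_{s_X}$ as sections of $\V(\Oc(1))$ amounts to $W_{s_X}$ being a regular section of $\Oc(1)$. This is a local check: over a trivializing open where $s_X$ is represented by a regular sequence $(f_1, \ldots, f_r)$, on the affine chart $\{T_r \neq 0\}$ of $\mathbb{P}(\Ec_X)$ the section $W_{s_X}$ becomes $f_1 t_1 + \cdots + f_{r-1} t_{r-1} + f_r \in \Oc_X[t_1, \ldots, t_{r-1}]$, which has leading coefficient $f_1$ in $t_1$ — a non-zero-divisor — and is therefore itself a non-zero-divisor. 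The main technical obstacle is the coherent bookkeeping of module structures through each step: one must check that the Orlov--Burke--Walker equivalence is $\sing{X, 0}$-linear (encoded in the lax monoidality of (\ref{reduction of codimension natural transformation}) adapted to base $X$) and that the successive identifications $\Rlv_X(\sing{\mathbb{P}(\Ec_X), W_{s_X}}) \simeq p_* \Rlv_{\mathbb{P}(\Ec_X)}(\sing{\mathbb{P}(\Ec_X), W_{s_X}}) \simeq p_* i_* i^* \Rlv_{\mathbb{P}(\Ec_X)}(\sing{\mathbb{P}(\Ec_X), W_{s_X}})$ all respect the action of $\Rlv_X(\sing{X, 0})$. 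These compatibilities follow from the symmetric-monoidal construction of $\Rlv$ and the six-functor formalism, but their careful verification is tedious and constitutes the bulk of the remaining work.
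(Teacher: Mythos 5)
Your proposal is correct and follows essentially the same route as the paper's proof: reduce to the rank-one model $(\mathbb{P}(\Ec_X),W_{s_X})$ via Theorem \ref{reduction of codimension theorem}, push forward along $p$, and apply Theorem \ref{main theorem}, with the module structure tracked through the lax monoidal structures. You are in fact somewhat more careful than the paper in explicitly verifying the hypotheses of Theorem \ref{main theorem} (regularity of $\mathbb{P}(\Ec_X)$ and the Tor-independence of $0$ and $W_{s_X}$, i.e.\ that $W_{s_X}$ is locally a non-zero-divisor) and in spelling out the support argument behind the $i_*$ appearing in the final formula.
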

\begin{proof}
As $X$ is regular and $s_X$ is a regular section, we have an equivalence $\sing{X,s_X}\simeq \sing{V(s_X)}$. Notice that in this situation $V(s_X)$ coincides with the underived zero locus of $s_X$. As $\mathbb{P}(\Ec_X^{\vee})$ is regular, by Theorem \ref{reduction of codimension theorem}, we have that 
\begin{equation*}
    \sing{X,s_X}\simeq \sing{V(W_{s_X})}\simeq \sing{\mathbb{P}(\Ec_X^{\vee}),W_{s_X}}.
\end{equation*} 
Then
\begin{equation}
    \Rlv_X(\sing{X,s_X})\simeq p_*\Rlv_{\mathbb{P}(\Ec_X^{\vee})}(\sing{\mathbb{P}(\Ec_X^{\vee}),W_{s_X}})
\end{equation}
\begin{equation*}
\underbracket{\simeq}_{\textup{Theorem } \ref{main theorem}}p_*i_*\Phi^{\textup{mi}}_{(\mathbb{P}(\Ec_X^{\vee}),W_{s_X})}(\Ql{}(\beta))[-1].
\end{equation*}

Notice that since $s_X$ is a regular global section of $\Ec_X$, $W_{s_X}$ is a regular global section of $\Oc_{\mathbb{P}(\Ec_X^{\vee})}$ and the hypothesis of Theorem \ref{main theorem} apply to the pair $(\mathbb{P}(\Ec_X^{\vee}), W_{s_X})$.

The fact that this equivalence holds in  $\md_{\Rlv_X(\sing{X,0})}(\shvl(X))$ follows immediately from the fact that \begin{equation*}
\Rlv_{\mathbb{P}(\Ec_X^{\vee})}(\sing{\mathbb{P}(\Ec_X^{\vee}),W_{s_X}})\simeq i_*\Phi^{\textup{mi}}_{(\mathbb{P}(\Ec_X^{\vee}),W_{s_X})}(\Ql{}(\beta))[-1]
\end{equation*}
is an equivalence of $\Rlv_{\mathbb{P}(\Ec_X^{\vee})}(\sing{\mathbb{P}(\Ec_X^{\vee}),0})$-modules and from the fact that 
\begin{equation}
\Rlv_X(\sing{X,0})\rightarrow p_*\Rlv_{\mathbb{P}(\Ec_X^{\vee})}(\sing{\mathbb{P}(\Ec_X^{\vee}),0})
\end{equation}
is a morphism of commutative algebras. 
\end{proof}
\begin{cor}\label{answer intial question}
Assume that $S=Spec(A)$ is a noetherian regular local ring of dimension $n$ and let $\pi_1,\dots,\pi_n$ be generators of the maximal ideal. Let $p:X\rightarrow S=Spec(A)$ be a regular, flat $S$-scheme of finite type. Let $\underline{\pi}:S\rightarrow \aff{n}{S}$ be the closed embedding associated to $\pi_1,\dots,\pi_n$. Then $\underline{\pi}\circ p$ is a regular global section of $\Oc_X^{n}$. Then the equivalence
\begin{equation*}
    \Rlv_X(\sing{X,\underline{\pi}\circ p})\simeq q_*i_*\Phi^{\textup{mi}}_{(\proj{n-1}{X},W_{\underline{\pi}\circ p})}(\Ql{}(\beta))[-1]
\end{equation*}
holds in $\md_{\Rlv_X(\sing{X,\underline{0}})}(\shvl(X))$. 

Here $q:\proj{n-1}{X}=Proj_X(\Oc_X[T_1,\dots,T_n])\rightarrow X$ is the canonical projection and $i:V(W_{\underline{\pi}\circ p})\rightarrow \proj{n-1}{X}$ is the closed embedding determined by the equation 
\begin{equation*}
    W_{\underline{\pi}\circ p}=p^*(\pi_1)\cdot T_1+\dots + p^*(\pi_n)\cdot T_n=0.
\end{equation*}
\end{cor}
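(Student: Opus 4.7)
The plan is to recognize this corollary as the direct specialization of Theorem \ref{formula for (X,s)} to the trivial rank $n$ vector bundle $\Ec_S = \Oc_S^n$ on $S$, with the section $\underline{\pi} \circ p$ obtained by pulling back $\underline{\pi} : S \to \aff{n}{S} = \V(\Oc_S^n)$ along $p$. Under the identification $\mathbb{P}(\Oc_X^n) \simeq \proj{n-1}{X}$, all the geometric data appearing on the right hand side of Theorem \ref{formula for (X,s)} coincide with those in the statement of the corollary, so the task reduces to verifying the two hypotheses of that theorem: that $X$ is regular and that $\underline{\pi} \circ p$ is a regular global section of $\Oc_X^n$.

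The regularity of $X$ is part of the hypotheses, so only the regularity of the section $\underline{\pi} \circ p$ needs to be checked. I would argue as follows. Since $A$ is a regular local ring of dimension $n$ and $\pi_1, \dots, \pi_n$ generate its maximal ideal, they form a minimal system of generators, hence a regular sequence in $A$ (this is the standard characterization of regular local rings via a regular system of parameters). By the flatness of $p : X \to S$, the structure sheaf $\Oc_X$ is a flat $\Oc_S$-algebra, and since flat base change preserves regular sequences, the pullbacks $(\pi_1 \circ p, \dots, \pi_n \circ p)$ form a regular sequence in $\Oc_X$. Equivalently, $\underline{\pi} \circ p$ is a regular section of $\Oc_X^n$, its zero locus having the expected codimension $n$.

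With both hypotheses in hand, Theorem \ref{formula for (X,s)} applied to the pair $(X, \underline{\pi} \circ p)$, viewed as a twisted LG model of rank $n$ over $(S, \Oc_S^n)$, immediately yields the desired equivalence
\begin{equation*}
    \Rlv_X(\sing{X, \underline{\pi} \circ p}) \simeq q_* i_* \Phi^{\textup{mi}}_{(\proj{n-1}{X}, W_{\underline{\pi} \circ p})}(\Ql{}(\beta))[-1],
\end{equation*}
the $\Rlv_X(\sing{X, \underline{0}})$-module structure on the right hand side being induced by the base change map as in the theorem. There is no substantial obstacle here: the entire content of the corollary is in recognizing the correct special case of Theorem \ref{formula for (X,s)} and checking the regularity of the pullback of a regular system of parameters, which is a standard piece of commutative algebra.
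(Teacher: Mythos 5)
Your proposal is correct and matches the paper's intended argument: the paper gives no separate proof of this corollary, presenting it as the immediate specialization of Theorem \ref{formula for (X,s)} to $\Ec_S=\Oc_S^n$, which is precisely what you do. Your verification that $\underline{\pi}\circ p$ is a regular section (regular system of parameters in a regular local ring, preserved under flat base change via the Koszul complex) is the one nontrivial check, and it is carried out correctly.
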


\section{Towards a vanishing cycles formalism over \texorpdfstring{$\ag{S}$}{ag}}
The discussion in Section \ref{Monodromy-invariant vanishing cycles} suggests that it should be possible to construct a vanishing cycles formalism where the role of the base henselian trait is played by some more general geometric object. Moreover, the monodromy-invariant part of this construction should recover the sheaf of Definition \ref{Phimi}.

Notice that similar ideas, i.e. to develop a formalism of vanishing cycles over a base scheme $B$ with a closed-open decomposition $(\sigma,\eta)$, already appeared in J.~Ayoub's works \cite{ay08i,ay08ii,ay14}.

We will give a complete account on the formalism of tame vanishing cycles over $\ag{S}$ in a forthcoming paper in collaboration with D.-C.~Cisinski.

\subsection{Tame vanishing cycles over \texorpdfstring{$\aff{1}{S}$}{A1}}
It is well known that even if tame vanishing cycles where first defined for schemes over an henselian trait, what one actually uses is the geometry of $\aff{1}{S}$. Indeed, one can consider the following diagram
\begin{equation}
      \begindc{\commdiag}[18]
      \obj(-40,0)[1]{$S$}
      \obj(0,0)[2]{$\aff{1}{S}=Spec_S(\Oc_S[t])$}
      \obj(40,0)[3]{$\gm{S}$}
      \obj(100,0)[4]{$\varprojlim_{n\in \gm{S}(S)} \gm{S}[x]/(x^{n}-t)$}
      \mor{1}{2}{$i_0$}[\atleft,\injectionarrow]
      \mor{3}{2}{$j_0$}[\atright,\injectionarrow]
      \mor{4}{3}{$$}
      \enddc
\end{equation}

and use it to replace the one considered in \cite{sga7i,sga7ii} when $S=Spec(A)$ is a strictly henselian trait
\begin{equation}
    \begindc{\commdiag}[15]
    \obj(0,0)[1]{$\sigma$}
    \obj(30,0)[2]{$S$}
    \obj(60,0)[3]{$\eta$}
    %\obj(0,-5)[4]{$\sigma$}
    %\obj(30,-5)[5]{$S$}
    %\obj(60,-5)[6]{$\eta$}
    \obj(90,0)[7]{$\eta^t.$}
    %\obj(120,15)[8]{$\bar{\eta}$}
    \mor{1}{2}{$$}[\atright,\injectionarrow]
    \mor{3}{2}{$$}[\atleft,\injectionarrow]
    %\mor{4}{5}{$i$}[\atright,\injectionarrow]
    %\mor{6}{5}{$j$}[\atright,\injectionarrow]
    %\mor{1}{4}{$v_{\sigma}$}
    %\mor{2}{5}{$v$}
    %\mor{3}{6}{$v_{\eta}$}
    \mor{7}{3}{$$}
    %\mor{8}{7}{$$}
    \cmor((90,4)(89,6)(87,7)(60,8)(33,7)(31,6)(30,4)) \pdown(60,14){$$}
    %\cmor((118,17)(104,24)(90,27)(74,28)(58,27)(44,24)(30,18)) \pleft(75,32){$\bar{j}$}
    \enddc
\end{equation}

If we pullback the first diagram along the morphism $S\rightarrow \aff{1}{S}$ given by an uniformizer, we recover the second one. It is not surprising that in this way we are only able to recover the so called tame vanishing cycles. Indeed, the wild inertia group is of arithmetic nature.
One can define nearby cycles for schemes over $\aff{1}{S}$ in the usual way: for $f:X\rightarrow \aff{1}{S}$ consider the diagram
\begin{equation}
    \begindc{\commdiag}[18]
    \obj(-30,10)[1]{$X_0$}
    \obj(0,10)[2]{$X$}
    \obj(60,10)[3]{$X_{\mathcal{U}_n}$}
    \obj(130,10)[4]{$X_{\mathcal{U}_{\oo}}$}
    \obj(-30,-10)[5]{$S$}
    \obj(0,-10)[6]{$\aff{1}{S}$}
    \obj(60,-10)[7]{$\gm{S}[x]/(x^n-t)=\mathcal{U}_n$}
    \obj(130,-10)[8]{$\mathcal{U}_{\oo}=\varprojlim_{n\in \Oc_S^{\times}} \mathcal{U}_n.$}
    \mor{1}{2}{$i$}
    \mor{3}{2}{$j_n$}[\atright,\solidarrow]
    \mor{4}{3}{$\phi_n$}[\atright,\solidarrow]
    \mor{1}{5}{$f_0$}[\atright,\solidarrow]
    \mor{2}{6}{$f$}[\atright,\solidarrow]
    \mor{3}{7}{$f_n$}
    \mor{4}{8}{$f_{\oo}$}
    \mor{5}{6}{$i_0$}[\atright,\solidarrow]
    \mor{7}{6}{$j_{0,n}$}
    \mor{8}{7}{$\phi_{0,n}$}
    \cmor((130,13)(130,17)(127,20)(75,20)(3,20)(0,17)(0,13)) \pdown(75,25){$j_{\oo}$}
    \cmor((130,-13)(130,-17)(127,-20)(75,-20)(3,-20)(0,-17)(0,-13)) \pup(75,-25){$j_{0,\oo}$}
    \enddc
\end{equation}
\begin{defn}
Let $\Fc \in \shvl(X)$. 
\begin{itemize}
    \item The $\ell$-adic sheaf of nearby cycles of $\Fc$ is defined as
\begin{equation}
    \Psi_f(\Fc):=i^*j_{\oo*}j_{\oo}^*\Fc \simeq \varinjlim_{n\in \Oc_S^{\times}} i^*j_{n*}j_n^*\Fc.
\end{equation}
Notice that, since $\mu_{n,S}:=\Oc_S[x]/(x^n-1)$ naturally acts on $X_{\mathcal{U}_n}$, then $i^*j_{n*}j_n^*\Fc$ has a natural induced action. Then $\Psi_f(\Fc)$ has a natural action of $\varprojlim_{n\in \Oc_S^{\times}}\mu_{n,S}=:\mu_{\oo,S}$.
\item There is a natural morphism $i^*\Fc \rightarrow \Psi_f(\Fc)$, that we can see as a $\mu_{\oo,S}$-equivariant morphism if we endow $i^*\Fc$ with the trivial action. The sheaf of vanishing cycles $\Phi_f(\Fc)$ of $\Fc$ is then defined as the cofiber of this morphism, with the induced $\mu_{\oo,S}$-action.
\end{itemize}
\end{defn}
\subsection{Tame vanishing cycles over \texorpdfstring{$\ag{S}$}{ag}}
We can reproduce the situation described above for schemes over $\ag{S}$. In this case, the role of the zero section is played by $\bgm{S} \rightarrow \ag{S}$, and that of the open complementary by $S\simeq \gm{S}/\gm{S}\rightarrow \ag{S}$. We can consider elevation to the $n^{th}$ power in this case too:
\begin{equation}
    \Theta_n: \ag{S} \rightarrow \ag{S}
\end{equation}
which can be described as the functor
\begin{equation}
    (p:X\rightarrow S, \Lc_X, s_X)\mapsto (p:X\rightarrow S, \Lc_X^{\otimes n},s_X^{\otimes n}).
\end{equation}
See \cite{c07} and \cite{agv08}.
\begin{rmk}
Notice that the $\Theta_n$ are compatible, i.e.
\begin{equation}\label{compatibility Theta_n}
    \Theta_n\circ \Theta_m\simeq \Theta_{mn} \simeq \Theta_m\circ \Theta_n
\end{equation}
for any $n,m\in \N$.
\end{rmk}
We can therefore consider the diagrams (for each $n\in \Oc_S^{\times}$)
\begin{equation}\label{base vanishing cycles ag}
    \begindc{\commdiag}[18]
    \obj(-70,-10)[1]{$\bgm{S}$}
    \obj(-20,-10)[2]{$\ag{S}$}
    \obj(20,-10)[3]{$S.$}
    \obj(-70,10)[4]{$\bgm{S} \times_{\ag{S}}\ag{S}$}
    \obj(-20,10)[5]{$\ag{S}$}
    \obj(20,10)[6]{$S$}
    \obj(-140,10)[7]{$\bgm{S}$}
    \mor{1}{2}{$i_0$}[\atright,\solidarrow]
    \mor{3}{2}{$j_0$}
    \mor{4}{1}{$$}
    \mor{5}{2}{$\Theta_n$}
    \mor{6}{3}{$id$}
    \mor{4}{5}{$$}
    \mor{6}{5}{$j_{0,n}$}[\atright,\solidarrow]
    \mor{7}{4}{$t_{0,n}$}
    \mor{7}{1}{$\Theta_{0,n}$}[\atright,\solidarrow]
    \cmor((-140,14)(-140,18)(-138,20)(-70,20)(-22,20)(-20,18)(-20,14)) \pdown(-70,25){$i_{0,n}$}
    \enddc
\end{equation}
Notice that even if $t_{0,n}:\bgm{S} \rightarrow \bgm{S}\times_{\ag{S}}\ag{S}$ is not an equivalence, it shows $\bgm{S}\times_{\ag{S}}\ag{S}$ as a nilpotent thickening of $\bgm{S}$. More explicitly, objects of $\bgm{S}$ are pairs $(p:X\rightarrow S,\Lc_X)$, while those of $\bgm{S} \times_{\ag{S}}\ag{S}$ are triplets $(p:X\rightarrow S,\Lc_X,s_X)$, where $s_X$ is a $n$-torsion global section of $\Lc_X$. What is important for us is that pullback along $t_{0,n}$ induces an equivalence in \'etale cohomology. Moreover, $\Theta_n$ is a finite morphism of stacks. In particular, the base change theorem should be valid for the cartesian squares above. However, even if the $6$-functors formalism for the \'etale cohomology of stacks has been developed (\cite{lz15}, \cite{lz17i}, \cite{lz17ii}), the proper base change theorem has been proved in the case of representable morphisms. The morphisms $\Theta_n$ are not representable. For example, the pullback of $\Theta_n$ along the canonical atlas $\aff{1}{n}\rightarrow \ag{S}$ is $\aff{1}{S}/\mu_n$.

For any $X$-point $(p:X\rightarrow S, \Lc_X,s_X)$ of $\ag{S}$ (e.g. for any twisted LG model over $S$), we can consider the following diagram, cartesian over (\ref{base vanishing cycles ag})
\begin{equation}
    \begindc{\commdiag}[18]
    \obj(-70,-10)[1]{$X_0$}
    \obj(-20,-10)[2]{$X$}
    \obj(30,-10)[3]{$X_{\mathcal{U}}.$}
    \obj(-70,10)[4]{$X_0 \times_{X}X_n$}
    \obj(-20,10)[5]{$X_n$}
    \obj(30,10)[6]{$X_{\mathcal{U}}$}
    \obj(-140,10)[7]{$X_{0,n}$}
    \mor{1}{2}{$i$}[\atright,\solidarrow]
    \mor{3}{2}{$j$}
    \mor{4}{1}{$$}
    \mor{5}{2}{$\Omega_n$}
    \mor{6}{3}{$id$}
    \mor{4}{5}{$$}
    \mor{6}{5}{$j_{n}$}[\atright,\solidarrow]
    \mor{7}{4}{$t_n$}
    \mor{7}{1}{$\Omega_{0,n}$}[\atright,\solidarrow]
    \cmor((-140,14)(-140,18)(-138,20)(-70,20)(-22,20)(-20,18)(-20,14)) \pdown(-70,25){$i_{n}$}
    \enddc
\end{equation}
The same observations we made for the base diagram remain valid in this case too.
\begin{notation}
For a stack $Y$, and a torsion abelian group $\Lambda$ (we will only be interested in the case $\Lambda=\Z/\ell^n\Z$, $\ell \in \Oc_S^{\times}$) let $\shv(Y,\Lambda)$ the $\oo$-category of $\md_{\Lambda}$-valued \'etale sheaves on $Y$.
\end{notation}
We can then propose the following definition.
\begin{defn}
Let $\Fc \in \shv(X,\Lambda)$. Then
\begin{equation}
    \Psi_{n}(\Fc):=i_n^*j_{n*}j^*\Fc\simeq i_n^*j_{n*}j_n^*\Omega_n^*\Fc \in \shv(X_{0,n},\Lambda).
\end{equation}
Moreover, let 
\begin{equation}
    sp_n:i_n^*\Omega_n^*\Fc\rightarrow \Psi_n(\Fc)
\end{equation}
be the morphism induced by the unit of $(j_n^*,j_{n*})$.
\end{defn}
We can then consider the images of these morphisms in $\shv(X_0,\Lambda)$
\begin{equation}
    \Omega_{0,n*}(sp_n:i_n^*\Omega_n^*\Fc\rightarrow \Psi_n(\Fc)).
\end{equation}
Consider the diagram
\begin{equation}
    \begindc{\commdiag}[18]
    \obj(-50,30)[1]{$X_{0,nm}$}
    \obj(0,30)[2]{$X_0\times_X X_{nm}$}
    \obj(50,30)[3]{$X_{nm}$}
    \obj(85,30)[4]{$X_{\mathcal{U}}$}
    \obj(-50,0)[5]{$X_{0,n}$}
    \obj(0,0)[6]{$X_0\times_X X_{n}$}
    \obj(50,0)[7]{$X_{n}$}
    \obj(85,0)[8]{$X_{\mathcal{U}}$}
    \obj(0,-30)[9]{$X_{0}$}
    \obj(50,-30)[10]{$X$}
    \obj(85,-30)[11]{$X_{\mathcal{U}}.$}
    \mor{1}{2}{$$}
    \mor{2}{3}{$$}
    \mor{4}{3}{$j_{nm}$}[\atright,\solidarrow]
    \mor{5}{6}{$$}
    \mor{6}{7}{$$}
    \mor{8}{7}{$j_n$}
    \mor{9}{10}{$i_0$}
    \mor{11}{10}{$j_0$}
    \mor{1}{5}{$\Omega_{0,m}^n$}[\atright,\solidarrow]
    \mor{2}{6}{$$}
    \mor{3}{7}{$\Omega_m^n$}
    \mor{4}{8}{$id$}
    \mor{5}{9}{$\Omega_{0,n}$}[\atright,\solidarrow]
    \mor{6}{9}{$$}
    \mor{7}{10}{$\Omega_n$}
    \mor{8}{11}{$id$}
    \cmor((-50,33)(-50,37)(-47,40)(0,40)(47,40)(50,37)(50,33)) \pdown(0,45){$i_{nm}$}
    \cmor((-60,30)(-70,30)(-75,25)(-75,0)(-70,-15)(-45,-30)(-7,-30)) \pright(-85,0){$\Omega_{0,nm}$}
    \cmor((-45,5)(-43,10)(-40,11)(0,11)(40,11)(43,10)(45,5)) \pdown(-10,15){$i_n$}
    \cmor((55,25)(60,24)(65,21)(66,0)(65,-21)(60,-24)(55,-25)) \pleft(73,10){$\Omega_{nm}$}
    \enddc
\end{equation}
Then we have
\begin{equation}
    \begindc{\commdiag}[18]
    \obj(-50,15)[1]{$sp_n:i_n^*\Omega_n^*\Fc$}
    \obj(50,15)[2]{$i_n^*j_{n*}j_n^*\Omega_n^*\Fc$}
    \obj(-60,-15)[3]{$\Omega_{0,m*}^n(sp_{nm}):\Omega_{0,m*}^ni_{nm}^*\Omega_{nm}^*\Fc$}
    \obj(50,-15)[4]{$\Omega_{0,m*}^ni_{nm}^*j_{nm*}j_{nm}^*\Omega_{nm}^*\Fc$}
    \mor{1}{2}{$$}
    \mor(-40,14)(-40,-14){$1\rightarrow \Omega_{m*}^n\Omega_m^{n*}$}
    \mor{2}{4}{$\simeq$}
    \mor{3}{4}{$$}
    \obj(-40,-25)[5]{$\simeq i_n^*\Omega_{m*}^n\Omega_m^{n*}\Omega_n^*\Fc$}
    \obj(50,-25)[6]{$\simeq i_n^*j_{n*}j_n^*\Omega_n^*\Fc.$}
    \enddc
\end{equation}
We can then consider 
\begin{equation}
    \varinjlim_{n\in \N^{\times}}\Omega_{0,n*}i_n^*\Omega_n^*\Fc\rightarrow i_0^*j_{0*}j_0^*\Fc \in \shv(X_0,\Lambda).
\end{equation}
For $\Lambda=\Z/\ell^d\Z$ one can then consider the induced morphism on $\ell$-adic sheaves obtained by taking the limit over $d$ and then tensor with $\Ql{}$. It is expected that in this way one is able to recover monodromy-invariant vanishing cycles. In order to explain way the first Chern class of the line bundle appears in the computation, it might be useful to look at the base diagram. The $\ell$-adic cohomology of $\bgm{S}$ is $\Ql{}[c_1]$, where $c_1$ is the universal first Chern class and lies in degree $2$. We expect that 
\begin{equation}
 \Ql{}\otimes_{\Z_{\ell}}\bigl (\varprojlim_{d}   \varinjlim_{n\in \N^{\times}}\Theta_{0,n*}i_n^*\Theta_n^* \Z/\ell^d\Z{}(\beta)\bigr )\simeq cofib(\Ql{}(\beta)\xrightarrow{c_1} \Ql{}(\beta))
\end{equation}
and one recovers $cofib(\Ql{,X_0}(\beta)\xrightarrow{c_1(\Lc_{X_0})}\Ql{,X_0}(\beta))$
by a formula of the kind
\begin{equation}
    \Ql{}[c_1]\simeq \textup{H}^*(\bgm{S},\Ql{})\rightarrow \textup{H}^*(X_0,\Ql{})
\end{equation}
\begin{equation*}
    c_1\mapsto c_1(\Lc_{X_0}),
\end{equation*}
where $\Lc_{X_0}$ is the line bundle which determines the morphism $X_0\rightarrow \bgm{S}$.

\section{Some remarks on the regularity hypothesis}\label{about the regularity hypothesis}
In the theorems about the $\ell$-adic realizaiton of the dg category of a (twisted, $n$-dimensional) LG model the regularity assumption on the ambient scheme is crucial. Indeed, we are not able to compute the motivic realization (and hence, the $\ell$-adic one) of $\cohbp{X_0}{X}$. However, this dg category sits in the following pullback diagram
\begin{equation}\label{pullback cohbp}
    \begindc{\commdiag}[18]
    \obj(-30,15)[1]{$\cohbp{X_0}{X}$}
    \obj(30,15)[2]{$\perf{X}_{X_0}$}
    \obj(-30,-15)[3]{$\cohb{X_0}$}
    \obj(30,-15)[4]{$\cohb{X}_{X_0}.$}
    \mor{1}{2}{$\mathfrak{i}_*$}
    \mor{1}{3}{$$}[\atright, \injectionarrow]
    \mor{2}{4}{$$}[\atright, \injectionarrow]
    \mor{3}{4}{$\mathfrak{i}_*$}
    \enddc
\end{equation}
It is well known after Quillen's d\'evissage for G-theory that 
\begin{equation}
    \Mv(\cohb{X_0})\simeq \Mv(\cohb{X}_{X_0}).
\end{equation}
Therefore, to say that $\Mv(\mathfrak{i}_*:\cohbp{X_0}{X}\rightarrow \perf{X}_{X_0})$ is an equivalence or to say that the image of square (\ref{pullback cohbp}) via $\Mv$ is still a pullback square are equivalent statements. We believe that this is the case, even though this matter will be investigated elsewhere. Notice that to know that such an equivalence holds true would allow to compute the motivic realization of $\cohbp{X_0}{X}$ (under the additional hypothesis $X_0\simeq \pi_0(X_0)$). Indeed, the localization sequence for K-theory would allow us to compute it as the fiber
\begin{equation}
\begindc{\commdiag}[18]
    \obj(0,20)[1]{$\Mv_X(\cohbp{X_0}{X})\simeq \Mv_X(\perf{X}_{X_0})$}
    \obj(0,0)[2]{$\Mv_X(\perf{X})$}
    \obj(0,-20)[3]{$\Mv_X(\perf{X-X_0}).$}
    \mor{1}{2}{$$}
    \mor{2}{3}{$$}
\enddc
\end{equation}
In other words,
\begin{equation}
    \Mv_X(\cohbp{X_0}{X})\simeq i_*i^!\bu_X.
\end{equation}
In this case, the proof of Theorem \ref{main theorem} would work without any changes.

\begin{center}
\textit{Acknowledgements}
\end{center}
This paper is part of the author's PhD thesis written under the supervision of B.~To\"en and G.~Vezzosi, and to them owes a lot. Their generosity in sharing their ideas and insights is the backbone upon which this work relies on.

The author wishes to thank F.~D\'eglise, T.~Dyckerhoff, B.~Keller, S.~Scherotzke and J.~Tapia for their comments and remarks on the work presented in this article.

Many thanks to D.~Beraldo, T.~Moulinos and M.~Robalo for many useful conversations.

The author thanks an anonymous referee for many useful remarks and comments.

This project has received funding from the European Research Council (ERC) under the European Union Horizon 2020 research and innovation programme (grant agreement NEDAG ADG-741501).

This project has received funding from the European Research Council (ERC) under the European Union Horizon 2020 research and innovation programme (grant agreement No.725010). 

More precisely, this work was carried out while the author was supported by the NEDAG PhD grant ERC-2016-ADG-741501 as a "doctorant contractuel employ\'e par le CNRS \`a l'unit\'e UMR5219 IMT", and partially written while he was supported by the BG-BB-AS ERC-2016-COG-725010 as a Reseach Fellow in the UCL Department of Mathematics in the service of University College London 'UCL'.

m.pippi@ucl.ac.uk

Department of Mathematics, University College London, Gordon Street 25, London WC1H 0AY, United Kingdom
\end{document}